\theoremstyle{plain}
\newtheorem*{main}{Main Theorem}
\newtheorem{thm}{Theorem}[section]
\newtheorem{lem}[thm]{Lemma}
\newtheorem{pro}[thm]{Proposition}
\theoremstyle{definition}
\newtheorem{dfn}[thm]{Definition}
\newtheorem{rmk}[thm]{Remark}
\newcommand{\F}{\mathbb{F}}
\newcommand{\Z}{\mathbb{Z}}
\newcommand{\N}{\mathbb{N}}
\newcommand{\C}{\mathbb{C}}
\renewcommand{\P}{\mathbb{P}}
\DeclareMathOperator{\rank}{rank}
\DeclareMathOperator{\bea}{b}
\DeclareMathOperator{\bq}{bq}
\DeclareMathOperator{\nbea}{nb}
\begin{document}

\title[On the number of Beauville and non-Beauville $p$-groups]{On the asymptotic behaviour of the number of Beauville and non-Beauville $p$-groups}

\author[G.A.\ Fern\'andez-Alcober]{Gustavo A.\ Fern\'andez-Alcober}
\address{Department of Mathematics\\ University of the Basque Country UPV/EHU\\
48080 Bilbao, Spain}
\email{gustavo.fernandez@ehu.eus}

\author[\c{S}.\ G\"ul]{\c{S}\"ukran G\"ul}
\address{Department of Mathematics\\ University of the Basque Country UPV/EHU\\
48080 Bilbao, Spain}
\email{sukran.gul@ehu.eus}

\author[M.\ Vannacci]{Matteo Vannacci}
\address{Mathematisches Institut der Heinrich-Heine-Universit\"at, Universit\"atsstr.~1, 40225 D\"usseldorf, Germany}
\email{vannacci@uni-duesseldorf.de}

\keywords{Beauville groups; finite $p$-groups\vspace{3pt}}

\subjclass[2010]{Primary 20D15. Secondary 14J29}

\thanks{The first two authors are supported by the Spanish Government, grants
MTM2014-53810-C2-2-P and MTM2017-86802-P, and by the Basque Government, grant IT974-16.
The third author acknowledges support from the research training group
\textit{GRK 2240: Algebro-Geometric Methods in Algebra, Arithmetic and Topology}, funded by the DFG}

\begin{abstract}
We find asymptotic lower bounds for the numbers of both Beauville and non-Beauville $2$-generator finite $p$-groups of a fixed order, which turn out to coincide with the best known asymptotic lower bound for the total number of $2$-generator finite $p$-groups of the same order.
This shows that both Beauville and non-Beauville groups are abundant within the family of finite $p$-groups.
\end{abstract}

\maketitle

\section{Introduction}

A \emph{Beauville surface\/} (of unmixed type) is a compact complex surface isomorphic to a quotient
$(C_1\times C_2)/G$ satisfying the following conditions:
\begin{enumerate}
\item
$C_1$ and $C_2$ are algebraic curves of genera at least $2$.
\item
$G$ is a finite group acting freely on $C_1\times C_2$ by holomorphic transformations.
\item
For both $i=1,2$, we have $C_i/G\cong \P_1(\C)$ and the covering map $C_i\rightarrow C_i/G$ is ramified over three points.
\end{enumerate}
Then the group $G$ is said to be a \emph{Beauville group\/}.

The geometric definition of Beauville groups can be transformed into purely group-theoretical terms.
Given a group $G$, a tuple $T=(x,y,z)$ is called a \emph{spherical triple of generators\/} if $G=\langle x,y,z \rangle$ and $xyz=1$.
Then $G$ is a Beauville group if and only if it admits two spherical triples of generators, $T_1$ and $T_2$, such that
\begin{equation}
\label{beauville condition}
\langle t_1 \rangle^g \cap \langle t_2 \rangle = 1,
\quad
\text{for all $t_1\in T_1$, $t_2\in T_2$, and $g\in G$.}
\end{equation}
(With a slight abuse of notation, we use the same symbol for the tuple and the set of its components, and we write the $\in$ symbol to denote that an element appears in the tuple.)
We then say that $(T_1,T_2)$ is a \emph{Beauville structure\/} of $G$.
Thus Beauville groups are in particular $2$-generator finite groups.

At this point it is natural to ask which $2$-generator finite groups are Beauville groups.
In 2000, Catanese \cite{cat} proved that a finite abelian group is a Beauville group if and only if it is isomorphic to $C_n\times C_n$, where $n>1$ and $\gcd(n,6)=1$.
On the other hand, all finite simple groups other than $A_5$ are Beauville groups, as shown by Guralnick and Malle in 2012 \cite{GM}.
Fairbairn, Magaard and Parker \cite{FMP,FMPc} proved more generally that all finite quasisimple groups are Beauville groups, with the only exceptions of $A_5$ and $SL_2(5)$.

The determination of nilpotent Beauville groups is easily reduced to the case of $p$-groups.
In \cite{FG}, the first two authors found a criterion for a finite $p$-group with a `nice power structure' to be a Beauville group that extends Catanese's condition for abelian groups.
More precisely, if $G$ is a $2$-generator finite $p$-group of exponent $p^e$ that satisfies the condition
\begin{equation}
\label{nice power structure}
x^{p^{e-1}}=y^{p^{e-1}} \quad \Longleftrightarrow \quad (xy^{-1})^{p^{e-1}}=1,
\end{equation}
then $G$ is a Beauville group if and only if $p\ge 5$ and $|G^{p^{e-1}}|\ge p^2$.
Note that condition \eqref{nice power structure} holds for usual families of $p$-groups such as powerful groups, $p$-central groups, and regular $p$-groups.
In particular, it holds for $p$-groups of nilpotency class less than $p$.
On the other hand, examples are given in \cite{FG} showing that the above criterion is not generally valid without assumption \eqref{nice power structure}, and it may be very hard to decide whether a given finite $p$-group has a Beauville structure.

Failing to find a general recipe to tell apart Beauville $p$-groups from non-Beauville ones, we can reformulate our goal and try to determine the asymptotic behaviour of the number of Beauville groups of order $p^t$ as $t$ tends to infinity.
In this respect, observe that Jaikin-Zapirain \cite{J} provided asymptotic bounds for the number
$f_d(p^t)$ of $d$-generator finite $p$-groups of order $p^t$, which reduce to the following in the case $d=2$:
\begin{equation}
\label{jaikin bounds}
p^{\frac{1}{4}t^2+o(t^2)} \le f_2(p^t) \le p^{\frac{1}{2}t^2+o(t^2)}.
\end{equation}
In \cite{FGJ}, the authors comment that it is very plausible that most $2$-generator finite $p$-groups of sufficiently large order are Beauville groups.
In this paper, we shed some light on this problem, by showing that there are an abundance of
\emph{both\/} Beauville and non-Beauville groups among $2$-generator finite $p$-groups.
More precisely, if $\mathrm{b}(p^t)$ and $\mathrm{nb}(p^t)$ denote the number of Beauville and non-Beauville $2$-generator $p$-groups of order $p^t$, we show that these numbers are asymptotically at least as big as the best known lower bound for the total number of $2$-generator groups of order $p^t$, provided that $p\ge 5$.

\begin{main}
Let $p\ge 5$ be a prime.
Then
\[
\mathrm{b}(p^t) \ge p^{\frac{1}{4}t^2+o(t^2)}
\qquad
\text{and}
\qquad
\mathrm{nb}(p^t) \ge p^{\frac{1}{4}t^2+o(t^2)}.
\]
\end{main}

The proof of this theorem follows ideas used by Jaikin-Zapirain in order to obtain the lower bound for
$f_2(p^t)$ in \eqref{jaikin bounds}.
Roughly speaking, the approach is to consider the free group $F$ on $2$ generators and a normal subgroup $H$ of $p$-power index in $F$.
Then one refines the lower $p$-central series of $H$ to a special type of normal series of $F$ with all sections of order $p$, which we call \emph{standard series\/}.
Now for every $S$ in a standard series, let us consider all quotients $F/W$ with $S^p[S,F]\le W\le S$.
We refer to these factor groups as \emph{standard $2$-generator $p$-groups\/}.
Then Jaikin-Zapirain's argument is to observe that, for a given $n\in\N$, by suitably choosing $S$ and $W$, one can find as many non-isomorphic standard groups as shown in the lower bound of
\eqref{jaikin bounds}.

In counting Beauville and non-Beauville $2$-generator $p$-groups, again we use standard $p$-groups, and we need to perform a very careful analysis of the power-commutator structure of such groups.
This is the goal of Section 2, where we work with the lower $p$-central series of a general group $G$, not necessarily free.
We also introduce standard series and another refinement of the lower $p$-central series, that we will write as $\{Q_{n,k}(G)\}$, and that will play a significant role in the determination of the existence of Beauville structures.
In Section 3, we consider the above mentioned situation of a free group $F$ on $2$ generators and a normal subgroup $H$ of $p$-power index, and we are able to give a criterion to determine whether the quotients of $F$ by a quite general family of normal subgroups are Beauville groups.
This applies in particular to all standard groups arising from a subgroup $S$ that is not lying strictly between $P_{n+1}(H)$ and $Q_{n,1}(H)$.
This criterion relies heavily on the theory developed in Section 2, but it also requires some other results that are specific to free groups.
The restriction on $S$ only means a bounded number of exceptions for every $n$, and it allows us to adjust Jaikin-Zapirain's argument in order to count Beauville and non-Beauville groups, which we do in Section 4.

The restriction to $p\ge 5$ in our main theorem does not come as a surprise.
One of the cornerstones in our method for finding Beauville structures in Section 3 is the fact that the lower $p$-central quotients $F/P_n(H)$ are always Beauville groups for $p\ge 5$.
However, as the second author showed in \cite{gul}, $F/P_n(F)$ is never a Beauville group if $p=2$ or $3$, a result that can be extended with a similar argument to the quotients $F/P_n(H)$.
Also, in general, the construction of Beauville $p$-groups is much more difficult when $p=2$ or $3$ than for $p\ge 5$.
Actually, for some time it was not known whether there existed any Beauville $2$-groups or $3$-groups, and the first examples were given in \cite[Section 5]{FGJ}.
It is certainly an interesting problem to try to find a lower bound for $\mathrm{b}(p^t)$ when $p=2$ or $3$, but it will require a radically different approach.

\section{Power-commutator calculus and refinements of the lower $p$-central series}

Let $G$ be a group and let $p$ be a prime number.
The series
\[
G=P_1(G) \geq P_2(G)\geq \dots \geq P_n(G)\geq \dots
\]
given by $P_n(G)=[P_{n-1}(G),G]P_{n-1}(G)^p$ for $n>1$ is called the \emph{lower $p$-central series\/} of $G$.
By Definition 1.4 and Theorem 1.5 in \cite{HB}, we have
\[
P_n(G)= \gamma_1(G)^{p^{n-1}}\gamma_2(G)^{p^{n-2}}\dots \gamma_n(G).
\]
We refer to each quotient group $G/P_n(G)$ as a \emph{lower $p$-central quotient\/} of $G$.
Observe that, if $G$ is finitely generated, then $G/P_n(G)$ is a finite $p$-group of exponent less than $p^n$ and class less than $n$.
We define a function $s_G\colon\N\rightarrow\N$ by means of
\[
|P_n(G):P_{n+1}(G)| = p^{s_G(n)}.
\]
If $F$ is the free group on $d$ generators, we write $s_d(n)$ for $s_F(n)$.
By Lemma 3.1 of \cite{J}, we can write
\[
s_d(n) = \sum_{i=1}^n \, M_d(i),
\]
with
\[
M_d(n) = \frac{1}{n} \sum_{k\mid n} \, \mu(k) d^{n/k},
\]
where $\mu$ is the M\"obius function.
Also, one can easily check that $s_d(n)\le (d+1)s_d(n-1)$, which in turn yields that
\[
s_d(n) \le d \sum_{i=1}^{n-1} \, s_d(i),
\]
or what is the same, that
\begin{equation}
\label{Pn/Pn+1 versus F/Pn}
|P_n(F):P_{n+1}(F)|\le |F:P_n(F)|^d.
\end{equation}

Our determination of both Beauville and non-Beauville groups among standard $2$-generator $p$-groups requires introducing some refinements of the lower $p$-central series.
To this purpose, we first develop a collection of tools regarding power-commutator calculus with respect to the terms of the lower $p$-central series.

Recall that by the Hall-Petrescu identity (see Lemma 1.1 in Chapter VIII of \cite{HB}), given a prime $p$ and $m\in\N\cup\{0\}$, for any two elements $g$ and $h$ of a group we have
\begin{equation}
\label{hall-petrescu}
(gh)^{p^m} \equiv g^{p^m} h^{p^m}
\pmod{\gamma_2(H)^{p^m} \prod_{r=1}^m \gamma_{p^r}(H)^{p^{m-r}}},
\end{equation}
where $H=\langle g,h \rangle$, and
\begin{equation}
\label{hall-petrescu comm}
[g^{p^m},h] \equiv [g,h]^{p^m}
\pmod{\gamma_2(K)^{p^m} \prod_{r=1}^m \gamma_{p^r}(K)^{p^{m-r}}},
\end{equation}
where $K=\langle g, [g,h] \rangle$.

We also need the following fact about commutators and powers of the subgroups $P_n(G)$, which is Theorem 1.5 in Chapter VIII of \cite{HB}.

\begin{lem}
\label{BH lemma}
Let $G$ be a group.
Then, for all $m,n\in\N$, both $[P_n(G),P_m(G)]$ and $P_n(G)^{p^m}$ are contained in $P_{n+m}(G)$.
\end{lem}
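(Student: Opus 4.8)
The plan is to prove the two assertions separately, dealing with the $p$-th power statement first because it is essentially formal, and then feeding it into the commutator statement.

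For the power statement I would argue by induction on $m$. The case $m=1$ is immediate from the definition, since $P_{n+1}(G)=[P_n(G),G]P_n(G)^p\ge P_n(G)^p$. For $m\ge 2$, every generator $x^{p^m}$ of $P_n(G)^{p^m}$ with $x\in P_n(G)$ can be written as $(x^p)^{p^{m-1}}$ with $x^p\in P_n(G)^p$, so that $P_n(G)^{p^m}\le (P_n(G)^p)^{p^{m-1}}\le P_{n+1}(G)^{p^{m-1}}$, and the inductive hypothesis gives $P_{n+1}(G)^{p^{m-1}}\le P_{n+m}(G)$. This argument uses only the definition, so it can be carried out independently of the commutator statement.

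For the commutator statement I would exploit the product decomposition $P_n(G)=\prod_{i=1}^n\gamma_i(G)^{p^{n-i}}$. Since $P_{n+m}(G)$ is normal in $G$, the inclusion $[P_n(G),P_m(G)]\le P_{n+m}(G)$ holds as soon as each ``building block'' $[\gamma_i(G)^{p^a},\gamma_j(G)^{p^b}]$ does, where $a=n-i$, $b=m-j$, so that $i+j+a+b=n+m$. Setting $N=i+j+a+b$, I would prove by induction on $N$ that $[\gamma_i(G)^{p^a},\gamma_j(G)^{p^b}]\le P_N(G)$ for all $i,j\ge 1$ and $a,b\ge 0$ with this sum. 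The base case $a=b=0$ is the classical inclusion $[\gamma_i(G),\gamma_j(G)]\le\gamma_{i+j}(G)\le P_N(G)$. In the inductive step, assuming say $a\ge 1$, I would take generators $g\in\gamma_i(G)$ and $h\in\gamma_j(G)^{p^b}$ and apply the Hall--Petrescu identity \eqref{hall-petrescu comm} to $[g^{p^a},h]$. The main term $[g,h]^{p^a}$ is controlled by the induction hypothesis at the smaller value $i+j+b<N$, which gives $[g,h]\in P_{i+j+b}(G)$, followed by the power statement above, sending the $p^a$-th power into $P_{i+j+b+a}(G)=P_N(G)$.

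The delicate point, where I expect the real work to lie, is bounding the ``error'' produced by \eqref{hall-petrescu comm}, namely the modulus $\gamma_2(K)^{p^a}\prod_{r=1}^a\gamma_{p^r}(K)^{p^{a-r}}$ with $K=\langle g,[g,h]\rangle$. The key observation is that every nontrivial commutator in $K$ must involve the high-weight element $c=[g,h]\in P_{i+j+b}(G)$ at least once, so that repeated use of the inclusion $[P_k(G),G]\le P_{k+1}(G)$ yields $\gamma_2(K)\le P_{i+j+b+1}(G)$ and, more generally, $\gamma_{p^r}(K)\le P_{i+j+b+p^r-1}(G)$. Combining these with the power statement, the term $\gamma_2(K)^{p^a}$ lands in $P_{N+1}(G)$ while $\gamma_{p^r}(K)^{p^{a-r}}$ lands in $P_{N+(p^r-1-r)}(G)$; both lie in $P_N(G)$ once one checks the elementary inequality $p^r\ge r+1$, valid for all $r\ge 1$. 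Finally, the cases with $a=0$ reduce to those already treated after interchanging the two arguments, so the induction closes.
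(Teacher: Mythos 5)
Your proof is correct, but note that the paper does not prove this lemma at all: it is quoted verbatim as Theorem~1.5 in Chapter~VIII of \cite{HB}, so there is no in-paper argument to compare against. What you have written is essentially a self-contained reconstruction of the standard textbook proof. The power statement via the telescoping $P_n(G)^{p^m}\le (P_n(G)^p)^{p^{m-1}}\le P_{n+1}(G)^{p^{m-1}}$ is clean and purely formal. For the commutator statement, your reduction to the building blocks $[\gamma_i(G)^{p^a},\gamma_j(G)^{p^b}]$ is legitimate because $P_{n+m}(G)$ is normal and $[xy,z]=[x,z]^y[y,z]$ lets you check the inclusion on the factors of the product decomposition $P_n(G)=\prod_{i=1}^n\gamma_i(G)^{p^{n-i}}$; the induction on $N=i+j+a+b$ closes correctly, since the main term $[g,h]^{p^a}$ is handled by the hypothesis at $i+j+b<N$ followed by the power statement, and the error term of \eqref{hall-petrescu comm} is controlled by the observation that $\gamma_2(K)$ is the normal closure in $K$ of $[g,[g,h]]\in[G,P_{i+j+b}(G)]\le P_{i+j+b+1}(G)$, whence $\gamma_s(K)\le P_{i+j+b+s-1}(G)$ by iteration, and $p^r\ge r+1$ does the rest. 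Two small points you should make explicit in a final write-up: that it suffices to verify the commutator inclusion on the generators $g^{p^a}$ of $\gamma_i(G)^{p^a}$ because the target is normal in $G$, and that the case $a=0$, $b\ge 1$ genuinely reduces to the previous one via $[A,B]=[B,A]$. Neither is a gap, just bookkeeping. In short: a correct proof of a statement the paper delegates to the literature, and one that works for all primes $p$, matching the generality of the stated lemma.
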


Now we can translate \eqref{hall-petrescu} and \eqref{hall-petrescu comm} into congruences with respect to subgroups of the lower $p$-central series.

\begin{pro}
\label{hall-petrescu modulo Q}
Let $G$ be a group.
Then, for every odd prime $p$ and every $m\in\N\cup\{0\}$ and $n\in\N$, the following hold:
\begin{enumerate}
\item
For every $a,b\in P_n(G)$, we have
\begin{equation}
\label{power modulo P}
(ab)^{p^m} \equiv a^{p^m} b^{p^m} \pmod{P_{2n+m}(G)}.
\end{equation}
If $p\ge 5$ and $a,b\in\gamma_n(G)$ then we furthermore have
\begin{equation}
\label{power modulo P p>3}
(ab)^{p^m} \equiv a^{p^m} b^{p^m}
\pmod{\gamma_{2n}(G)^{p^m} P_{2n+m+1}(G)}.
\end{equation}
\item
For every $a\in P_n(G)$ and $g\in G$, we have
\begin{equation}
\label{power-comm modulo P}
[a^{p^m},g] \equiv [a,g]^{p^m} \pmod{P_{2n+m+1}(G)}.
\end{equation}
\end{enumerate}
\end{pro}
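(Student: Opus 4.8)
The plan is to apply the Hall--Petrescu identities \eqref{hall-petrescu} and \eqref{hall-petrescu comm} and then bound each factor of the resulting modulus using \cref{BH lemma}. The common engine is a depth estimate: if $H\le P_n(G)$, then $\gamma_j(H)\le P_{jn}(G)$ for all $j\ge1$. This is clear for $j=1$, and the inductive step reads $[\gamma_j(H),H]\le[P_{jn}(G),P_n(G)]\le P_{(j+1)n}(G)$ by \cref{BH lemma}. Together with the power bound $P_k(G)^{p^\ell}\le P_{k+\ell}(G)$ of \cref{BH lemma}, a typical factor $\gamma_{p^r}(H)^{p^{m-r}}$ then lands in $P_{p^rn+m-r}(G)$, and the whole argument reduces to checking an elementary inequality guaranteeing that $p^rn+m-r$ is at least the target depth for every $r\ge1$. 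The step I expect to require the most care is the sharper depth estimate needed in part (ii), where one must extract an extra ``$+1$'' from the fact that $[a,g]$ lies one level deeper than $a$; and, for \eqref{power modulo P p>3}, the bookkeeping of which factors are absorbed into the $\gamma_{2n}(G)^{p^m}$ term versus the $P$-term.

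For \eqref{power modulo P}, take $H=\langle a,b\rangle\le P_n(G)$ in \eqref{hall-petrescu}. Then $\gamma_2(H)^{p^m}\le P_{2n}(G)^{p^m}\le P_{2n+m}(G)$, while each $\gamma_{p^r}(H)^{p^{m-r}}\le P_{p^rn+m-r}(G)$ lies in $P_{2n+m}(G)$ as soon as $(p^r-2)n\ge r$. Since $n\ge1$ and $p\ge3$, we have $(p^r-2)n\ge 3^r-2\ge r$ for all $r\ge1$; this is exactly where oddness of $p$ is used, as the case $p=2$, $r=1$ already fails. Hence the modulus of \eqref{hall-petrescu} is contained in $P_{2n+m}(G)$, which gives \eqref{power modulo P}.

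For the refined congruence \eqref{power modulo P p>3}, take $a,b\in\gamma_n(G)$, so that $H\le\gamma_n(G)$ and $\gamma_2(H)\le[\gamma_n(G),\gamma_n(G)]\le\gamma_{2n}(G)$; thus $\gamma_2(H)^{p^m}$ is already absorbed into the allowed term $\gamma_{2n}(G)^{p^m}$. The remaining factors $\gamma_{p^r}(H)^{p^{m-r}}\le P_{p^rn+m-r}(G)$ must now be pushed into the deeper $P_{2n+m+1}(G)$, which requires the stronger inequality $(p^r-2)n\ge r+1$. For $p\ge5$ this holds, since $(p^r-2)n\ge 5^r-2\ge r+1$ for all $r\ge1$, whereas it fails for $p=3$, $n=1$, $r=1$; this is precisely the reason for the hypothesis $p\ge5$ in \eqref{power modulo P p>3}.

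Finally, \eqref{power-comm modulo P} is the same scheme applied to \eqref{hall-petrescu comm} with $K=\langle a,[a,g]\rangle$, exploiting that the second generator is deeper: $c:=[a,g]\in[P_n(G),G]\le P_{n+1}(G)$. The key refinement is that $\gamma_j(K)\le P_{jn+1}(G)$ for every $j\ge2$. Indeed $\gamma_2(K)$ is the normal closure in $K$ of $[a,c]\in[P_n(G),P_{n+1}(G)]\le P_{2n+1}(G)$, and $P_{2n+1}(G)$ is normal in $G$, so $\gamma_2(K)\le P_{2n+1}(G)$; the inductive step $[\gamma_j(K),K]\le[P_{jn+1}(G),P_n(G)]\le P_{(j+1)n+1}(G)$ then uses \cref{BH lemma}. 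Consequently $\gamma_2(K)^{p^m}\le P_{2n+m+1}(G)$ directly, while $\gamma_{p^r}(K)^{p^{m-r}}\le P_{p^rn+1+m-r}(G)\le P_{2n+m+1}(G)$ whenever $(p^r-2)n\ge r$, which holds for every odd $p$ as in the first case. Thus the modulus of \eqref{hall-petrescu comm} lies in $P_{2n+m+1}(G)$, giving \eqref{power-comm modulo P}.
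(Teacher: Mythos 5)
Your proof is correct and follows essentially the same route as the paper: apply the Hall--Petrescu identities and push each factor of the modulus into the appropriate term of the lower $p$-central series via \cref{BH lemma}, with the extra ``$+1$'' in part (ii) coming from $[a,g]\in P_{n+1}(G)$. Your explicit inequalities $(p^r-2)n\ge r$ (resp.\ $\ge r+1$) are just a spelled-out version of the paper's observation that the product of the deeper factors is already contained in $P_{pn+m-1}(G)$ (resp.\ $P_{pn+m}(G)$) ``since $p$ is odd''.
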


\begin{proof}
(i)
We apply the Hall-Petrescu identity to $(ab)^{p^m}$ with $a,b\in P_n(G)$.
Set $H=\langle a,b \rangle$.
Then, by \cref{BH lemma}, the modulus in \eqref{hall-petrescu} is contained in
\[
P_{2n+m}(G) \prod_{r=1}^m P_{p^r n+m-r}(G)
\le 
P_{2n+m}(G) P_{pn+m-1}(G)
\le
P_{2n+m}(G),
\]
since $p$ is odd.
If $p\ge 5$ and $a,b\in\gamma_n(G)$ then the modulus is contained in
\[
\gamma_{2n}(G)^{p^m} P_{pn+m-1}(G) \le \gamma_{2n}(G)^{p^m} P_{2n+m+1}(G).
\]
The result follows.

(ii)
Now we apply the commutator Hall-Petrescu identity \eqref{hall-petrescu comm} to $[a^{p^m},g]$ with $a\in P_n(G)$ and $g\in G$.
We only have to observe that, if $K=\langle a, [a,g] \rangle$, then
\[
\gamma_2(K)^{p^m}\le [P_n(G),G,P_n(G)]^{p^m}\le P_{2n+1}(G)^{p^m} \le P_{2n+m+1}(G),
\]
and for all $r=1,\ldots,m$,
\[
\gamma_{p^r}(K)^{p^{m-r}} \le P_{p^rn+m-r+1}(G) \le P_{pn+m}(G) \le P_{2n+m+1}(G),
\]
since $p$ is odd.
\end{proof}

\begin{pro}
\label{powers and comms mod Pn+2}
Let $G$ be a group, let $p$ be an odd prime, and let $m\in\N\cup\{0\}$ and $n\in\N$.
Then:
\begin{enumerate}
\item
The map
\[
\begin{matrix}
P_n(G) & \longrightarrow & P_{n+m}(G)/P_{n+m+1}(G)
\\[5pt]
x & \longmapsto & x^{p^m}P_{n+m+1}(G)
\end{matrix}
\]
is a group homomorphism.
\end{enumerate}
Furthermore, if $L$ and $N$ are normal subgroups of $G$ contained in $P_n(G)$ then the following hold:
\vspace{3pt}
\begin{enumerate}
\setcounter{enumi}{1}
\setlength\itemsep{7pt}
\item
The subgroup $L^{p^m}$ and the set $\{x^{p^m}\mid x\in L\}$ coincide modulo
$P_{n+m+1}(G)$.
\item
We have
\begin{equation}
\label{exp of product}
(LN)^{p^m} P_{n+m+1}(G) = L^{p^m} N^{p^m} P_{n+m+1}(G).
\end{equation}
\item
For every $s\in\{0,\ldots,m\}$,
\begin{equation}
\label{double exp}
(L^{p^s})^{p^{m-s}} P_{n+m+1}(G) = L^{p^m} P_{n+m+1}(G).
\end{equation}
\item
We have
\begin{equation}
\label{exp inside comm}
[L,G]^{p^m} P_{n+m+2}(G) = [L^{p^m},G] P_{n+m+2}(G).
\end{equation}
\end{enumerate}
\end{pro}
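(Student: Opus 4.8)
The plan is to prove (i) first and then derive (ii)--(v) from it, always in combination with \cref{hall-petrescu modulo Q} and \cref{BH lemma}. The recurring principle is that the \emph{set} of $p^m$-th powers of a subgroup coincides with the verbal subgroup it generates, once one passes to the quotient by the next term of the lower $p$-central series. For (i), I would apply \eqref{power modulo P} to $a,b\in P_n(G)$ to get $(ab)^{p^m}\equiv a^{p^m}b^{p^m}\pmod{P_{2n+m}(G)}$; since $n\ge 1$ we have $2n+m\ge n+m+1$, so the congruence already holds modulo $P_{n+m+1}(G)$, and because $P_n(G)^{p^m}\le P_{n+m}(G)$ by \cref{BH lemma}, the stated map is a well-defined homomorphism.

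For (ii), one inclusion modulo $P_{n+m+1}(G)$ is trivial since $\{x^{p^m}\mid x\in L\}\subseteq L^{p^m}$. For the reverse, I would use (i): the image of $L$ under $x\mapsto x^{p^m}P_{n+m+1}(G)$ is already a subgroup of $P_{n+m}(G)/P_{n+m+1}(G)$, and since this image generates $L^{p^m}P_{n+m+1}(G)/P_{n+m+1}(G)$, the two must coincide, which is exactly the claim. Parts (iii) and (iv) then follow by combining (i) and (ii). For (iii) the inclusion $L^{p^m}N^{p^m}\le(LN)^{p^m}$ is immediate, and for the reverse I would write a typical element of $LN$ as $\ell n$ and apply (i) to obtain $(\ell n)^{p^m}\equiv\ell^{p^m}n^{p^m}\pmod{P_{n+m+1}(G)}$, invoking (ii) for $LN$. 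For (iv), the inclusion $L^{p^m}\le(L^{p^s})^{p^{m-s}}$ comes from the identity $x^{p^m}=(x^{p^s})^{p^{m-s}}$; for the reverse I would use (ii) to write an element of $L^{p^s}$ as $x^{p^s}u$ with $x\in L$ and $u\in P_{n+s+1}(G)$, expand $(x^{p^s}u)^{p^{m-s}}$ via \eqref{power modulo P} at level $n+s$, and observe that the error $u^{p^{m-s}}$ lands in $P_{n+m+1}(G)$ by \cref{BH lemma}.

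The main obstacle is part (v), where the power and commutator operations interact. The key input is \eqref{power-comm modulo P}, which gives $[a^{p^m},g]\equiv[a,g]^{p^m}\pmod{P_{2n+m+1}(G)}$, hence modulo $P_{n+m+2}(G)$ since $n\ge 1$. For the inclusion $[L^{p^m},G]\subseteq[L,G]^{p^m}P_{n+m+2}(G)$, I would take a generator $[w,g]$ with $w\in L^{p^m}$, use (ii) to write $w=x^{p^m}v$ with $x\in L$ and $v\in P_{n+m+1}(G)$, and expand $[x^{p^m}v,g]=[x^{p^m},g]^{v}[v,g]$; here $[v,g]\in P_{n+m+2}(G)$ and the conjugation changes $[x^{p^m},g]$ only by a commutator in $[P_{n+m+1}(G),P_{n+m+1}(G)]\le P_{n+m+2}(G)$, so \eqref{power-comm modulo P} lets me replace $[x^{p^m},g]$ by $[x,g]^{p^m}\in[L,G]^{p^m}$. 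For the reverse inclusion I would apply (ii) to $[L,G]\le P_{n+1}(G)$ to reduce to showing $c^{p^m}\in[L^{p^m},G]P_{n+m+2}(G)$ for a single $c\in[L,G]$; writing $c$ as a product of commutators $[x_i,g_i]^{\pm 1}$ and using the homomorphism of (i) at level $n+1$ turns $c^{p^m}$ into the corresponding product of the $[x_i,g_i]^{p^m}$ modulo $P_{n+m+2}(G)$, each of which equals $[x_i^{p^m},g_i]$ modulo $P_{n+m+2}(G)$ by \eqref{power-comm modulo P}.

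Throughout, the only genuinely delicate points are the index bookkeeping and the verification that every error term is absorbed by the intended modulus; all the required inequalities among indices reduce to $n\ge 1$ together with the containments supplied by \cref{BH lemma}.
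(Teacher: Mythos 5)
Your proposal is correct and follows essentially the same route as the paper: (i) from \eqref{power modulo P} together with \cref{BH lemma}, (ii) and (iii) as direct consequences of the homomorphism in (i), (iv) by writing an element of $L^{p^s}$ as $x^{p^s}z$ with $z\in P_{n+s+1}(G)$ and absorbing $z^{p^{m-s}}$ via \cref{BH lemma}, and (v) from \eqref{power-comm modulo P} combined with (i) applied at level $n+1$ and the containment $[P_{n+m+1}(G),G]\le P_{n+m+2}(G)$. You have merely written out in full the index bookkeeping that the paper's proof leaves implicit, and all of it checks out.
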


\begin{proof}
Note that (i) is an immediate consequence of \eqref{power modulo P}, and then (ii) and (iii) readily follow.
As for (iv), the inclusion $\supseteq$ is trivial.
On the other hand, if $y\in L^{p^s}$ then, by (ii), $y=x^{p^s}z$ for some $x\in L$ and $z\in P_{n+s+1}(G)$, and then 
\[
y^{p^{m-s}} \equiv x^{p^m} z^{p^{m-s}} \pmod{P_{n+m+1}(G)}.
\]
Since $P_{n+s+1}(G)^{p^{m-s}}\le P_{n+m+1}(G)$ by \cref{BH lemma}, we get the other inclusion.
Finally, (v) follows from \eqref{power-comm modulo P}, by using (i) and that
$[P_{n+m+1}(G),G]\le P_{n+m+2}(G)$.
\end{proof}

Since $P_{n+1}(G)^{p^m}\le P_{n+m+1}(G)$, it follows from (i) in the previous proposition that the $p^m$th power map induces a homomorphism
\[
\pi_{n,m} \colon P_n(G)/P_{n+1}(G)\rightarrow P_{n+m}(G)/P_{n+m+1}(G).
\]
We next see that this is a monomorphism if $G$ is a free group.
We need the following result, which is Theorem~1.8(a) in Chapter VIII of \cite{HB}.

\begin{thm}
\label{huppert bijection}
Let $F$ be a free group and let $p$ be an odd prime.
For every $n\in\N$, set
\[
\mathfrak{D}_n(F)
=
\frac{F}{F^p \gamma_2(F)} \times \frac{\gamma_2(F)}{\gamma_2(F)^p \gamma_3(F)} \times \cdots \times \frac{\gamma_n(F)}{\gamma_n(F)^p \gamma_{n+1}(F)}.
\]
Then the map
\[
\begin{matrix}
f_n & \colon & \mathfrak{D}_n(F)  & \longrightarrow & P_n(F)/P_{n+1}(F)
\\[5pt]
& & (\overline g_1,\ldots,\overline g_n) &  \longmapsto 
& g_1^{p^{n-1}} g_2^{p^{n-2}} \ldots g_{n-1}^p g_n P_{n+1}(F)
\end{matrix}
\]
is an isomorphism.
(Here, $\overline g_i$ denotes the image of $g_i\in\gamma_i(F)$ modulo
$\gamma_i(F)^p\gamma_{i+1}(F)$ for every $i=1,\ldots,n$.)
\end{thm}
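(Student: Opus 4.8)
The plan is to show that $f_n$ is a well-defined, surjective group homomorphism, and then to obtain injectivity essentially for free by checking that its source and target have the same finite cardinality. The power-commutator identities of \cref{powers and comms mod Pn+2} take care of well-definedness, the homomorphism property, and surjectivity in a uniform way, and the only place where freeness genuinely enters is the final cardinality count, which rests on Witt's formula for the ranks of the lower central quotients of a free group. I would take $F$ to have finite rank $d$, which is the only case needed in this paper; the general case reduces to this one since every relation involves only finitely many generators.

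First I would establish well-definedness and the homomorphism property together. Fix $i\in\{1,\dots,n\}$. Specializing the power homomorphism of \cref{powers and comms mod Pn+2}(i) to level $i$ and exponent $p^{n-i}$, and restricting it to $\gamma_i(F)\le P_i(F)$, yields a homomorphism $\varphi_i\colon\gamma_i(F)\to P_n(F)/P_{n+1}(F)$, $g\mapsto g^{p^{n-i}}P_{n+1}(F)$. This $\varphi_i$ vanishes on $\gamma_{i+1}(F)$, since $\gamma_{i+1}(F)^{p^{n-i}}\le P_{i+1}(F)^{p^{n-i}}\le P_{n+1}(F)$ by \cref{BH lemma}, and it vanishes on $\gamma_i(F)^p$, because that subgroup is generated by the elements $g^p$ with $g\in\gamma_i(F)$ and $\varphi_i(g^p)=g^{p^{n-i+1}}\in P_i(F)^{p^{n-i+1}}\le P_{n+1}(F)$, again by \cref{BH lemma}. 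Hence $\varphi_i$ factors through $\gamma_i(F)/\gamma_i(F)^p\gamma_{i+1}(F)$. Finally, $[P_n(F),P_n(F)]\le P_{2n}(F)\le P_{n+1}(F)$ shows that $P_n(F)/P_{n+1}(F)$ is abelian, so the pointwise product $f_n=\varphi_1\cdots\varphi_n$ is again a homomorphism; here one uses \cref{powers and comms mod Pn+2}(i) once more to combine $g_i^{p^{n-i}}h_i^{p^{n-i}}$ into $(g_ih_i)^{p^{n-i}}$ modulo $P_{n+1}(F)$.

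For surjectivity I would invoke the description $P_n(F)=\gamma_1(F)^{p^{n-1}}\gamma_2(F)^{p^{n-2}}\cdots\gamma_n(F)$ recalled at the start of this section. By \cref{powers and comms mod Pn+2}(ii), applied with $L=\gamma_i(F)$ and $m=n-i$, the subgroup $\gamma_i(F)^{p^{n-i}}$ coincides modulo $P_{n+1}(F)$ with the set $\{g^{p^{n-i}}\mid g\in\gamma_i(F)\}$ of single powers. Consequently every coset of $P_{n+1}(F)$ in $P_n(F)$ can be written as $g_1^{p^{n-1}}g_2^{p^{n-2}}\cdots g_n\,P_{n+1}(F)$ for suitable $g_i\in\gamma_i(F)$, which is precisely a value of $f_n$.

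The heart of the matter is injectivity, and here I would deliberately avoid a direct argument and instead compare orders, which is the step that genuinely uses freeness. By Witt's formula the free abelian group $\gamma_i(F)/\gamma_{i+1}(F)$ has rank $M_d(i)$, so $\gamma_i(F)/\gamma_i(F)^p\gamma_{i+1}(F)$ is an $\F_p$-vector space of dimension $M_d(i)$, whence
\[
|\mathfrak{D}_n(F)|=\prod_{i=1}^n p^{M_d(i)}=p^{\sum_{i=1}^n M_d(i)}=p^{s_d(n)},
\]
using the identity $s_d(n)=\sum_{i=1}^n M_d(i)$. On the other hand $|P_n(F):P_{n+1}(F)|=p^{s_d(n)}$ by the very definition of $s_d$. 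Thus $\mathfrak{D}_n(F)$ and $P_n(F)/P_{n+1}(F)$ are finite groups of equal order, and a surjective homomorphism between finite groups of equal order is automatically an isomorphism. The main obstacle is exactly this dependence on the precise rank $M_d(i)$ of $\gamma_i(F)/\gamma_{i+1}(F)$: for a non-free group $f_n$ need not be injective, and it is Witt's formula, matched against $s_d(n)=\sum_{i=1}^n M_d(i)$, that forces the two counts to agree.
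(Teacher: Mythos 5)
Your reduction of the statement to a counting argument is cleanly executed in its easy parts: well-definedness, the homomorphism property and surjectivity of $f_n$ all follow from \cref{BH lemma} and \cref{powers and comms mod Pn+2} exactly as you describe, and the reduction to finite rank is harmless. Note, however, that the paper offers no proof to compare against --- it quotes the statement verbatim as Theorem~1.8(a) in Chapter VIII of \cite{HB} --- so everything of substance in your write-up has to stand on its own.

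The difficulty is that the step you yourself call ``the heart of the matter'' is circular. Once surjectivity is in hand, injectivity of $f_n$ is equivalent to the equality $|P_n(F):P_{n+1}(F)|=|\mathfrak{D}_n(F)|$, and Witt's formula gives $|\mathfrak{D}_n(F)|=p^{\sum_{i=1}^n M_d(i)}$; so what you actually need is the lower bound $s_d(n)\ge\sum_{i=1}^n M_d(i)$. You obtain it from the identity $s_d(n)=\sum_{i=1}^n M_d(i)$, which the paper quotes from Lemma~3.1 of \cite{J}; but, given your surjectivity argument, that identity is precisely the numerical content of the theorem being proved, and in the literature it is derived from the Huppert--Blackburn theorem itself (your surjectivity already yields the upper bound $s_d(n)\le\sum_{i=1}^n M_d(i)$ for free, so the quoted identity is exactly the injectivity claim in disguise). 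Nothing proved earlier in this paper supplies the reverse inequality independently. To close the gap one needs a genuine argument for linear independence --- for instance, the collection process showing that the elements $c^{p^{n-w(c)}}$, for $c$ ranging over basic commutators of weight $w(c)\le n$, remain independent modulo $P_{n+1}(F)$, or an embedding of $F$ into a suitable completed (power series) algebra in which leading terms can be read off. As written, the hard half of the theorem has been relocated into a citation that presupposes it.
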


\begin{lem}
\label{isomorphism free case}
If $F$ is a free group, then the map
\[
\begin{matrix}
\pi_{n,m} & \colon & P_n(F)/P_{n+1}(F) & \longrightarrow & P_{n+m}(F)/P_{n+m+1}(F)
\\[5pt]
& & xP_{n+1}(F) & \longmapsto & x^{p^m}P_{n+m+1}(F)
\end{matrix}
\]
is a monomorphism.
\end{lem}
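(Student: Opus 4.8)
The plan is to make the map $\pi_{n,m}$ completely explicit by transporting it through the isomorphisms provided by \cref{huppert bijection}, after which injectivity will be immediate. First I would take an arbitrary element $xP_{n+1}(F)$ of the domain and write $x$ in the normal form coming from $f_n$, namely
\[
x \equiv g_1^{p^{n-1}} g_2^{p^{n-2}} \cdots g_{n-1}^{p} g_n \pmod{P_{n+1}(F)},
\qquad g_i\in\gamma_i(F).
\]
Here each factor $g_i^{p^{n-i}}$ lies in $P_n(F)$, since $\gamma_i(F)^{p^{n-i}}\le P_n(F)$ by \cref{BH lemma}. I would then compute $\pi_{n,m}(xP_{n+1}(F))=x^{p^m}P_{n+m+1}(F)$ by distributing the $p^m$th power across this product.

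The legitimacy of that distribution is the one point requiring care, and it is exactly what \cref{powers and comms mod Pn+2}(i) supplies: the $p^m$th power map $P_n(F)\to P_{n+m}(F)/P_{n+m+1}(F)$ is a homomorphism, and it annihilates $P_{n+1}(F)$ because $P_{n+1}(F)^{p^m}\le P_{n+m+1}(F)$ by \cref{BH lemma}. Hence it factors through $P_n(F)/P_{n+1}(F)$ and I may apply it term by term to the normal form, obtaining
\[
x^{p^m} \equiv g_1^{p^{n+m-1}} g_2^{p^{n+m-2}} \cdots g_n^{p^m} \pmod{P_{n+m+1}(F)}.
\]

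The key observation is that the right-hand side is precisely $f_{n+m}(\overline g_1,\ldots,\overline g_n,\overline 1,\ldots,\overline 1)$, with $m$ trivial coordinates appended: indeed the exponent attached to $g_i\in\gamma_i(F)$ is $p^{(n+m)-i}$, which is exactly the exponent prescribed by the normal form defining $f_{n+m}$. Consequently, under the identifications $f_n$ and $f_{n+m}$ the map $\pi_{n,m}$ is nothing but the natural inclusion $\mathfrak{D}_n(F)\hookrightarrow\mathfrak{D}_{n+m}(F)$ that sends $(\overline g_1,\ldots,\overline g_n)$ to $(\overline g_1,\ldots,\overline g_n,\overline 1,\ldots,\overline 1)$; in other words $\pi_{n,m}=f_{n+m}\circ\iota\circ f_n^{-1}$. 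Since the first $n$ direct factors of $\mathfrak{D}_n(F)$ and $\mathfrak{D}_{n+m}(F)$ literally coincide, $\iota$ is injective, and therefore so is $\pi_{n,m}$.

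I expect no genuine obstacle beyond the power-distribution step already handled by \cref{powers and comms mod Pn+2}(i); the conceptual content is simply recognizing that Huppert's isomorphisms $f_n$ and $f_{n+m}$ are compatible with raising to $p^m$th powers, so that $\pi_{n,m}$ degenerates into a coordinate inclusion. One should only double-check the bookkeeping of exponents ($p^{n-i}$ becoming $p^{n-i+m}=p^{(n+m)-i}$) to be sure the image lands in the correct factors of $\mathfrak{D}_{n+m}(F)$.
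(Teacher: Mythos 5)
Your argument is correct and is essentially the paper's own proof made explicit: both rest on \cref{huppert bijection} applied at levels $n$ and $n+m$, together with the fact (from \cref{powers and comms mod Pn+2}(i)) that the $p^m$th power map distributes across the normal form $g_1^{p^{n-1}}\cdots g_n$. The only difference is cosmetic — you exhibit $\pi_{n,m}$ as $f_{n+m}\circ\iota\circ f_n^{-1}$ for the coordinate inclusion $\iota$, whereas the paper concludes by comparing the cardinality of the image with $|P_n(F)/P_{n+1}(F)|$.
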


\begin{proof}
From the definition of the map $f_{n+m}$, it follows that
\[
| \pi_{n,m}(P_n(F)/P_{n+1}(F)) |
\ge
\prod_{i=1}^n \, |\gamma_i(F)/\gamma_i(F)^p\gamma_{i+1}(F)|,
\]
and since $f_n$ is bijection, the latter coincides with $|P_n(F)/P_{n+1}(F)|$.
This implies that $\pi_{n,m}$ is injective.
\end{proof}

Now we introduce a first refinement of the lower $p$-central series.

\begin{dfn}
Let $G$ be a group, let $p$ be a prime, and let $n$ and $k$ be positive integers, with $k\le n$.
Then we define the following subgroup of $P_n(G)$:
\[
Q_{n,k}(G) = G^{p^{n-1}} \gamma_2(G)^{p^{n-2}} \ldots \gamma_{k-1}(G)^{p^{n-k+1}} \gamma_k(G)^{p^{n-k}} P_{n+1}(G).
\]
\end{dfn}

\vspace{10pt}

Thus we have $P_n(G)=Q_{n,n}(G)$,  and if we glue together the series $\{Q_{n,k}(G)\}_{k=1}^n$, we get a refinement of the lower $p$-central series of $G$ consisting of verbal subgroups, so in particular of characteristic subgroups of $G$.
Observe that the first two terms of this refinement between $P_{n+1}(G)$ and $P_n(G)$ are
\[
Q_{n,1}(G) = G^{p^{n-1}} P_{n+1}(G)
\quad
\text{and}
\quad
Q_{n,2}(G) = G^{p^{n-1}} \gamma_2(G)^{p^{n-2}} P_{n+1}(G).
\]

\begin{pro}
\label{special subgroups}
Let $G$ be a group and let $p$ be an odd prime.
Then, for every $1\le k \le n$, we have
\[
Q_{n,k}(G)^p [Q_{n,k}(G), G] = Q_{n+1, k+1}(G).
\]
\end{pro}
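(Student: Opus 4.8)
The plan is to establish the identity modulo $P_{n+2}(G)$ and then upgrade it to a genuine equality of subgroups. Both sides contain $P_{n+2}(G)$: since $P_{n+1}(G)\le Q_{n,k}(G)$, the left-hand side contains $P_{n+1}(G)^p[P_{n+1}(G),G]=P_{n+2}(G)$, while $P_{n+2}(G)\le Q_{n+1,k+1}(G)$ directly from the definition. Writing $A_i=\gamma_i(G)^{p^{n-i}}$ for $1\le i\le k$, so that $Q_{n,k}(G)=A_1\cdots A_k\,P_{n+1}(G)$ with every factor contained in $P_n(G)$ by \cref{BH lemma}, it remains to compute $Q_{n,k}(G)^p$ and $[Q_{n,k}(G),G]$ separately modulo $P_{n+2}(G)$.

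For the power, I would iterate the product formula \eqref{exp of product} of \cref{powers and comms mod Pn+2}(iii) with $m=1$ to obtain $Q_{n,k}(G)^p\equiv A_1^p\cdots A_k^p\pmod{P_{n+2}(G)}$, the contribution of $P_{n+1}(G)^p$ vanishing since $P_{n+1}(G)^p\le P_{n+2}(G)$. Each factor is then handled by the double-exponent identity \eqref{double exp} applied to $L=\gamma_i(G)\le P_i(G)$ with $s=n-i$ and $m=n-i+1$, which gives $A_i^p=(\gamma_i(G)^{p^{n-i}})^p\equiv\gamma_i(G)^{p^{(n+1)-i}}\pmod{P_{n+2}(G)}$. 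Hence $Q_{n,k}(G)^p\equiv\prod_{i=1}^{k}\gamma_i(G)^{p^{(n+1)-i}}$, supplying the factors of $Q_{n+1,k+1}(G)$ of indices $1,\dots,k$.

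For the commutator, the standard identity $[HK,G]=[H,G][K,G]$ for normal subgroups, iterated over the normal factors of $Q_{n,k}(G)$, yields $[Q_{n,k}(G),G]=[A_1,G]\cdots[A_k,G]\,[P_{n+1}(G),G]$, and $[P_{n+1}(G),G]\le P_{n+2}(G)$. Invoking \eqref{exp inside comm} of \cref{powers and comms mod Pn+2}(v) with $L=\gamma_i(G)$ and $m=n-i$, together with $[\gamma_i(G),G]=\gamma_{i+1}(G)$, gives $[A_i,G]\equiv\gamma_{i+1}(G)^{p^{(n+1)-(i+1)}}\pmod{P_{n+2}(G)}$, so that $[Q_{n,k}(G),G]\equiv\prod_{j=2}^{k+1}\gamma_j(G)^{p^{(n+1)-j}}$, supplying the factors of indices $2,\dots,k+1$.

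Multiplying the two congruences and merging the index ranges (the overlap on $2,\dots,k$ causes no harm) produces $\prod_{i=1}^{k+1}\gamma_i(G)^{p^{(n+1)-i}}P_{n+2}(G)=Q_{n+1,k+1}(G)$, which together with the containments noted at the outset proves the equality. The step needing genuine care — and the main source of error — is the modulus bookkeeping: each application of \cref{powers and comms mod Pn+2} is to a term $\gamma_i(G)\le P_i(G)$ rather than to $P_n(G)$, so one must verify in every instance that the nominal modulus collapses to the single common value $P_{n+2}(G)$ and that the hypotheses ($p$ odd, $0\le s\le m$) are satisfied; the remaining manipulations are routine distributions of powers and commutators over products of normal subgroups.
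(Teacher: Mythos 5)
Your proposal is correct and follows essentially the same route as the paper: distribute the $p$th power and the commutator over the factors $\gamma_i(G)^{p^{n-i}}$ and $P_{n+1}(G)$ using items (iii)--(v) of \cref{powers and comms mod Pn+2} modulo $P_{n+2}(G)$, then absorb the modulus via $P_{n+2}(G)=P_{n+1}(G)^p[P_{n+1}(G),G]\le Q_{n,k}(G)^p[Q_{n,k}(G),G]$. Your index bookkeeping (applying the lemma to $\gamma_i(G)\le P_i(G)$ with $s=n-i$, $m=n-i+1$, resp.\ $m=n-i$, so that every nominal modulus collapses to $P_{n+2}(G)$) checks out.
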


\begin{proof}
By using different items of \cref{powers and comms mod Pn+2}, we have
\begin{equation}
\label{Q power}
\begin{split}
Q_{n,k}(G)^p P_{n+2}(G)
&=
\Big( \prod_{r=1}^k \, (\gamma_r(G)^{p^{n-r}})^p \Big) P_{n+1}(G)^p P_{n+2}(G)
\\
&=
\Big( \prod_{r=1}^k \, \gamma_r(G)^{p^{n+1-r}} \Big) P_{n+2}(G)
=
Q_{n+1,k}(G),
\end{split}
\end{equation}
and
\begin{equation}
\label{Q commutator}
\begin{split}
[Q_{n,k}(G),G] P_{n+2}(G)
&= \Big( \prod_{r=1}^k \, [\gamma_r(G)^{p^{n-r}},G] \Big) [P_{n+1}(G),G]  P_{n+2}(G)
\\
&= \Big( \prod_{r=1}^k \, \gamma_{r+1}(G)^{p^{n-r}} \Big) P_{n+2}(G).
\end{split}
\end{equation}
By multiplying \eqref{Q power} and \eqref{Q commutator} together, we get
\[
Q_{n,k}(G)^p[Q_{n,k}(G),G] P_{n+2}(G)=Q_{n+1,k+1}(G).
\]
Now, since $P_{n+1}(G)$ is contained in $Q_{n,k}(G)$,
\[
P_{n+2}(G)=P_{n+1}(G)^p[P_{n+1}(G),G] \le Q_{n,k}(G)^p [Q_{n,k}(G), G]
\]
and the result follows.
\end{proof}

We will also need to know the behaviour of $p^{n-2}$nd powers with respect to the subgroup
$Q_{n,2}(G)$.

\begin{pro}
\label{pn-2 powers}
Let $G$ be a group and let $g,h\in G$.
If $p\ge 5$ is a prime and $n\ge 2$ is an integer then
\begin{equation}
\label{pn-2 powers-1}
(gh)^{p^{n-2}} \equiv g^{p^{n-2}} h^{p^{n-2}} \pmod{Q_{n,2}(G)},
\end{equation}
and if $g\equiv h \pmod{P_2(G)}$ then
\begin{equation}
\label{pn-2 powers-2}
g^{p^{n-2}} \equiv h^{p^{n-2}} \pmod{Q_{n,2}(G)}.
\end{equation}
\end{pro}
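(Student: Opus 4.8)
The plan is to establish the two congruences in turn, deriving \eqref{pn-2 powers-1} straight from the Hall-Petrescu identity \eqref{hall-petrescu} and then obtaining \eqref{pn-2 powers-2} as a consequence of it.

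For \eqref{pn-2 powers-1}, I would apply \eqref{hall-petrescu} to $(gh)^{p^{n-2}}$, taking $m=n-2$ and $H=\langle g,h\rangle$. Since $g,h\in G=P_1(G)$, everything reduces to showing that the modulus
\[
\gamma_2(H)^{p^{n-2}}\prod_{r=1}^{n-2}\gamma_{p^r}(H)^{p^{n-2-r}}
\]
is contained in $Q_{n,2}(G)$. The first factor is harmless: from $\gamma_2(H)\le\gamma_2(G)$ we get $\gamma_2(H)^{p^{n-2}}\le\gamma_2(G)^{p^{n-2}}\le Q_{n,2}(G)$ straight from the definition of $Q_{n,2}(G)$. (When $n=2$ the product is empty and the assertion is trivial, so one may assume $n\ge3$.)

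The crux is to show that each factor $\gamma_{p^r}(H)^{p^{n-2-r}}$ already lies in $P_{n+1}(G)\le Q_{n,2}(G)$. Here I would combine $\gamma_{p^r}(H)\le\gamma_{p^r}(G)\le P_{p^r}(G)$ with \cref{BH lemma} to obtain
\[
\gamma_{p^r}(H)^{p^{n-2-r}}\le P_{p^r}(G)^{p^{n-2-r}}\le P_{p^r+n-2-r}(G),
\]
so that it only remains to check $p^r-r\ge3$ for $1\le r\le n-2$. As $p^r-r$ increases with $r$, the tightest case is $r=1$, where the inequality reads $p\ge4$, that is $p\ge5$ for an odd prime. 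This is precisely the point at which the hypothesis $p\ge5$ is used, and I expect it to be the only real obstacle; note that for $p=3$ the term with $r=1$ only reaches $P_n(G)$, which in general is not contained in $Q_{n,2}(G)$, so the congruence genuinely breaks down there.

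For \eqref{pn-2 powers-2}, writing $g=hc$ with $c\in P_2(G)$ and invoking \eqref{pn-2 powers-1} reduces the claim to proving that $c^{p^{n-2}}\in Q_{n,2}(G)$ for every $c\in P_2(G)$. By part (ii) of \cref{powers and comms mod Pn+2} the set of these $p^{n-2}$th powers agrees with $P_2(G)^{p^{n-2}}$ modulo $P_{n+1}(G)$, so it suffices to show $P_2(G)^{p^{n-2}}\le Q_{n,2}(G)$. Using $P_2(G)=G^p\gamma_2(G)$ together with parts (iii) and (iv) of \cref{powers and comms mod Pn+2}, I expect
\[
P_2(G)^{p^{n-2}}P_{n+1}(G)=(G^p)^{p^{n-2}}\gamma_2(G)^{p^{n-2}}P_{n+1}(G)=G^{p^{n-1}}\gamma_2(G)^{p^{n-2}}P_{n+1}(G),
\]
whose right-hand side is exactly $Q_{n,2}(G)$ by definition, completing the argument.
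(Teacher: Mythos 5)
Your proof is correct and follows essentially the same route as the paper: the paper derives \eqref{pn-2 powers-1} from the congruence \eqref{power modulo P p>3} (itself obtained from Hall--Petrescu exactly via your containment $\gamma_{p^r}(H)^{p^{n-2-r}}\le P_{p^r+n-2-r}(G)$ with the same use of $p-1\ge 3$), and it proves \eqref{pn-2 powers-2} by the same reduction to $P_2(G)^{p^{n-2}}\le Q_{n,2}(G)$ via \eqref{double exp}. The only cosmetic difference is that you inline the Hall--Petrescu computation and use \eqref{exp of product} where the paper reuses \eqref{pn-2 powers-1}.
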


\begin{proof}
By \eqref{power modulo P p>3}, we have
\[
(gh)^{p^{n-2}} \equiv g^{p^{n-2}} h^{p^{n-2}} \pmod{\gamma_2(G)^{p^{n-2}} P_{n+1}(G)}.
\]
Since the modulus of this congruence is contained in $Q_{n,2}(G)$, this proves \eqref{pn-2 powers-1}.

On the other hand, we have
\[
P_2(G)^{p^{n-2}}
\le
(G^p)^{p^{n-2}} \gamma_2(G)^{p^{n-2}} Q_{n,2}(G)
=
Q_{n,2}(G),
\]
where the inclusion follows from \eqref{pn-2 powers-1}, and the equality from \eqref{double exp}.
Now if $g\equiv h \pmod{P_2(G)}$ then write $g=hz$ with $z\in P_2(G)$, and by applying
\eqref{pn-2 powers-1} and the last inclusion, we obtain \eqref{pn-2 powers-2}.
\end{proof}

In the case of free groups, more precise information can be given.

\begin{pro}
\label{linearly independence}
Let $F=\langle x_1, \dots, x_d \rangle$ be a free group of rank $d$ and let $p$ be an odd prime. Suppose that $N$ is a normal subgroup of $F$ such that $Q_{n,2}(F) \le N \le P_n(F)$ with $n\ge 2$. Then the elements $x_1^{p^{n-2}}, \dots, x_d^{p^{n-2}}$ are linearly independent modulo $N$.
\end{pro}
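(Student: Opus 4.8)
The plan is to collapse the statement to the injectivity of a single $p$-power map. First I would observe that each $x_i^{p^{n-2}}$ is a $p^{n-2}$th power of an element of $F=P_1(F)$, hence lies in $P_{n-1}(F)$ by \cref{BH lemma}, and that modulo $P_n(F)$ it is exactly the image $\pi_{1,n-2}(x_iP_2(F))$ under the power homomorphism $\pi_{1,n-2}\colon P_1(F)/P_2(F)\to P_{n-1}(F)/P_n(F)$. Now $P_1(F)/P_2(F)=F/F^p\gamma_2(F)$ is an $\F_p$-space of dimension $d$ with basis $x_1P_2(F),\dots,x_dP_2(F)$, and $\pi_{1,n-2}$ is a monomorphism by \cref{isomorphism free case}; therefore the images $x_1^{p^{n-2}}P_n(F),\dots,x_d^{p^{n-2}}P_n(F)$ form a linearly independent family in the elementary abelian group $P_{n-1}(F)/P_n(F)$. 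This already settles the extreme case $N=P_n(F)$.

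To cover the remaining values of $N$ I would invoke the trivial monotonicity of the notion ``independent modulo $N$'': any relation $\prod_{i=1}^d (x_i^{p^{n-2}})^{a_i}\in N$ with $N\le P_n(F)$ forces the product into $P_n(F)$, where, the quotient $P_{n-1}(F)/P_n(F)$ being abelian, it equals $\pi_{1,n-2}\!\big(\sum_i a_i\,x_iP_2(F)\big)$. Triviality of this image together with the injectivity of $\pi_{1,n-2}$ yields $\sum_i a_i x_i\in P_2(F)=F^p\gamma_2(F)$, i.e.\ $a_i\equiv 0\pmod p$ for every $i$. Thus independence modulo $P_n(F)$ descends automatically to independence modulo every $N$ with $N\le P_n(F)$, which covers the whole prescribed range $Q_{n,2}(F)\le N\le P_n(F)$.

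I do not expect a serious computational obstacle: the entire content is already packaged in \cref{isomorphism free case}, and everything else is bookkeeping inside the abelian section $P_{n-1}(F)/P_n(F)$. The one point that deserves care is the interpretation of the conclusion. The hypothesis $Q_{n,2}(F)\le N$ is in fact not used in the implication above (only $N\le P_n(F)$ is); its role is to ensure that the $x_i^{p^{n-2}}$ genuinely generate an elementary abelian group modulo $N$ — each has order dividing $p$ since $x_i^{p^{n-1}}\in F^{p^{n-1}}\le Q_{n,2}(F)$, and, for $n\ge 3$, pairwise commutators lie in $P_{2n-2}(F)\le Q_{n,2}(F)$ — so that ``$\F_p$-linear independence'' is the correct notion. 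One could alternatively run the argument directly modulo a general $N$, rewriting $\prod_i (x_i^{p^{n-2}})^{a_i}\equiv (x_1^{a_1}\cdots x_d^{a_d})^{p^{n-2}}\pmod{Q_{n,2}(F)}$ via \eqref{pn-2 powers-1} before applying injectivity of $\pi_{1,n-2}$; but this route would needlessly appeal to \cref{pn-2 powers}, and hence to $p\ge 5$, whereas the argument above uses only that $p$ is odd, exactly as in the statement.
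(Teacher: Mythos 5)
Your argument is correct, and it takes a genuinely different route from the paper's. The paper argues by contradiction using the explicit normal form of elements of $P_n(F)$ from \cref{huppert bijection}: an alleged dependence $x_1^{p^{n-2}}x_2^{i_2p^{n-2}}\cdots x_d^{i_dp^{n-2}}\in N\le P_n(F)$ is rewritten as $a^{-p^{n-1}}b$ with $b\in\gamma_2(F)$, and then the exponent sum of $x_1$ in the resulting element of $\gamma_2(F)$ is computed to be $p^{n-2}(1+j_1p)\neq 0$, contradicting the characterisation of $\gamma_2(F)$ by vanishing exponent sums. You instead package the whole content into the injectivity of the power map $\pi_{1,n-2}\colon F/P_2(F)\to P_{n-1}(F)/P_n(F)$ from \cref{isomorphism free case}, together with its homomorphism property (\cref{powers and comms mod Pn+2}(i)): a dependence modulo $N\le P_n(F)$ descends to a dependence modulo $P_n(F)$, pulls back under $\pi_{1,n-2}$ to $x_1^{a_1}\cdots x_d^{a_d}\in P_2(F)$, and hence forces all $a_i\equiv 0\pmod p$ because the free generators are a basis of $F/F^p\gamma_2(F)$. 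Since \cref{isomorphism free case} is itself proved via \cref{huppert bijection}, both proofs ultimately rest on the same structural fact about free groups, but yours is shorter, avoids the exponent-sum computation entirely, and cleanly isolates the role of the hypothesis $Q_{n,2}(F)\le N$ as only guaranteeing that ``linear independence modulo $N$'' is meaningful (a point the paper also makes, via the observation that $F^{p^{n-2}}N/N$ is a central section of exponent $p$). Your closing remark that the whole argument needs only $p$ odd, not $p\ge 5$, is consistent with the statement as given.
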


\begin{proof}
By \eqref{double exp} and \eqref{exp inside comm}, we have
\[
(F^{p^{n-2}})^p \le F^{p^{n-1}} P_{n+1}(F) \le Q_{n,2}(F)
\]
and
\[
[F^{p^{n-2}},F]\le \gamma_2(F)^{p^{n-2}} P_{n+1}(F) \le Q_{n,2}(F).
\]
\vspace{-5pt}

\noindent
Since $Q_{n,2}(F)\le N$, it follows that $F^{p^{n-2}}N/N$ is a central section of $F$ of exponent $p$.
In particular, it is an $\F_p$-vector space and it makes sense to speak of linear independence of
$p^{n-2}$nd powers modulo $N$.

Suppose, by way of contradiction, that $x_1^{p^{n-2}}, \dots, x_d^{p^{n-2}}$ are linearly dependent modulo $N$.
We may assume, without loss of generality, that
\[
x_1^{p^{n-2}} x_2^{i_2p^{n-2}} \ldots x_d^{i_dp^{n-2}} \in N
\]
for  some $i_2,\ldots,i_d \in \{0, \dots, p-1\}$.
Now we have $N\le P_n(F)$ and every element of $P_n(F)$ can be written in the form $g_1^{p^{n-1}} g_2^{p^{n-2}} \dots g_n$ for suitable $g_i \in \gamma_i(F)$, by \cref{huppert bijection}.
It follows that
\[
x_1^{p^{n-2}} x_2^{i_2p^{n-2}} \ldots x_d^{i_dp^{n-2}} = a^{-p^{n-1}}b
\]
for some $a \in F$ and $b\in \gamma_2(F)$.
Then we have 
\begin{equation}
\label{element in gamma2}
a^{p^{n-1}} x_1^{p^{n-2}} x_2^{i_2p^{n-2}} \ldots x_d^{i_dp^{n-2}}\in \gamma_2(F).
\end{equation}
Write $a= x_1^{j_1}\dots x_d^{j_d}z$ for some $j_1,\ldots,j_d\in \Z$ and $z\in \gamma_2(F)$.
Then the exponent sum of $x_1$ in (\ref{element in gamma2}) will be $p^{n-2}(1+j_1p)$.
On the other hand, an element of the free group $F$ belongs to $\gamma_2(F)$ if and only if  the exponent sum of each free generator is zero.
Hence we get  $p^{n-2}(1+j_1p)=0$, which is a contradiction.
\end{proof}

Next we introduce a second way of refining the lower $p$-central series, which is also a refinement of the series $\{Q_{n,k}(G)\}$.
In this case, we obtain a family of series of $G$, which we call standard series, consisting of normal subgroups and in which the index between two consecutive terms is always $p$.
To this purpose, we follow \cite{J}, but we give a construction that is a bit more general and does not require the use of elements.
Before giving the definition of these series, we remark the following immediate consequence of
(i) of \cref{powers and comms mod Pn+2} and of \cref{isomorphism free case}.

\begin{lem}
\label{index does not grow}
Let $G$ be a group, and let $L$ and $N$ be normal subgroups of $G$ such that
\[
P_n(G)\le N \le L \le P_{n-1}(G)
\vspace{5pt}
\]
for some $n$.
Then $|L^{p^m}P_{n+m+1}(G):N^{p^m}P_{n+m+1}(G)|\le |L:N|$, and if $G$ is a free group then the equality holds.
\end{lem}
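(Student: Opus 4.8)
The plan is to realise both of the indices appearing in the statement as the size of the image and of a subimage of one and the same group homomorphism, namely the one induced by the $p^m$-th power map. The inequality will then drop out of surjectivity of this homomorphism, and the free-group equality from its injectivity; this matches the advertised role of item (i) of \cref{powers and comms mod Pn+2} for the inequality and of \cref{isomorphism free case} for the equality.

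Concretely, since $L\le P_{n-1}(G)$, I would first invoke item (i) of \cref{powers and comms mod Pn+2} to see that raising to the $p^m$-th power yields a group homomorphism $\phi$ defined on $P_{n-1}(G)$ with values in an appropriate lower $p$-central quotient, given by $x\mapsto x^{p^m}P_{n+m+1}(G)$. Because $\phi$ is a homomorphism, the image of any subgroup $K$ with $P_n(G)\le K\le P_{n-1}(G)$ is a subgroup containing every coset $x^{p^m}P_{n+m+1}(G)$ with $x\in K$; as $K^{p^m}$ is generated by such powers, this image is exactly $K^{p^m}P_{n+m+1}(G)/P_{n+m+1}(G)$. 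Applying this to $K=L$ and $K=N$ and using $N\le L$, the homomorphism $\phi$ descends to a surjection $\overline{\phi}\colon L/N\twoheadrightarrow \phi(L)/\phi(N)$, whose target is canonically isomorphic to $L^{p^m}P_{n+m+1}(G)/N^{p^m}P_{n+m+1}(G)$. Surjectivity then gives $|L^{p^m}P_{n+m+1}(G):N^{p^m}P_{n+m+1}(G)|\le|L:N|$ at once.

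For the free case I would upgrade this surjection to an isomorphism. The map $\overline{\phi}$ is injective exactly when $\ker\phi\le N$, so it suffices to show that any $x\in L$ with $x^{p^m}\in P_{n+m+1}(G)$ already lies in $N$. This is precisely where \cref{isomorphism free case} enters: for a free group $F$ the maps $\pi_{n-1,m}$ and $\pi_{n,m}$ are monomorphisms, so $x^{p^m}\in P_{n+m}(F)$ forces $x\in P_n(F)$, and then $x^{p^m}\in P_{n+m+1}(F)$ forces $x\in P_{n+1}(F)$; in particular $x\in P_n(F)\le N$. Hence $\ker\phi\le N$, the surjection $\overline{\phi}$ is an isomorphism, and the inequality becomes an equality.

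I expect the only real obstacle to be bookkeeping of the lower $p$-central indices. The delicate point is to confirm that the $p^m$-th power map is a genuine homomorphism modulo exactly the term $P_{n+m+1}(G)$ named in the statement, i.e.\ that the modulus produced by \cref{hall-petrescu modulo Q} is fine enough after the shift from $n$ to $n-1$ in the hypothesis $L\le P_{n-1}(G)$; this is where small values of $n$ deserve a careful check. Once the indices are aligned, both halves of the lemma are formal: the inequality is surjectivity of $\phi$, and the equality for free groups is its injectivity via \cref{isomorphism free case}.
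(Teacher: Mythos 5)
The paper offers no written proof of this lemma --- it is presented as an ``immediate consequence'' of \cref{powers and comms mod Pn+2}(i) and \cref{isomorphism free case} --- and your argument is exactly the intended one: realise both indices via the homomorphism $\phi$ induced by the $p^m$-th power map, obtain the inequality from the surjection $L/N\twoheadrightarrow \phi(L)/\phi(N)$, and the free-group equality from $\ker\phi\cap L\le N$ via the injectivity of the maps $\pi$. The identification of $\phi(K)$ with $K^{p^m}P_{n+m+1}(G)/P_{n+m+1}(G)$ is also correct (it is \cref{powers and comms mod Pn+2}(ii)).

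The ``delicate point'' you flag at the end is real, but it is a symptom of an off-by-one in the printed statement rather than a gap in your method. \cref{powers and comms mod Pn+2}(i) applied to $P_{n-1}(G)$ produces a homomorphism into $P_{n+m-1}(G)/P_{n+m}(G)$, i.e.\ with modulus $P_{n+m}(G)$, not $P_{n+m+1}(G)$; to get a homomorphism modulo $P_{n+m+1}(G)$ on all of $P_{n-1}(G)$ one needs $(ab)^{p^m}\equiv a^{p^m}b^{p^m}\pmod{P_{n+m+1}(G)}$ for $a,b\in P_{n-1}(G)$, and \eqref{power modulo P} only yields modulus $P_{2n+m-2}(G)$, which suffices when $n\ge 3$ but not when $n=2$ (for $a,b\in G$ the available congruence is only modulo $P_{m+2}(G)$, one step short of the required $P_{m+3}(G)$). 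The hypotheses and conclusion become consistent if one reads the lemma with $P_{n+1}(G)\le N\le L\le P_n(G)$ and modulus $P_{n+m+1}(G)$ --- equivalently, keeps the printed hypothesis and replaces the modulus by $P_{n+m}(G)$. That shifted version is the one actually used in the proof of \cref{standard series of free}(i), where the lemma is applied to $P_{\ell+1}(F)\le S_{\ell,k}\le P_\ell(F)$ with $m=n-\ell$ and modulus $P_{\ell+m+1}(F)=P_{n+1}(F)$. With the indices so aligned, every step of your argument goes through verbatim: $\phi$ is the homomorphism of \cref{powers and comms mod Pn+2}(i) on $P_n(G)$, it maps $L$ and $N$ onto $L^{p^m}P_{n+m+1}(G)/P_{n+m+1}(G)$ and $N^{p^m}P_{n+m+1}(G)/P_{n+m+1}(G)$ respectively, and in the free case $\ker\phi=P_{n+1}(F)\le N$ by \cref{isomorphism free case}, so the induced surjection $L/N\to\phi(L)/\phi(N)$ is an isomorphism.
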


The idea of the definition of standard series is very simple.
We start from an arbitrary refinement of the inclusion $P_2(G)\le G$ via a chain of normal subgroups of $G$ of maximum length.
By taking $p$th powers and multiplying by $P_3(G)$, which amounts to applying $\pi_{1,1}$, these subgroups give (after deleting repetitions) a chain of the desired type from $P_3(G)$ to $G^pP_3(G)$, that we extend to $P_2(G)$.
By iterating this process, we can refine every step from $P_{n+1}(G)$ to $P_n(G)$ with normal subgroups of $G$ whose consecutive indices are all $p$.
We formalise this idea in the following definition.

\begin{dfn}
Let $G$ be a group and let $p$ be an odd prime.
For every $n\in\N$, we define a series $\mathcal{S}(n)$ of normal subgroups of $G$,
\[
\mathcal{S}(n): P_{n+1}(G)=S_{n,0}\le S_{n,1}\le \cdots \le S_{n,s_G(n)}=P_n(G),
\]
with $|S_{n,k}:S_{n,k-1}|=p$ for $k=1,\ldots,s_G(n)$, as follows:
\begin{enumerate}
\item
The series $\mathcal{S}(1)$ is chosen arbitrarily so as to satisfy the required conditions.
\item
For $n\ge 2$, we consider the series starting at $P_{n+1}(G)$ which consists of the subgroups
\[
\{ L^p P_{n+1}(G) \mid L\in\mathcal{S}(n-1) \},
\]
and we extend it to $P_n(G)$ with normal subgroups of $G$, each of index $p$ in the next one.
\end{enumerate}
Then we say that the series $\mathcal{S}$ that is obtained by concatenating all series $\mathcal{S}(n)$ is a \emph{standard $p$-series\/} of $G$.
\end{dfn}

\begin{pro}
\label{standard series of free}
Let $F$ be a free group of rank $d$ and let $p$ be an odd prime.
If $\mathcal{S}$ is a standard $p$-series of $F$ then, for all $n\in\N$:
\begin{enumerate}
\item
If $n\ge \ell$ and $0\le k\le s_d(\ell)$ then $S_{n,k}=S_{\ell,k}^{p^{n-\ell}}P_{n+1}(F)$.
\item
If $n\ge \ell$ then $S_{n,s_d(\ell)}=Q_{n,\ell}(F)$.
\item
If $k\ge d$ then $S_{n,k+1}^p\le S_{n,k}^p[S_{n,k},F]$.
\end{enumerate}
\end{pro}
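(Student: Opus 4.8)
\emph{Parts (i) and (ii).} I would prove the three parts in order, each bootstrapping from the previous ones; the real work is in part~(iii). For (i) the key is the one-step relation $S_{n,k}=S_{n-1,k}^p\,P_{n+1}(F)$ for $0\le k\le s_d(n-1)$. This is immediate from the construction of $\mathcal S(n)$ once one checks that no repetitions occur: by \cref{isomorphism free case} the $p$th-power map $\pi_{n-1,1}$ is injective, so it carries $\mathcal S(n-1)$ to a chain with the same unit consecutive indices, and by \cref{powers and comms mod Pn+2}(ii) the image of $S_{n-1,k}/P_n(F)$ equals $S_{n-1,k}^pP_{n+1}(F)/P_{n+1}(F)$. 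Iterating this by induction on $n-\ell$ and simplifying $(S_{\ell,k}^{p^{n-1-\ell}}P_n(F))^pP_{n+1}(F)$ with \eqref{exp of product}, \eqref{double exp} and $P_n(F)^p\le P_{n+1}(F)$ yields (i); here I use that $s_d$ is nondecreasing, so that $k\le s_d(\ell)$ keeps every intermediate term in range. For (ii), since $S_{\ell,s_d(\ell)}=P_\ell(F)=Q_{\ell,\ell}(F)$, part~(i) gives $S_{n,s_d(\ell)}=P_\ell(F)^{p^{n-\ell}}P_{n+1}(F)$; expanding $P_\ell(F)=\gamma_1(F)^{p^{\ell-1}}\cdots\gamma_\ell(F)$, all of whose factors lie in $P_\ell(F)$, and distributing the power with \eqref{exp of product} and \eqref{double exp} rewrites the right-hand side as $\gamma_1(F)^{p^{n-1}}\cdots\gamma_\ell(F)^{p^{n-\ell}}P_{n+1}(F)=Q_{n,\ell}(F)$.

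\emph{Part (iii), reduction.} I would argue by induction on $n$, the statement being vacuous for $n=1$ (there $d\le k\le s_d(1)-1=d-1$ is empty). Set $W=S_{n,k}^p[S_{n,k},F]$, so that $S_{n,k}/W$ is central of exponent $p$ in $F/W$ and $P_{n+2}(F)\le W\le P_{n+1}(F)$ (from \cref{BH lemma} and $P_{n+1}(F)\le S_{n,k}\le P_n(F)$). For $n\ge2$ one has $[S_{n,k+1},S_{n,k+1}]\le P_{2n}(F)\le P_{n+2}(F)\le W$, so $S_{n,k+1}/W$ is abelian; writing $S_{n,k+1}=\langle S_{n,k},a\rangle$, this forces $(S_{n,k+1}/W)^p=\langle a^pW\rangle$, and the whole assertion collapses to the single statement $a^p\in W$, for any convenient generator $a$. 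Part~(ii) now provides the two landmarks $S_{n,s_d(n-1)}=Q_{n,n-1}(F)$ and $S_{n,s_d(n)}=P_n(F)=Q_{n,n}(F)$, and I split the range $d\le k\le s_d(n)-1$ accordingly.

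\emph{Part (iii), the two cases.} In the fresh range $s_d(n-1)\le k\le s_d(n)-1$ the generator lies in $\gamma_n(F)Q_{n,n-1}(F)=P_n(F)$, so $a^p\equiv g^p\pmod W$ with $g\in\gamma_n(F)$; by \eqref{Q commutator} we have $\gamma_n(F)^p\le[Q_{n,n-1}(F),F]P_{n+2}(F)\le[S_{n,k},F]P_{n+2}(F)\le W$, giving $a^p\in W$ with no appeal to induction (this already settles the base case $n=2$). In the power range $d\le k\le s_d(n-1)-1$, the one-step relation and injectivity of $\pi_{n-1,1}$ let me take $a=b^p$, where $b$ generates $S_{n-1,k+1}$ over $S_{n-1,k}$. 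The inductive hypothesis gives $b^p\in S_{n-1,k+1}^p\le S_{n-1,k}^p[S_{n-1,k},F]$, say $b^p=sc$ with $s\in S_{n-1,k}^p$ and $c\in[S_{n-1,k},F]$; then $a^p=b^{p^2}=(sc)^p\equiv s^pc^p\pmod{P_{n+2}(F)}$ by the homomorphism in \cref{powers and comms mod Pn+2}(i). Finally $s\in S_{n-1,k}^p\le S_{n,k}$ gives $s^p\in S_{n,k}^p\le W$, while \eqref{exp inside comm} gives $c^p\in[S_{n-1,k}^p,F]P_{n+2}(F)\le[S_{n,k},F]P_{n+2}(F)\le W$, whence $a^p\in W$.

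\emph{Main obstacle.} The crux is the organisation of part~(iii): first recognising that everything reduces to controlling the single power $a^p$, and then seeing that the standard series has two structurally different kinds of steps — those obtained by raising the previous segment to the $p$th power, where that power must be pushed through a \emph{second} time via \eqref{exp inside comm} and the inductive hypothesis, and the finitely many fresh steps filling $Q_{n,n-1}(F)$ up to $P_n(F)$, handled by \eqref{Q commutator}. The hypothesis $k\ge d$ is exactly what guarantees $Q_{n,1}(F)=F^{p^{n-1}}P_{n+1}(F)\le S_{n,k}$, placing $a$ above the free-generator powers; for $k<d$ the generator has the form $x_i^{p^{n-1}}$ and its $p$th power $x_i^{p^n}$ escapes $W$, the linear independence in \cref{linearly independence} confirming that the statement genuinely fails there.
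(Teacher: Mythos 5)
Your proof is correct and follows essentially the same route as the paper: parts (i) and (ii) are handled identically, and for (iii) both arguments locate the critical level $\ell$ with $s_d(\ell-1)\le k<s_d(\ell)$, establish the inclusion there via \cref{special subgroups} (your $\gamma_n(F)^p\le[Q_{n,n-1}(F),F]P_{n+2}(F)$ is precisely \eqref{Q commutator}), and then transport it upwards using \cref{powers and comms mod Pn+2}. The only difference is organisational: the paper raises the subgroup inclusion $S_{\ell,k+1}^p\le S_{\ell,k}^p[S_{\ell,k},F]$ to the $p^{n-\ell}$th power in a single step, whereas you reduce to one generator $a$ and climb one level at a time by induction on $n$ — both rest on the same identities \eqref{exp of product}, \eqref{double exp} and \eqref{exp inside comm}.
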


\begin{proof}
(i)
By \cref{index does not grow}, we have
\[
|S_{\ell,k}^{p^{n-\ell}}P_{n+1}(F):P_{n+1}(F)| = |S_{\ell,k}:P_{\ell+1}(F)| = p^k.
\]
Consequently $S_{\ell,k}^{p^{n-\ell}}P_{n+1}(F)=S_{n,k}$.

(ii)
If $n=\ell$ then $S_{\ell,s_d(\ell)}=P_{\ell}(F)=Q_{\ell,\ell}(F)$.
In the general case, by (i) we have
\begin{align*}
S_{n,s_d(\ell)}
&= S_{\ell,s_d(\ell)}^{p^{n-\ell}}P_{n+1}(F) = P_{\ell}(F)^{p^{n-\ell}}P_{n+1}(F)
\\
&=
\Big( \prod_{i=1}^{\ell} \, \gamma_i(G)^{p^{\ell-i}} \Big)^{p^{n-\ell}} \, P_{n+1}(F)
\\
&=
\Big( \prod_{i=1}^{\ell} \, \gamma_i(G)^{p^{n-i}} \Big) \, P_{n+1}(F)
=
Q_{n,\ell}(F),
\end{align*}
where the second to last equality follows from \cref{powers and comms mod Pn+2}.

(iii)
Since $k\ge d=s_d(1)$, we can consider the minimum $\ell$ such that $s_d(\ell-1)\le k<s_d(\ell)$.
Then
\[
S_{\ell,k+1}^p \le P_{\ell}(F)^p \le Q_{\ell+1,\ell}(F) = Q_{\ell,\ell-1}(F)^p [Q_{\ell,\ell-1}(F),F],
\]
by using \cref{special subgroups}.
On the other hand,
\[
Q_{\ell,\ell-1}(F) = S_{\ell,s_d(\ell-1)} \le S_{\ell,k} 
\]
by using (ii) and that $k\ge s_d(\ell-1)$.
Hence
\[
S_{\ell,k+1}^p \le S_{\ell,k}^p [S_{\ell,k},F].
\]
Now, by (i) and \cref{powers and comms mod Pn+2},
\begin{align*}
S_{n,k+1}^p
&=
(S_{\ell,k+1}^{p^{n-\ell}} P_{n+1}(F))^p
\le
(S_{\ell,k+1}^p)^{p^{n-\ell}} P_{n+2}(F)
\\
&\le
(S_{\ell,k}^p [S_{\ell,k},F])^{p^{n-\ell}} P_{n+2}(F)
=
(S_{\ell,k}^{p^{n-\ell}})^p [S_{\ell,k}^{p^{n-\ell}},F] P_{n+2}(F)
\\
&=
S_{n,k}^p [S_{n,k},F] P_{n+2}(F).
\end{align*}
Now, since $P_{n+2}(F)=P_{n+1}(F)^p [P_{n+1}(F),F] \le S_{n,k}^p [S_{n,k},F]$, we conclude
that $S_{n,k+1}^p\le S_{n,k}^p [S_{n,k},F]$, as desired.
\end{proof}

\section{Finding Beauville and non-Beauville $p$-groups}
\label{sec:stdbeau}

In the remainder, $p$ stands for a prime number greater than or equal to $5$.
Let $F=\langle x, y\rangle$ be the free group of rank $2$, and for an arbitrary non-negative integer $a$,
let us consider the subgroup
\[
H=\langle x^{p^a}, y \rangle^F= \langle x^{p^a}, y, y^{x}, \dots, y^{x^{p^a-1}} \rangle.
\]
Then $F/H$ is a cyclic group of order $p^a$ and $H$ is a free group in the generators given above, so free of rank $p^a+1$, by Proposition 18 in \cite{FGJ}.
Even if not explicitly mentioned in the statements of the results, $F$ and $H$ will denote these groups in the remainder of the paper.

Let us write $U_n$ for $Q_{n,2}(H)$.
In other words,
\[
U_n=H^{p^{n-1}}\gamma_2(H)^{p^{n-2}}P_{n+1}(H)
\]
for every $n\ge 2$.
Thus $U_2=P_2(H)$ and $P_{n+1}(H)<U_n<P_n(H)$ for $n\ge 3$.

The main result of this section is the determination of when a quotient $F/W$ is a Beauville group for all normal subgroups $W$ of $F$ lying between $U_{n+1}$ and $P_n(H)$.
We start by considering the power structure of quotients $F/N$ with $U_n\leq N \leq P_n(H)$.

\begin{lem}
\label{orders of generators}
Suppose that $n\ge 2$ and $U_n\leq N \leq P_n(H)$.
Then:
\begin{enumerate}
\item
$y$ has order $p^{n-1}$ modulo $N$.
\item
$xy^j$ has order $p^{a+n-1}$ modulo $N$ for all $j\in\Z$.
\end{enumerate}
\end{lem}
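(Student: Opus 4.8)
The plan is to isolate a single auxiliary fact and deduce both statements from it. The fact is: for every $h\in H$ with $h\notin P_2(H)=H^p\gamma_2(H)$, the order of $h$ modulo $N$ is exactly $p^{n-1}$, for all $N$ with $U_n\le N\le P_n(H)$. The upper bound holds for every $h\in H$, uniformly: since $h^{p^{n-1}}\in H^{p^{n-1}}\le U_n\le N$, the order of $hN$ divides $p^{n-1}$. For the matching lower bound I would exploit that $H$ is free. By \cref{huppert bijection} applied to $H$ (legitimate as $p$ is odd), the isomorphism $f_{n-1}\colon\mathfrak{D}_{n-1}(H)\to P_{n-1}(H)/P_n(H)$ sends $(\bar h,0,\dots,0)$ to $h^{p^{n-2}}P_n(H)$; since $h\notin P_2(H)$ the first coordinate $\bar h$ is nonzero, so $h^{p^{n-2}}\notin P_n(H)\supseteq N$. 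Hence the order of $hN$ does not divide $p^{n-2}$, and being a power of $p$ that divides $p^{n-1}$, it must equal $p^{n-1}$.

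Part (i) is then immediate: $y$ is one of the free generators of $H$, so $y\notin P_2(H)$, and the auxiliary fact gives order $p^{n-1}$. For part (ii) I would first reduce the order of $xy^j$ modulo $N$ to that of its $p^a$th power. Writing $w=xy^j$, we have $w^{p^a}\in H$ because $wH=xH$ generates $F/H\cong C_{p^a}$. If I can show $w^{p^a}\in H\setminus P_2(H)$, the auxiliary fact says $w^{p^a}$ has order $p^{n-1}$ modulo $N$, i.e. $w^{p^{a+n-1}}=(w^{p^a})^{p^{n-1}}\in N$ while $w^{p^{a+n-2}}=(w^{p^a})^{p^{n-2}}\notin N$; as the order of $wN$ is a power of $p$, this forces it to be $p^{a+n-1}$.

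Everything thus comes down to locating $w^{p^a}=(xy^j)^{p^a}$ modulo $P_2(H)$, which I regard as the main (though still routine) step. Collecting the $p^a$ copies of $x$ to the left via the identity $(xg)^{m}=x^{m}\,g^{x^{m-1}}\cdots g^{x}g$ (a one-line induction), with $g=y^j$ and $m=p^a$, and recalling that $y^{x^i}$ for $0\le i\le p^a-1$ are precisely the remaining free generators of $H$, I obtain
\[
(xy^j)^{p^a}=x^{p^a}\,(y^{x^{p^a-1}})^{j}\cdots (y^{x})^{j}\,y^{j}.
\]
Since $x^{p^a},y,y^{x},\dots,y^{x^{p^a-1}}$ is exactly the free generating set of $H$, the coordinate of $x^{p^a}$ in the image of this element in the $\F_p$-vector space $H/P_2(H)$ equals $1$; in particular $(xy^j)^{p^a}\notin P_2(H)$, which closes part (ii). The only genuine care needed is the bookkeeping in the collection formula together with the observation that $x^{p^a}$ survives with coefficient $1$ in the abelianised mod-$p$ quotient while the conjugates of $y$ contribute only to other coordinates; beyond this there is no real obstacle, since the free-group input has already been packaged into \cref{huppert bijection}.
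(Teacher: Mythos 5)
Your proof is correct, but it reaches the lower bounds on the orders by a genuinely different route than the paper. The collection formula for $(xy^j)^{p^a}$ is the same in both arguments; the divergence is in how one certifies that $(xy^j)^{p^{a+n-2}}\notin N$. The paper pushes the $p^{n-2}$nd power through the product modulo $U_n$ using \cref{pn-2 powers} and then invokes the linear independence of the $p^{n-2}$nd powers of the free generators of $H$ modulo $N$ (\cref{linearly independence}); you instead work one level up, modulo $P_n(H)\supseteq N$, and reduce everything to the single fact that $h\notin P_2(H)$ implies $h^{p^{n-2}}\notin P_n(H)$, which is exactly the injectivity of $\pi_{1,n-2}$ for the free group $H$ (\cref{huppert bijection}, \cref{isomorphism free case}). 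Your auxiliary fact is cleaner and slightly more general: it applies to every $h\in H\smallsetminus P_2(H)$, and its lower-bound half uses only $N\le P_n(H)$, with the hypothesis $U_n\le N$ entering only through the upper bound $H^{p^{n-1}}\le U_n\le N$. What the paper's heavier computation buys is the explicit congruence \eqref{pre-key congruence}, which records the full coordinate vector of $(xy^j)^{p^{a+n-2}}$ modulo $N$ and is reused immediately in \cref{conjugation by x} to separate the various minimal subgroups and to prove centrality; your argument, which only tracks the $x^{p^a}$-coordinate in $H/P_2(H)$, proves the lemma but would have to be supplemented before the subsequent results could be derived from it.
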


\begin{proof}
Since $H$ is a free group, it follows from \cref{linearly independence} that $x$ and $y$ have orders $p^{a+n-1}$ and $p^{n-1}$ modulo $N$, respectively.
Now consider the element $xy^{j}$.
We have
\[
(xy^j)^{p^a}=x^{p^a}(y^{x^{p^a-1}})^{j}(y^{x^{p^a-2}})^ j\dots (y^x)^{j}y^j \in H.
\]
Since $U_n \leq N$, by applying \eqref{pn-2 powers-1}, we get
\begin{equation}
\label{pre-key congruence}
(xy^j)^{p^{a+n-2}}
\equiv
x^{p^{a+n-2}} (y^{x^{p^a-1}})^{jp^{n-2}} \dots (y^{x})^{jp^{n-2}}y^{jp^{n-2}}
\pmod {N}.
\end{equation}
Now, by \cref{linearly independence}, all powers
\[ 
x^{p^{a+n-2}}, \ ( y^{x^{p^a-1}})^{p^{n-2}},  \dots,  (y^{x})^{p^{n-2}}, y^{p^{n-2}}
\]
are linearly independent modulo $N$, and this implies that $(xy^j)^{p^{a+n-2}} \not \in N$.
On the other hand, since
\[
\exp F/N \le \exp F/H \cdot \exp H/N \le p^{a+n-1},
\]
we have $(xy^j)^{p^{a+n-1}} \in N$.
We conclude that the order of $xy^j$ modulo $N$ is $p^{a+n-1}$.
\end{proof}

Hence, for $n\ge 2$, the minimal subgroups of $\langle y \rangle$ and $\langle xy^j \rangle$ modulo
$N$ correspond to $\langle y^{p^{n-2}} \rangle$ and $\langle (xy^j)^{p^{a+n-2}} \rangle$, respectively.
We next study the behaviour of these subgroups.

\begin{lem}
\label{conjugation by x}
Suppose that $n\ge 2$ and $U_n\leq N \leq P_n(H)$.
Then:
\vspace{3pt}
\begin{enumerate}
\setlength\itemsep{7pt}
\item
The subgroups $\langle (xy^j)^{p^{a+n-2}} \rangle$ and $\langle (xy^k)^{p^{a+n-2}} \rangle$ are different modulo $N$ if $j\not\equiv k \pmod p$, and they are also all different from
$\langle y^{p^{n-2}} \rangle$.
\item
The subgroup $\langle (xy^j)^{p^{a+n-2}} \rangle$ is central in $F$ modulo $N$ for all $j\in\Z$.
\end{enumerate}
\end{lem}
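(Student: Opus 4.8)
The plan is to reduce both statements to linear algebra in a single $\F_p$-vector space. Recall from the proof of \cref{linearly independence}, applied to the free group $H$ with its free generators $x^{p^a}, y, y^x, \dots, y^{x^{p^a-1}}$, that $V := H^{p^{n-2}}N/N$ is a central section of $H$ of exponent $p$, hence an $\F_p$-vector space (written additively), in which the $p^{n-2}$nd powers of the free generators of $H$, namely
\[
e_{-1} = x^{p^{a+n-2}}N \quad\text{and}\quad e_i = (y^{x^i})^{p^{n-2}}N \ \ (0 \le i \le p^a-1),
\]
are linearly independent. Since $(xy^j)^{p^a} \in H$, the element $v_j := (xy^j)^{p^{a+n-2}}N$ lies in $V$, and by the congruence \eqref{pre-key congruence} it has the explicit coordinates $v_j = e_{-1} + j w$, where $w = \sum_{i=0}^{p^a-1} e_i$; likewise $y^{p^{n-2}}N = e_0$.

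First I would prove (i). By \cref{orders of generators} all the elements $(xy^j)^{p^{a+n-2}}$ and $y^{p^{n-2}}$ have order $p$ modulo $N$, so the subgroups in question correspond exactly to the lines $\langle v_j \rangle$ and $\langle e_0 \rangle$ of $V$. The vectors $e_{-1}$ and $w$ are linearly independent ($e_{-1}$ is a basis vector not occurring in $w$), so $v_j = e_{-1} + jw$ and $v_k = e_{-1} + kw$ span the same line precisely when $j \equiv k \pmod p$, which settles the first claim. Since each $v_j$ has nonzero $e_{-1}$-coordinate whereas $e_0$ does not, no line $\langle v_j \rangle$ equals $\langle e_0 \rangle$, giving the second.

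For (ii) the key observation is that $z := (xy^j)^{p^{a+n-2}}$ is a power of $xy^j$, so $[z, xy^j] = 1$; hence it suffices to show that $z$ commutes with $y$ modulo $N$, for then it also commutes with $x = (xy^j)y^{-j}$ and so is central in $F/N$. Writing $z = t^{p^{n-2}}$ with $t = (xy^j)^{p^a} \in H = P_1(H)$ and applying \eqref{power-comm modulo P} inside $H$ (with $n = 1$ and $m = n-2$), I obtain
\[
[z, y] \equiv [t, y]^{p^{n-2}} \pmod{P_{n+1}(H)}.
\]
Now $[t, y] \in \gamma_2(H)$, so $[t,y]^{p^{n-2}} \in \gamma_2(H)^{p^{n-2}} \le U_n \le N$, and also $P_{n+1}(H) \le U_n \le N$; therefore $[z, y] \in N$, as needed.

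The two vector-space verifications in (i) are routine. The point that requires care, and which I regard as the crux, is the reduction in (ii): by exploiting $[z, xy^j] = 1$ one avoids commuting $z$ directly past $x$ (where the conjugation action of $x$ on the free generators of $H$ would interfere) and instead only commutes past $y \in H$, where \eqref{power-comm modulo P} applies cleanly and the error term lands inside $\gamma_2(H)^{p^{n-2}} \le U_n$.
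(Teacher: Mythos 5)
Your proof is correct. Part (i) is essentially the paper's argument: both reduce to the linear independence, guaranteed by \cref{linearly independence} applied to the free group $H$, of the $p^{n-2}$nd powers of the free generators $x^{p^a},y,y^x,\ldots,y^{x^{p^a-1}}$ modulo $N$, combined with the explicit coordinates supplied by \eqref{pre-key congruence}; you merely write out the two one-line verifications that the paper dismisses as immediate. Part (ii), however, takes a genuinely different route. The paper first shows that \emph{all} $p^{n-2}$nd powers of elements of $H$ are central in $H$ modulo $N$, via $[H^{p^{n-2}},H]\le[Q_{n-1,1}(H),H]\le U_n\le N$ and \cref{special subgroups}, and then handles conjugation by $x$ by observing that the right-hand side of \eqref{key congruence} is invariant under the cyclic shift that $x$ induces on the factors, using $(y^{p^{n-2}})^{x^{p^a}}\equiv y^{p^{n-2}}\pmod N$. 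You instead exploit the tautology $[z,xy^j]=1$ to reduce the whole question to the single commutator $[z,y]$, which you control by applying \eqref{power-comm modulo P} inside $H$ to $t=(xy^j)^{p^a}\in P_1(H)$; both error terms, $[t,y]^{p^{n-2}}\in\gamma_2(H)^{p^{n-2}}$ and $P_{n+1}(H)$, land in $U_n\le N$, and commuting with $xy^j$ and with $y$ modulo the normal subgroup $N$ gives commuting with $x=(xy^j)y^{-j}$. Your version is slightly more economical, avoiding both the cyclic-shift computation and the centrality of $H^{p^{n-2}}$ in $H$ modulo $N$; the trade-off is that the paper's stronger intermediate statement is of the kind that gets reused in the subsequent analysis of $V_n$, whereas your argument proves only what the lemma literally requires. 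Both are complete and correct.
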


\begin{proof}
First of all, observe that, since $x^{p^a},y,y^x,\ldots,y^{x^{p^a-1}}$ are free generators of $H$,
their $p^{n-2}$nd powers are linearly independent modulo $N$ by \cref{linearly independence}.
Hence (i) follows immediately from \eqref{pre-key congruence}.

Let us now prove (ii).
On the one hand, observe that all $p^{n-2}$nd powers of elements of $H$ are central in $H$ modulo $N$, since 
\[
[H^{p^{n-2}},H]\le [Q_{n-1,1}(H),H]\le U_n \le N
\]
by \cref{special subgroups}.
Thus we only have to see that $(xy^j)^{p^{a+n-2}}$ commutes with $x$ modulo $N$.
Now, again by \eqref{pre-key congruence}, we have
\begin{equation}
\label{key congruence}
(xy^j)^{p^{a+n-2}}
\equiv
x^{p^{a+n-2}} \ \Big( (y^{x^{p^a-1}})^{p^{n-2}} \dots (y^{x})^{p^{n-2}}y^{p^{n-2}} \Big)^j
\pmod {N},
\end{equation}
and this element is invariant under conjugation by $x$, since
\[
(y^{x^{p^a}})^{p^{n-2}}  = (y^{p^{n-2}})^{x^{p^a}} \equiv y^{p^{n-2}} \pmod{N}.
\]
\end{proof}

At this point, we can already prove that all quotients $F/N$ with $U_n\le N \le P_n(H)$ are Beauville groups.

\begin{pro}
\label{F/N Beauville}
Let $N$ be a normal subgroup of $F$ such that $U_n \le N \le P_n(H)$, where $n\ge 2$.
Then the factor group $F/N$ is a Beauville group.
\end{pro}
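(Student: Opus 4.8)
The plan is to produce an explicit Beauville structure $(T_1,T_2)$ and to verify \eqref{beauville condition} directly. Since $G=F/N$ is a finite $p$-group, a cyclic subgroup has a unique minimal subgroup (its socle), two cyclic subgroups meet trivially exactly when their socles are distinct, and the socle of $\langle t\rangle^g$ is the $g$-conjugate of the socle of $\langle t\rangle$. Hence \eqref{beauville condition} for two spherical generating triples $T_1,T_2$ is equivalent to the purely group-theoretic requirement that \emph{no socle of a component of $T_2$ is $G$-conjugate to a socle of a component of $T_1$}. So I only need to locate the socles of the relevant generators and control their conjugacy classes.

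From \cref{orders of generators} and \cref{conjugation by x}, the generator $xy^j$ contributes the \emph{central} socle $C_j:=\langle (xy^j)^{p^{a+n-2}}\rangle$, while $y$ contributes $D_0:=\langle y^{p^{n-2}}\rangle$. By \eqref{key congruence} together with \cref{linearly independence}, the lines $C_0,\dots,C_{p-1}$ are pairwise distinct and all distinct from $D_0$ and from every conjugate $D_i:=\langle (y^{x^i})^{p^{n-2}}\rangle$; since $y^{p^{n-2}}$ is central in $H$ modulo $N$, conjugation acts on the $D_i$ through $F/H\cong C_{p^a}$, so the $D_i$ form a single conjugacy class of non-central socles. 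This gives the guiding constraint: two $D$-type socles are automatically conjugate, so at most one of $T_1,T_2$ may carry a non-central component, and the other triple must have all three socles central.

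Accordingly I take $T_1=(x,y,(xy)^{-1})$, with socles $C_0,D_0,C_1$, and I take $T_2$ to be an all-central generating triple, for instance $T_2=(xy^2,xy^3,(xy^2\,xy^3)^{-1})$. Two of its components are of the form $xy^j$, with central socles $C_2,C_3$, but the third is $xy^2\,xy^3=x^2(y^x)^2y^3$, which has $\bar x$-coordinate $2\not\equiv 0\pmod p$ and is not of the simple shape $xy^j$, so its socle is not provided by the earlier lemmas. \textbf{This is the main obstacle}: I must show that every $g\in F$ whose image in $G/\Phi(G)$ has $\bar x$-coordinate $c$ coprime to $p$ has order $p^{a+n-1}$ modulo $N$ and central socle $\langle g^{p^{a+n-2}}\rangle$ of $C$-type. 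My plan is to reduce to the computed case: writing $\bar g=c\bar x+\beta\bar y$, one has $g\equiv (xy^{\beta/c})^{c}\pmod{\gamma_2(F)}$, and both $g^{p^a}$ and $\big((xy^{\beta/c})^{c}\big)^{p^a}$ lie in $H$. The crux is to verify that these two elements agree modulo $P_2(H)=H^p\gamma_2(H)$, i.e. that the image of $g^{p^a}$ in $H^{\mathrm{ab}}\otimes\F_p$ equals $c\,\overline{x^{p^a}}+\beta\sum_{i=0}^{p^a-1}\overline{y^{x^i}}$; the $\langle x\rangle$-symmetry of the $y$-part, produced by raising to the $p^a$-th power and generalising \eqref{pre-key congruence}, is exactly what forces the socle to be central. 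Granting this, \eqref{pn-2 powers-2} applied inside $H$ (using $U_n\le N$) yields $g^{p^{a+n-2}}\equiv (xy^{\beta/c})^{c\,p^{a+n-2}}\pmod N$, and since $\gcd(c,p)=1$ this generates $C_{\beta/c}$, central by \cref{conjugation by x}.

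With this in hand the verification is immediate. All $C$-type socles are central, hence fixed by conjugation, while the single $D$-type socle $D_0$ of $T_1$ is conjugate-disjoint from every $C_j$ by the second paragraph; so \eqref{beauville condition} holds as soon as the central socles occurring in $T_2$ are distinct from one another and from $C_0,C_1$. Since $p\ge 5$ there are at least the five distinct central lines $C_0,\dots,C_4$ available, leaving ample room to arrange this (replacing $xy^2,xy^3$ by $xy^{s},xy^{t}$ with suitable $s,t$ if the third socle happens to collide). Both triples are spherical and generate $G$, as their components already span $G/\Phi(G)$; hence $(T_1,T_2)$ is a Beauville structure and $F/N$ is a Beauville group. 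When $a=0$ one has $H=F$, conjugation is trivial and every socle is central, so the argument degenerates to exhibiting six distinct lines in $\P^1(\F_p)$, which exist precisely because $p\ge 5$.
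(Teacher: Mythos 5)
Your overall strategy --- exhibit two explicit spherical triples and separate the socles of their components using \cref{orders of generators}, \cref{conjugation by x} and \cref{linearly independence} --- is exactly the paper's. The difference lies in the choice of triples, and your choice creates a difficulty that the paper's choice is engineered to avoid. The paper takes $T_1=(x^{-2},xy,x^{-1}y)$ and $T_2=(xy^2,y,xy^3)$: every component generates, up to conjugation, inversion and a power coprime to $p$, one of the cyclic subgroups $\langle x\rangle$, $\langle y\rangle$ or $\langle xy^j\rangle$ (note that $x^{-2}\cdot xy=x^{-1}y$ is the inverse of $(xy^{-1})^{y}$, and $xy^2\cdot y=xy^3$), so the socles are $C_0,C_1,C_{-1}$ versus $C_2,D_0,C_3$ and everything needed is already contained in the two cited lemmas. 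By placing $x^{\pm2}$ and both ``$xy^{\pm1}$-type'' generators in $T_1$ and putting $y$ in $T_2$, the paper never has to compute the socle of an element of $x$-degree $2$.

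Your proposal, by contrast, hinges on the claim that any $g$ with $\bar g=c\bar x+\beta\bar y$ and $p\nmid c$ has order $p^{a+n-1}$ modulo $N$ and central socle $C_{\beta/c}$; you correctly flag this as the crux and then write ``granting this'', which is a genuine gap as the proof stands. The claim is true, and your sketch points the right way, but the step $g^{p^a}\equiv\bigl((xy^{\beta/c})^c\bigr)^{p^a}\pmod{P_2(H)}$ does not follow formally from $g\equiv(xy^{\beta/c})^c\pmod{\gamma_2(F)}$: one must actually prove that $w^{x^{p^a-1}}\cdots w^{x}w\in P_2(H)$ for every $w\in F'$, which the paper does separately (\cref{conjugates of elements in F'}) and then packages as \cref{pa-th powers} --- results it only needs later, for \cref{bijection}, precisely because its triples for the present proposition dodge the issue. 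So either prove those two facts before invoking them, or switch to triples all of whose components reduce to conjugates of powers of $x$, $y$ or $xy^j$, after which your socle bookkeeping goes through verbatim. Note also that, even after the gap is filled, your concrete $T_2=(xy^2,xy^3,\ast)$ has third socle $C_{(2+3)/2}$, which for $p=5$ equals $C_0$ and collides with the socle of $x$ in your $T_1$; the parenthetical about adjusting the exponents is therefore not optional.
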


\begin{proof}
Set $T_1=(x^{-2}, xy, x^{-1}y)$ and $T_2=(xy^2, y, xy^3)$.
We claim that the images $\overline{T}_1$, $\overline{T}_2$ of these tuples in $\overline F=F/N$ yield a Beauville structure.
Thus we have to prove that
$\langle \overline t_1^{\,\overline g} \rangle \cap \langle \overline t_2 \rangle = \overline 1$ for all
$t_1\in T_1$, $t_2\in T_2$, and $g\in F$.

Observe that $x^{-1}y$ is the inverse of $(xy^{-1})^y$.
Then for every $t_1\in T_1$, the order of $\overline t_1$ is $p^{a+n-1}$ by \cref{orders of generators}, and $\overline t_1^{\,p^{a+n-2}}$ is central by \cref{conjugation by x}.
As a consequence, the minimal subgroup of $\langle \overline t_1^{\,\overline g} \rangle$ is
$\langle \overline t_1^{\,p^{a+n-2}} \rangle$ for every $t_1\in T_1$.
On the other hand, if $t_2\in T_2$ then the minimal subgroup of $\langle \overline t_2 \rangle$ is either
$\langle \overline t_2^{\,p^{a+n-2}} \rangle$ if $t_2\ne y$, or $\langle \overline y^{\,p^{n-2}} \rangle$ otherwise. 
Now, by \cref{conjugation by x} and taking into account that $p\ge 5$, the minimal subgroups of
$\langle \overline t_1^{\,\overline g} \rangle$ and $\langle \overline t_2 \rangle$ do not coincide.
This proves that $\langle \overline t_1^{\,\overline g} \rangle \cap \langle \overline t_2 \rangle = 1$, as desired.
\end{proof}

Determining more generally which factor groups $F/W$ with $U_{n+1}\le W\le P_n(H)$ are Beauville will require a much more detailed analysis.
A fundamental role will be played by the subgroup
\[
V_n
=
\langle x^{p^{a+n-1}}, (y^{x^{p^a-1}} \ldots y^x y)^{p^{n-1}} \rangle \, U_{n+1}.
\]
Note that by \eqref{key congruence}, and taking into account that $p^{n-1}$st powers of elements of $H$ are central in $H$ modulo $U_{n+1}$, we get
\[
(xy^j)^{p^{a+n-1}} 
\equiv
x^{p^{a+n-1}} \big( y^{x^{p^a-1}} \ldots y^x y \big)^{jp^{n-1}}
\pmod {U_{n+1}}.
\]
Thus, we also have
\[
V_n
=
\langle (xy^j)^{p^{a+n-1}} \mid j\in\Z \rangle \, U_{n+1}.
\]
Observe that $U_{n+1}\le V_n\le U_n$ and that, by \cref{conjugation by x}(ii),
$V_n/U_{n+1}\le  Z(F/U_{n+1})$.
In the next lemma we give more precise information about the relation of $V_n$ with the centre of
$F/U_{n+1}$.
For simplicity, we will write $z$ for $y^{x^{p^a-1}} \ldots y^x y$ in the remainder of this section.

\begin{pro}
\label{Vn}
For every $n\ge 2$, we have:
\begin{enumerate}
\item
$V_n/U_{n+1}=H^{p^{n-1}}U_{n+1}/U_{n+1}\cap Z(F/U_{n+1})$.
\item
$V_n/U_{n+1}$ is elementary abelian of order $p^2$.
\end{enumerate}
\end{pro}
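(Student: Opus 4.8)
The plan is to realise both statements as a single computation of a fixed-point subspace. I would work inside $W := H^{p^{n-1}}U_{n+1}/U_{n+1}$, regarded as a module for the conjugation action of $F$, and show that $V_n/U_{n+1}$ is exactly the subspace of $F$-fixed points; since this subspace will turn out to be two-dimensional over $\F_p$, both (i) and (ii) follow at once.

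First I would pin down $W$ as a vector space. Writing $h_0=x^{p^a}, h_1=y, h_2=y^x, \dots, h_{p^a}=y^{x^{p^a-1}}$ for the free generators of $H$ (so that $H$ has rank $d=p^a+1$), \cref{special subgroups} with $k=1$ gives $Q_{n,1}(H)^p[Q_{n,1}(H),H]=Q_{n+1,2}(H)=U_{n+1}$; since $H^{p^{n-1}}\le Q_{n,1}(H)$, both $(H^{p^{n-1}})^p$ and $[H^{p^{n-1}},H]$ lie in $U_{n+1}$, so $W$ is elementary abelian and centralised by $H$ modulo $U_{n+1}$. By \cref{linearly independence} applied to $H$ with $N=U_{n+1}$, the images $h_0^{p^{n-1}},\dots,h_{p^a}^{p^{n-1}}$ are linearly independent, and by the homomorphism property $(gh)^{p^{n-1}}\equiv g^{p^{n-1}}h^{p^{n-1}}\pmod{U_{n+1}}$ (the case of \cref{pn-2 powers} with $n$ replaced by $n+1$) they span $W$. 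Hence they form a basis and $\dim_{\F_p}W=p^a+1$.

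Next I would reduce centrality in $F$ to a single generator. As $W$ is centralised by $H$ and $F=\langle x\rangle H$, the conjugation action of $F$ on $W$ factors through $\langle x\rangle$, and an element of $W$ lies in $Z(F/U_{n+1})$ precisely when it is fixed by $x$. I would then compute this action on the basis: conjugation by $x$ fixes $h_0^{p^{n-1}}=x^{p^{a+n-1}}$ and carries $h_i^{p^{n-1}}$ to $h_{i+1}^{p^{n-1}}$ for $1\le i<p^a$, while $h_{p^a}^{p^{n-1}}=(y^{x^{p^a-1}})^{p^{n-1}}$ is sent to $(y^{x^{p^a}})^{p^{n-1}}=(y^{p^{n-1}})^{x^{p^a}}\equiv y^{p^{n-1}}=h_1^{p^{n-1}}$, the last congruence holding because $x^{p^a}=h_0\in H$ centralises $W$. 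Thus $x$ acts trivially on $h_0^{p^{n-1}}$ and as a single $p^a$-cycle on $h_1^{p^{n-1}},\dots,h_{p^a}^{p^{n-1}}$.

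Finally I would read off the fixed space. The $x$-fixed vectors in the span of $h_1^{p^{n-1}},\dots,h_{p^a}^{p^{n-1}}$ are exactly the constant ones, that is, the scalar multiples of $h_1^{p^{n-1}}\cdots h_{p^a}^{p^{n-1}}\equiv z^{p^{n-1}}$, and $h_0^{p^{n-1}}=x^{p^{a+n-1}}$ is fixed; hence the fixed subspace is spanned by $x^{p^{a+n-1}}$ and $z^{p^{n-1}}$ and has dimension two. By definition this subspace is $V_n/U_{n+1}$, which gives (i), and being a two-dimensional $\F_p$-space it is elementary abelian of order $p^2$, which gives (ii). The main obstacle I foresee is the bookkeeping in the action computation, specifically the wrap-around term $h_{p^a}^x=y^{x^{p^a}}$: it is the collapse of conjugation by $x^{p^a}\in H$ via the centrality of $W$ in $H$ that turns the shift into a genuine $p^a$-cycle and thereby forces the fixed space to be exactly two-dimensional rather than larger.
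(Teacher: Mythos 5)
Your proof is correct and follows essentially the same route as the paper: you identify $H^{p^{n-1}}U_{n+1}/U_{n+1}$ as an elementary abelian section on which $H$ acts trivially, take the same basis of $p^{n-1}$st powers of the free generators of $H$ (linearly independent by \cref{linearly independence}), and reduce membership in $Z(F/U_{n+1})$ to being fixed by $x$. The only difference is the last step, where the paper decomposes the module as trivial $\oplus$ free of rank one over $\F_p[C_{p^a}]$ and cites the uniqueness of its composition series, whereas you compute the fixed space of the $p^a$-cycle on the basis directly; this is a perfectly valid, slightly more elementary substitute for the same conclusion.
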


\begin{proof}
By \eqref{pn-2 powers-1}, we have
\[
H^{p^{n-1}}U_{n+1}
=
\langle x^{p^{a+n-1}}, (y^{p^{n-1}})^{x^j} \mid j=0,\ldots,p^a-1 \rangle \, U_{n+1}.
\]
On the other hand, \cref{powers and comms mod Pn+2} yields
\[
(H^{p^{n-1}})^p\le H^{p^n} P_{n+2}(H)\le U_{n+1}
\]
and
\[
[H^{p^{n-1}},H]\le \gamma_2(H)^{p^{n-1}} P_{n+2}(H)\le U_{n+1}.
\]
It follows that $H^{p^{n-1}}U_{n+1}/U_{n+1}$ is an elementary abelian $p$-group on which $H$ acts trivially by conjugation.
As a consequence, if we set $C=F/H=\langle xH \rangle \cong C_{p^a}$ then
$H^{p^{n-1}}U_{n+1}/U_{n+1}$ is an $\F_pC$-module.

Now, by \cref{linearly independence}, the elements $x^{p^{a+n-1}}$ and $(y^{p^{n-1}})^{x^j}$, for $j=0,\ldots,p^a-1$, are linearly independent in $H^{p^{n-1}}U_{n+1}/U_{n+1}$.
Thus we have the following decomposition as a direct sum of $\F_pC$-modules:
\begin{equation}
\label{direct sum}
H^{p^{n-1}}U_{n+1}/U_{n+1}
=
\langle x^{p^{a+n-1}}U_{n+1} \rangle \oplus  \langle (y^{p^{n-1}}U_{n+1})^{x^j} \mid j=0,\ldots,p^a-1 \rangle,
\end{equation}
where the first direct summand is a trivial $\F_pC$-module and the second is free of rank $1$, so isomorphic to $\F_pC$.

Observe that $H^{p^{n-1}}U_{n+1}/U_{n+1}\cap Z(F/U_{n+1})$ is nothing but the maximal trivial
$\F_pC$-submodule of $H^{p^{n-1}}U_{n+1}/U_{n+1}$.
According to Proposition 6.1.1 and Theorem 6.1.2 of \cite{We}, the group algebra $\F_pC$ has a unique composition series, and as a consequence its maximal trivial submodule is of $\F_p$-dimension $1$.
Then by \eqref{direct sum}, $H^{p^{n-1}}U_{n+1}/U_{n+1}\cap Z(F/U_{n+1})$ is of order $p^2$, and since 
\[
z^{p^{n-1}} U_{n+1} \in \langle (y^{p^{n-1}}U_{n+1})^{x^j} \mid j=0,\ldots,p^a-1 \rangle
\]
is non-trivial and fixed under the action of $x$, this intersection coincides with $V_n/U_{n+1}$, as desired.
\end{proof}

The factor groups $F/P_2(F)$ and $V_n/U_{n+1}$ are both elementary abelian of order $p^2$, generated by $xP_2(F)$ and $yP_2(F)$, and by $x^{p^{a+n-1}}U_{n+1}$ and $z^{p^{n-1}}U_{n+1}$, respectively.
In our next proposition we see that taking $p^{a+n-1}$st powers gives a bijection between $p$ of the $p+1$  subgroups of $F/P_2(F)$ and of $V_n/U_{n+1}$ of order $p$.
We need a couple of lemmas.

\begin{lem}
\label{conjugates of elements in F'}
If $w \in F'$ then 
\[
w^{x^{p^a-1}}\dots w^{x}w \in P_2(H).
\]
\end{lem}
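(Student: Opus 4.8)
The plan is to pass to the abelian quotient $H/P_2(H)$ and to recognise the product in the statement as a ``norm'' under the cyclic action of $x$. First I would record that, since $F/H$ is cyclic and hence abelian, we have $F'\le H$; consequently $w$ and all its conjugates $w^{x^i}$ lie in the normal subgroup $H$, and so does the product $w^{x^{p^a-1}}\cdots w^x w$. It therefore suffices to show that this product is trivial modulo $P_2(H)=H^p\gamma_2(H)$. Since $H$ is free, $H/P_2(H)$ is an $\F_p$-vector space with basis the images of the free generators $x^{p^a},y,y^x,\dots,y^{x^{p^a-1}}$ (this is \cref{huppert bijection} with $n=1$), and as the quotient is abelian the image of the product is simply the sum $\sum_{i=0}^{p^a-1}\overline{w^{x^i}}$.

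Next I would analyse conjugation by $x$. As $x$ normalises $H$ and its characteristic subgroup $P_2(H)$, it induces an automorphism $\sigma$ of $H/P_2(H)$ with $\overline{w^{x^i}}=\sigma^i(\overline w)$. On the chosen basis, $\sigma$ fixes $\overline{x^{p^a}}$, sends $\overline{y^{x^i}}$ to $\overline{y^{x^{i+1}}}$ for $i<p^a-1$, and sends $\overline{y^{x^{p^a-1}}}$ to $\overline{y^{x^{p^a}}}=\overline y$; the last equality holds because $y^{x^{p^a}}$ is a conjugate of $y$ by $x^{p^a}\in H$, and $H$-conjugation is trivial modulo $\gamma_2(H)\le P_2(H)$. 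Thus $\sigma$ cyclically permutes the images of $y,y^x,\dots,y^{x^{p^a-1}}$, and the image of the product equals the norm $N(\overline w)=\sum_{i=0}^{p^a-1}\sigma^i(\overline w)$.

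It remains to locate the image of $F'$. Since $F/F'$ is free abelian of rank $2$, the subgroup $F'$ is the normal closure of $[x,y]$ in $F$. A direct computation gives $[x,y]=(y^x)^{-1}y$, so modulo $P_2(H)$ the class of $[x,y]$ is $\overline y-\overline{y^x}$: it has zero $\overline{x^{p^a}}$-component, and the sum of its coefficients on the vectors $\overline{y^{x^i}}$ is zero. Using that $\sigma$ permutes these vectors (preserving this coefficient-sum) and that $H$-conjugation is trivial on $H/P_2(H)$, I would conclude that the entire image of $F'$ lies in the subspace $W_0$ consisting of the combinations of the $\overline{y^{x^i}}$ whose coefficient-sum vanishes.

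Finally, for any $\overline w\in W_0$ one computes $N(\overline w)=\big(\textstyle\sum_i c_i\big)\sum_{j}\overline{y^{x^j}}=0$, where the $c_i$ are the coefficients of $\overline w$; hence the image of the product vanishes and the product lies in $P_2(H)$, as required. The degenerate case $a=0$ is immediate, since then $H=F$ and the product is just $w\in\gamma_2(F)\le P_2(F)$. I expect the main obstacle to be the middle steps, namely getting the cyclic $\sigma$-action exactly right (in particular the wrap-around $\overline{y^{x^{p^a-1}}}\mapsto\overline y$) and identifying the image of $F'$ as the augmentation-type subspace $W_0$; once these are settled, the vanishing of the norm on $W_0$ is a one-line telescoping computation.
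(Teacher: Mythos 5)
Your proof is correct, but it takes a genuinely different route from the paper's. The paper first reduces to the single case $w=[x,y]$ (using that $H/P_2(H)$ is abelian and that, modulo $P_2(H)$, $F'$ is generated by the conjugates $[x,y]^{x^j}$), and then disposes of that case with the one-line identity $w^{x^{p^a-1}}\cdots w^x w = x^{-p^a}(xw)^{p^a} = x^{-p^a}(x^y)^{p^a} = [x^{p^a},y]\in\gamma_2(H)\le P_2(H)$. You instead work systematically with the $\F_p[C_{p^a}]$-module structure of $H/P_2(H)$: you locate the image of $F'$ inside the augmentation-zero subspace of the free summand spanned by the $\overline{y^{x^i}}$, and recognise the product in the statement as the norm map, which annihilates that subspace. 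Both arguments are sound; yours is longer but makes explicit the module-theoretic mechanism that the paper exploits later anyway (for instance in the analysis of $H^{p^{n-1}}U_{n+1}/U_{n+1}$ as an $\F_p[C_{p^a}]$-module), whereas the paper's is a shorter ad hoc computation. The only points in your sketch that need to be said carefully are (a) the wrap-around $\sigma(\overline{y^{x^{p^a-1}}})=\overline{y}$, which holds because $x^{p^a}\in H$ and conjugation by elements of $H$ is trivial modulo $\gamma_2(H)$, and (b) the claim that the image of $F'$ is spanned by the $\sigma$-translates of $\overline{[x,y]}$, which uses the decomposition $F=\langle x\rangle H$ together with the same triviality of $H$-conjugation; you address both, so I see no gap.
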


\begin{proof}
Since $F' \leq H$ and $H$ is abelian modulo $P_2(H)$, we have
\[
F' = \langle [x, y]^{x^j} \mid 0\leq j\leq p^a-1 \rangle P_2(H).
\]
Thus, in order to prove the lemma, it is enough to consider the case $w=[x, y]$.
Then
\[
w^{x^{p^a-1}}\dots w^{x}w
=
x^{-p^a} (xw)^{p^a}
=
x^{-p^a} (x^y)^{p^a}
=
[x^{p^a}, y] \in P_2(H).
\]
\end{proof}

\begin{lem}
\label{pa-th powers}
If $g\equiv x^iy^j \pmod{P_2(F)}$ and $p$ does not divide $i$, then
\begin{equation}
\label{congruence pa-th powers}
g^{p^a}\equiv (x^{p^a})^i z^j \pmod{P_2(H)}.
\end{equation}
\end{lem}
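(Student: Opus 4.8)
The plan is to compute $g^{p^a}$ by means of the elementary identity $(ab)^{m}=a^{m}\,b^{a^{m-1}}b^{a^{m-2}}\cdots b^{a}b$, valid in any group, and then to read off the answer in the elementary abelian group $H/P_2(H)$. Write $\sigma_x,\sigma_y\colon F\to\Z$ for the exponent-sum homomorphisms and set $s=\sigma_x(g)$. Since $F/H\cong C_{p^a}$ is generated by $xH$ and $y\in H$, we have $gH=x^{s}H$, so that $h:=x^{-s}g$ lies in $H$ and $g^{p^a}\in H$. Applying the identity with $a=x^{s}$, $b=h$ and $m=p^a$ gives
\[
g^{p^a}=(x^{p^a})^{s}\,\prod_{l=p^a-1}^{0}h^{x^{sl}},
\]
so everything reduces to understanding the product of the conjugates $h^{x^{sl}}$ modulo $P_2(H)$.

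First I would pass to $H/P_2(H)$, which is elementary abelian, so that the order of the factors above is irrelevant. Since $x^{p^a}\in H$ we have $[x^{p^a},h]\in[H,H]\le P_2(H)$, whence conjugation by $x^{p^a}$ is trivial modulo $P_2(H)$; in particular $h^{x^{k}}$ depends only on $k\bmod p^a$. As $\sigma_x,\sigma_y$ reduce modulo $p$ to homomorphisms on $F/P_2(F)$, the hypothesis $g\equiv x^{i}y^{j}\pmod{P_2(F)}$ gives $s\equiv i$ and $\sigma_y(h)=\sigma_y(g)\equiv j\pmod p$. In particular $p\nmid s$, so $l\mapsto sl$ permutes $\Z/p^a$, and therefore
\[
\prod_{l=p^a-1}^{0}h^{x^{sl}}\equiv\prod_{k=0}^{p^a-1}h^{x^{k}}=:\nu(h)\pmod{P_2(H)}.
\]
Moreover $(x^{p^a})^{p}=x^{p^{a+1}}\in H^{p}\le P_2(H)$, so $x^{p^a}$ has order dividing $p$ modulo $P_2(H)$ and $(x^{p^a})^{s}\equiv(x^{p^a})^{i}$.

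It remains to evaluate the norm $\nu(h)$ modulo $P_2(H)$. Because $H/P_2(H)$ is abelian, $\nu$ is multiplicative modulo $P_2(H)$; it kills $\gamma_2(H)\le F'$ by \cref{conjugates of elements in F'}, and it kills $H^{p}$ since $\nu(w^{p})\equiv\nu(w)^{p}=1$, so $\nu$ induces a homomorphism $H/P_2(H)\to H/P_2(H)$. I would then compare it with the homomorphism $h\mapsto z^{\sigma_y(h)}$ (recall $z=y^{x^{p^a-1}}\cdots y^{x}y$) on the free generators $x^{p^a},y,y^{x},\dots,y^{x^{p^a-1}}$ of $H$: indeed $\nu(x^{p^a})=x^{p^{2a}}\in H^{p}$ gives $\nu(x^{p^a})\equiv 1=z^{\sigma_y(x^{p^a})}$, while for each $k$ the periodicity above yields $\nu(y^{x^{k}})=\prod_{m=0}^{p^a-1}y^{x^{k+m}}\equiv\prod_{r=0}^{p^a-1}y^{x^{r}}\equiv z=z^{\sigma_y(y^{x^k})}$. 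Since two homomorphisms into an abelian group that agree on a generating set coincide, $\nu(h)\equiv z^{\sigma_y(h)}\equiv z^{j}$, the last step using $\sigma_y(h)\equiv j\pmod p$ and that $z$ has order $p$ modulo $P_2(H)$. Combining the three displays gives $g^{p^a}\equiv(x^{p^a})^{i}z^{j}\pmod{P_2(H)}$.

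The main obstacle, and the point requiring care, is the bookkeeping forced by the fact that the congruence $g\equiv x^{i}y^{j}\pmod{P_2(F)}$ only pins down $i$ and $j$ modulo $p$, whereas the power identity naturally produces the true exponent sum $s$ and the full conjugacy orbit of $h$. Reconciling these is exactly what the triviality of conjugation by $x^{p^a}$, the order-$p$ collapse of $x^{p^a}$ and of $z$, and the evaluation of the norm via \cref{conjugates of elements in F'} are designed to do.
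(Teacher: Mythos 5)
Your proof is correct and follows essentially the same route as the paper's: both expand $(x^{s}h)^{p^a}$ via the telescoping identity, reindex the conjugates of $h$ using that $p\nmid s$ and that conjugation by $x^{p^a}$ is trivial modulo $P_2(H)$, and invoke \cref{conjugates of elements in F'} to dispose of the commutator part, the only cosmetic difference being that the paper substitutes $h=y^{\ell}w$ directly rather than comparing the norm map with $h\mapsto z^{\sigma_y(h)}$ on the free generators of $H$. (A trivial quibble: in the degenerate case $a=0$ your step $\nu(x^{p^a})=x^{p^{2a}}\in H^{p}$ fails, but there $H=F$, $z=y$, and the lemma is just the hypothesis, so that case can be dismissed outright.)
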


\begin{proof}
Since $P_2(F)=\langle x^p,y^p \rangle F'$, we can write $g=x^k y^{\ell} w$, with $k\equiv i\pmod p$,
$\ell\equiv j\pmod p$ and $w\in F'$.
Set $h=y^{\ell} w\in H$.
Then
\[
g^{p^a}
=
(x^{p^a})^k h^{x^{k(p^a-1)}} \ldots h^{x^k} h.
\]
Since $p$ does not divide $i$, the subgroups $\langle x^k \rangle$ and $\langle x \rangle$ coincide modulo $H$, and since $H$ is abelian modulo $P_2(H)$, it follows that
\begin{align*}
g^{p^a}
&\equiv
(x^{p^a})^k \, h^{x^{p^a-1}} \ldots h^x h
\\
&\equiv
(x^{p^a})^k \, z^{\ell} \, (w^{x^{p^a-1}} \ldots w^x w)
\\
&\equiv
(x^{p^a})^k \, z^{\ell}
\pmod{P_2(H)},
\end{align*}
by using \cref{conjugates of elements in F'}.
Now, since $H/P_2(H)$ is of exponent $p$, we obtain the congruence in
\eqref{congruence pa-th powers}.
\end{proof}

\begin{pro}
\label{bijection}
Let $n\ge 2$ and let $\mathcal M$ and $\mathcal N$ be the sets of all subgroups of $F/P_2(F)$ and $V_n/U_{n+1}$ of order $p$ other than $\langle yP_2(F) \rangle$ and
$\langle z^{p^{n-1}} U_{n+1} \rangle$, respectively.
Then the map
\[
\begin{matrix}
\Phi & \colon & \mathcal{M} & \longrightarrow & \mathcal{N}
\\
& & \langle gP_2(F) \rangle & \longmapsto & \langle g^{p^{a+n-1}} U_{n+1} \rangle
\end{matrix}
\]
is a bijection.
\end{pro}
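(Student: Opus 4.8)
The plan is to compute the image of $\Phi$ by a single explicit formula and then read off the bijection from the $\F_p$-module structure of $V_n/U_{n+1}$ obtained in \cref{Vn}. Throughout I fix $g$ with $g\equiv x^iy^j\pmod{P_2(F)}$ and $p\nmid i$, so that $\langle gP_2(F)\rangle$ is a typical member of $\mathcal M$.

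The computational heart of the argument is the congruence
\[
g^{p^{a+n-1}} \equiv x^{ip^{a+n-1}} z^{jp^{n-1}} \pmod{U_{n+1}}.
\]
To prove it I would first invoke \cref{pa-th powers}, which gives $g^{p^a}\equiv (x^{p^a})^i z^j\pmod{P_2(H)}$, and then raise this to the $p^{n-1}$st power inside the free group $H$. Since $n+1\ge 2$, I may apply \cref{pn-2 powers} with $n$ replaced by $n+1$: equation \eqref{pn-2 powers-2} upgrades the congruence modulo $P_2(H)$ to a congruence of $p^{n-1}$st powers modulo $Q_{n+1,2}(H)=U_{n+1}$, and the additivity in \eqref{pn-2 powers-1} splits $\big((x^{p^a})^i z^j\big)^{p^{n-1}}$ into $x^{ip^{a+n-1}} z^{jp^{n-1}}$ modulo $U_{n+1}$. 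The main obstacle is nothing more than this bookkeeping: keeping track of which index to feed into \cref{pn-2 powers}, and checking at the outset that $x^{p^a}$ and $z$ really lie in $H$ so that the Section~2 machinery applies.

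Next I would translate the formula into module language. By \cref{Vn}, $V_n/U_{n+1}$ is elementary abelian of order $p^2$ with basis $\overline\alpha=x^{p^{a+n-1}}U_{n+1}$ and $\overline\beta=z^{p^{n-1}}U_{n+1}$, so the key congruence reads $g^{p^{a+n-1}}U_{n+1}=\overline\alpha^{\,i}\overline\beta^{\,j}$. Hence $\Phi$ sends $\langle x^iy^jP_2(F)\rangle$ to $\langle\overline\alpha^{\,i}\overline\beta^{\,j}\rangle$. Two consequences are immediate: the image depends only on the $\F_p$-ratio of $(i,j)$, that is, only on the subgroup $\langle x^iy^jP_2(F)\rangle$, so $\Phi$ is well defined; and since $p\nmid i$ the element $\overline\alpha^{\,i}\overline\beta^{\,j}$ is nontrivial and lies outside $\langle\overline\beta\rangle=\langle z^{p^{n-1}}U_{n+1}\rangle$, so the image genuinely lands in $\mathcal N$.

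Finally, bijectivity falls out of the normalisation $i=1$. Every subgroup in $\mathcal M$ is $\langle xy^jP_2(F)\rangle$ for a unique $j\in\{0,\dots,p-1\}$, every subgroup in $\mathcal N$ is $\langle\overline\alpha\,\overline\beta^{\,j}\rangle$ for a unique such $j$, and the formula shows $\Phi$ matches these parameters identically. Equivalently, since $F/P_2(F)$ and $V_n/U_{n+1}$ are both elementary abelian of order $p^2$, the sets $\mathcal M$ and $\mathcal N$ each have exactly $p$ elements, so the injectivity visible in the formula already forces $\Phi$ to be a bijection.
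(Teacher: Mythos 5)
Your proposal is correct and follows essentially the same route as the paper: the key congruence $g^{p^{a+n-1}}\equiv x^{ip^{a+n-1}}z^{jp^{n-1}}\pmod{U_{n+1}}$ is obtained from \cref{pa-th powers} followed by \cref{pn-2 powers}, and bijectivity is then read off from $|\mathcal M|=|\mathcal N|=p$ exactly as in the paper (the paper verifies surjectivity where you verify injectivity, which is equivalent given the count).
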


\begin{proof}
First of all, let us see that $\Phi$ is well defined, i.e.\ that it does not depend on the choice of the generator $gP_2(F)$.
Since $\langle gP_2(F) \rangle$ belongs to $\mathcal M$, it is equal to $\langle xy^kP_2(F) \rangle$ for some $k\in\{0,\ldots,p-1\}$.
Then $g\equiv x^iy^{ki}\pmod{P_2(F)}$ for some $i$ coprime to $p$.
By combining \cref{pa-th powers} and \cref{pn-2 powers}, we get
\[
g^{p^{a+n-1}}
\equiv
(x^{p^{a+n-1}})^i (z^{p^{n-1}})^{ki}
\equiv
(x^{p^{a+n-1}} \, z^{kp^{n-1}})^i \pmod{U_{n+1}}.
\]
Similarly,
\begin{equation}
\label{Phi surjective}
(xy^k)^{p^{a+n-1}} \equiv x^{p^{a+n-1}} \, z^{kp^{n-1}} \pmod{U_{n+1}},
\end{equation}
and consequently $\langle g^{p^{a+n-1}} U_{n+1} \rangle = \langle (xy^k)^{p^{a+n-1}} U_{n+1} \rangle$.
This proves that $\Phi$ is well defined.

Finally, \eqref{Phi surjective} shows that $\Phi$ is surjective and, since $|\mathcal{M}|=|\mathcal{N}|=p$,
we conclude that it is bijective.
\end{proof}

We need one more ingredient before proceeding to the proof of the main result of this section.
Beauville structures are not necessarily inherited when passing to a quotient and cannot be in general lifted from a quotient.
However, as we see in the next two lemmas, we can preserve Beauville structures in some special cases.

\begin{lem}{\cite[Lemma 4.2]{FJ}}
\label{lifting structure}
Let $G$ be a finite $2$-generator group, and let $T_1$ and $T_2$ be two spherical triples of generators of $G$.
Assume that $N\trianglelefteq G$ is such that:
\begin{enumerate}
\item
$(\overline T_1,\overline T_2)$ is a Beauville structure for $\overline G=G/N$.
\item
$o(t_1)=o(\overline t_1)$ for every $t_1\in T_1$.
\end{enumerate}
Then $(T_1,T_2)$ is a Beauville structure for $G$.
\end{lem}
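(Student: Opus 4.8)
The plan is to verify the Beauville condition \eqref{beauville condition} for the pair $(T_1,T_2)$ directly, by contradiction. Since $T_1$ and $T_2$ are already given as spherical triples of generators of $G$, the only thing left to check is that $\langle t_1 \rangle^g \cap \langle t_2 \rangle = 1$ for all $t_1\in T_1$, $t_2\in T_2$ and $g\in G$. I would therefore suppose this fails for some such $t_1, t_2, g$, and pick a non-trivial element $x$ in the intersection $\langle t_1 \rangle^g \cap \langle t_2 \rangle$.

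The argument then turns on a dichotomy for the image $\overline x$ of $x$ in $\overline G$. If $\overline x \ne \overline 1$, then $\overline x$ is a non-trivial element of $\langle \overline t_1 \rangle^{\overline g} \cap \langle \overline t_2 \rangle$, which contradicts hypothesis (i), that $(\overline T_1,\overline T_2)$ is a Beauville structure for $\overline G$. So I may assume $\overline x = \overline 1$, i.e.\ $x\in N$, and it is here that hypothesis (ii) must enter.

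The one genuinely delicate point is the asymmetry of the hypotheses: order-preservation is available for $T_1$ but not for $T_2$, so the reasoning has to be arranged to use only the former. Accordingly, since $x\in\langle t_1\rangle^g$, I would write $x = (t_1^g)^a$ for some integer $a$. As conjugation is an automorphism, $o(t_1^g)=o(t_1)$, and likewise $o(\overline{t_1^g})=o(\overline t_1)$; together with (ii) this gives the chain $o(\overline{t_1^g})=o(\overline t_1)=o(t_1)=o(t_1^g)$. Now $x\in N$ means $(\overline{t_1^g})^a=\overline 1$, so this common order divides $a$, whence $x=(t_1^g)^a=1$ already in $G$, contradicting the choice of $x$.

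Having excluded both alternatives, the intersection $\langle t_1 \rangle^g \cap \langle t_2 \rangle$ is trivial for every admissible $t_1, t_2, g$, so $(T_1,T_2)$ satisfies \eqref{beauville condition} and is a Beauville structure for $G$. The main obstacle, as indicated, is precisely the asymmetry between the assumptions on $T_1$ and $T_2$: one must expose the putative element of the intersection as a power of a conjugate of an element of $T_1$, rather than of $T_2$, so that the order-preservation hypothesis can be applied.
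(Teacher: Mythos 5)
Your proof is correct. Note that the paper itself gives no proof of this lemma, quoting it directly from \cite[Lemma 4.2]{FJ}; your argument is the standard one for that result --- hypothesis (ii) forces $\langle t_1\rangle^g\cap N=1$, so any element of $\langle t_1\rangle^g\cap\langle t_2\rangle$ either survives in $\overline G$ (contradicting (i)) or is trivial --- and you correctly identify and handle the asymmetry between the hypotheses on $T_1$ and $T_2$.
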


\begin{lem}
\label{reducing BS}
Let $G$ be a finite $p$-group with a Beauville structure $(T_1,T_2)$, and let $N\trianglelefteq G$.
Suppose that there exists $K\trianglelefteq G$ such that:
\begin{enumerate}
\item
For every $t\in T_1\cup T_2$, the subgroup of order $p$ of $\langle t \rangle$ lies in $K$.
\item
$K\cap N=1$.
\end{enumerate}
Then $(\overline T_1,\overline T_2)$ is a Beauville structure for $\overline G=G/N$.
\end{lem}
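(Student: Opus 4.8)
The plan is to verify the two defining properties of a Beauville structure for $(\overline T_1,\overline T_2)$ directly. That $\overline T_1$ and $\overline T_2$ are again spherical triples of generators of $\overline G$ is immediate, since the canonical projection $\phi\colon G\to\overline G$ carries a generating set to a generating set and preserves the relation $t_1t_2t_3=1$. Hence the whole content of the lemma is the verification of condition \eqref{beauville condition} for $\overline G$, namely that $\langle\overline t_1^{\,\overline g}\rangle\cap\langle\overline t_2\rangle=\overline 1$ for all $t_1\in T_1$, $t_2\in T_2$ and $g\in G$. Before arguing I would record two simple remarks. First, the restriction $\phi|_K$ is injective, since its kernel is $K\cap N=1$ by hypothesis (ii). Second, I would use the elementary fact that if $A$ and $B$ are cyclic $p$-subgroups of a finite $p$-group and $A\cap B\ne 1$, then their unique subgroups of order $p$ coincide: any minimal subgroup of the nontrivial intersection $A\cap B$ must be the unique minimal subgroup of both $A$ and $B$. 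Writing $\Omega(C)$ for the subgroup of order $p$ of a nontrivial cyclic $p$-group $C$, this says that $A\cap B\ne 1$ forces $\Omega(A)=\Omega(B)$.

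I would then argue by contradiction. Assume condition \eqref{beauville condition} fails in $\overline G$, so that $\langle\overline t_1^{\,\overline g}\rangle\cap\langle\overline t_2\rangle\ne\overline 1$ for some $t_1\in T_1$, $t_2\in T_2$ and $g\in G$; by the second remark this means $\Omega(\langle\overline t_1^{\,\overline g}\rangle)=\Omega(\langle\overline t_2\rangle)$. Set $C_1=\Omega(\langle t_1\rangle)$ and $C_2=\Omega(\langle t_2\rangle)$, which lie in $K$ by hypothesis (i). Since $K\trianglelefteq G$ we have $C_1^{\,g}\le K$, and $C_1^{\,g}=\Omega(\langle t_1\rangle^g)$ because conjugation is an order-preserving automorphism. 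Using the injectivity of $\phi|_K$, both $\phi(C_1^{\,g})$ and $\phi(C_2)$ have order $p$; being subgroups of the cyclic groups $\langle\overline t_1^{\,\overline g}\rangle=\phi(\langle t_1\rangle^g)$ and $\langle\overline t_2\rangle$, they are therefore exactly $\Omega(\langle\overline t_1^{\,\overline g}\rangle)$ and $\Omega(\langle\overline t_2\rangle)$. Combining this with the coincidence above gives $\phi(C_1^{\,g})=\phi(C_2)$, and since $C_1^{\,g},C_2\le K$, the injectivity of $\phi|_K$ yields $C_1^{\,g}=C_2$. But then $\langle t_1\rangle^g\cap\langle t_2\rangle\ge C_1^{\,g}=C_2\ne 1$, contradicting the assumption that $(T_1,T_2)$ is a Beauville structure for $G$. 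This contradiction establishes \eqref{beauville condition} for $\overline G$, and hence the lemma.

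The step needing the most care, and where both hypotheses are genuinely used in tandem, is the transfer of minimal subgroups between $G$ and $\overline G$: hypothesis (i) is what places $C_1$ and $C_2$ inside $K$, the normality of $K$ is what keeps the conjugate $C_1^{\,g}$ inside $K$, and hypothesis (ii), through the injectivity of $\phi|_K$, is used both to guarantee that these subgroups of order $p$ survive in the quotient as the minimal subgroups of the relevant cyclic images and to pull the coincidence of minimal subgroups in $\overline G$ back to a coincidence in $G$. Everything else is formal.
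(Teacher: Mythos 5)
Your proof is correct and follows essentially the same route as the paper: both arguments reduce a hypothetical nontrivial intersection in $\overline G$ to a coincidence of minimal subgroups, pull that coincidence back into $K$ using hypothesis (i) and the normality of $K$, and then use $K\cap N=1$ to obtain a coincidence of minimal subgroups in $G$, contradicting the Beauville structure there. The only cosmetic difference is that you phrase the argument in terms of the subgroups $\Omega(\langle t\rangle)$ and the injectivity of the projection on $K$, whereas the paper writes the same computation with explicit elements $t_1^{n_1}$ and $t_2^{in_2}$.
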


\begin{proof}
First of all, (ii) implies that $o(\overline t)=o(t)$ for all $t\in T_1\cup T_2$.
By way of contradiction, assume that there exist $t_1\in T_1$, $t_2\in T_2$ and $g\in G$ such that
$\langle \overline t_1^{\,\overline g} \rangle \cap \langle \overline t_2 \rangle \ne \overline 1$.
Then the subgroups of order $p$ of these two cyclic subgroups coincide.
Thus if we set $n_1=o(t_1)/p$ and $n_2=o(t_2)/p$, we have
$\big(\overline t_1^{\,n_1} \big)^{\overline g}=\overline t_2^{\,in_2}$ for some $i$ coprime to $p$.
By (i), and since $K\trianglelefteq G$, we get
\[
\big( t_1^{n_1} \big)^g \, t_2^{-in_2} \in K\cap N=1.
\]
Hence $\big( t_1^{n_1} \big)^g=t_2^{in_2}$.
This is a contradiction, since $(T_1,T_2)$ is a Beauville structure for $G$.
\end{proof}

We can finally present the main result of this section.

\begin{thm}
\label{thm:beauville iff}
Let $W$ be a normal subgroup of $F$ such that $U_{n+1}\le W\le P_n(H)$, with $n\ge 2$.
Then $F/W$ is a Beauville group if and only if $W\cap V_n$ is either equal to $V_n$ or to $U_{n+1}$.
\end{thm}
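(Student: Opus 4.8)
The plan is to prove the two implications separately, writing $T_1,T_2$ for the spherical triples of generators used in \cref{F/N Beauville}; these generate every $p$-group quotient of $F$, and by \cref{F/N Beauville} they form a Beauville structure of $F/M$ whenever $U_m\le M\le P_m(H)$. The whole argument rests on the fact that $V_n/U_{n+1}$ is central in $F/U_{n+1}$ and records the bottom layer of the cyclic subgroups $\langle (xy^j)^{p^{a+n-1}}\rangle$. For the forward direction, suppose first that $W\cap V_n=V_n$, i.e.\ $V_n\le W$. Here I would apply \cref{lifting structure} to the surjection $F/W\to F/N$ with $N=U_nW$. Since $U_n\le N\le P_n(H)$, \cref{F/N Beauville} makes $F/N$ a Beauville group via $(\overline T_1,\overline T_2)$, giving hypothesis (i); and each $t_1\in T_1$ has order $p^{a+n-1}$ modulo $N$ by \cref{orders of generators}. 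To obtain hypothesis (ii) I only need the same order modulo $W$. Now $t_1^{p^{a+n-1}}\in V_n$ for every $t_1\in T_1$ (immediate for $x^{-2}$, and for the remaining generators from the power formula of \cref{bijection}), so $V_n\le W$ forces $t_1^{p^{a+n-1}}\in W$; combined with the lower bound $o(t_1)\ge p^{a+n-1}$ coming from $W\le P_n(H)$, this gives equality, and \cref{lifting structure} yields a Beauville structure on $F/W$.

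Suppose next that $W\cap V_n=U_{n+1}$. Now I would use \cref{reducing BS} with $G=F/U_{n+1}$ (a Beauville group via $(\overline T_1,\overline T_2)$ by \cref{F/N Beauville}), with $N=W/U_{n+1}$, and with $K=H^{p^{n-1}}U_{n+1}/U_{n+1}\trianglelefteq G$. Hypothesis (i) is clear: by \cref{orders of generators} the order-$p$ subgroup of each $x$-type generator in $T_1\cup T_2$ is generated by its $p^{a+n-1}$st power, hence lies in $V_n\le K$, while that of $y$ is $\langle y^{p^{n-1}}\rangle\le K$. The decisive point is hypothesis (ii), namely $K\cap W=U_{n+1}$. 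For this I would invoke \cref{Vn}: writing $C=F/H$, the quotient $K/U_{n+1}$ is an $\F_pC$-module whose socle is exactly the maximal trivial submodule $V_n/U_{n+1}$. Since every nonzero submodule of a finite module meets the socle, the submodule $(W\cap K)/U_{n+1}$, whose intersection with the socle is $(W\cap V_n)/U_{n+1}=1$, must itself be trivial; thus $K\cap W=U_{n+1}$ and \cref{reducing BS} produces a Beauville structure on $F/W$.

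The converse is the main obstacle, because we must exclude \emph{every} Beauville structure of $F/W$, not merely the standard one. Assume $(W\cap V_n)/U_{n+1}=\langle v\rangle$ has order $p$, and set $\overline V=V_nW/W$, a central subgroup of $F/W$ of order $p$. The key mechanism is that collapsing $V_n$ onto the single line $\overline V$ forces many generators to contain it: for any $g\equiv x^iy^j\pmod{P_2(F)}$ with $p\nmid i$ and $(i,j)\notin\langle v\rangle$, the congruence $g^{p^{a+n-1}}\equiv (x^{p^{a+n-1}})^i(z^{p^{n-1}})^j\pmod{U_{n+1}}$, obtained as in \cref{bijection} from \cref{pa-th powers} and \cref{pn-2 powers}, shows that $g^{p^{a+n-1}}W$ is a nontrivial element of $\overline V$, whence $\overline V\le\langle gW\rangle$.

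To finish I would show that every spherical triple of generators of $F/W$ contains such a $g$. In the Frattini quotient $F/P_2(F)\cong\F_p^2$ the forbidden directions are $\langle\overline y\rangle$ and the line corresponding to $\langle v\rangle$ via the bijection of \cref{bijection} (which may coincide with $\langle\overline y\rangle$), so at most two lines. The three images of a spherical generating triple are nonzero, span $\F_p^2$, and sum to zero; if all three lay in the union of two lines, then two would share a line $L$, forcing the third (their negative sum) into $L$ as well and contradicting that they span. Hence some component $t$ of the triple lies outside both forbidden directions, i.e.\ satisfies $p\nmid i$ and $(i,j)\notin\langle v\rangle$, so that $\overline V\le\langle tW\rangle$. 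Applying this to the two triples of any putative Beauville structure produces elements $t_1,t_2$, one in each triple, with $\overline V\le\langle t_1\rangle\cap\langle t_2\rangle$; this already violates \eqref{beauville condition} for $g=1$, so $F/W$ is not a Beauville group.
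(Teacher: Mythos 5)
Your proof is correct and follows essentially the same route as the paper's: lifting via \cref{lifting structure} when $V_n\le W$, descending via \cref{reducing BS} with $K=H^{p^{n-1}}U_{n+1}/U_{n+1}$ when $W\cap V_n=U_{n+1}$, and in the intermediate case using the bijection of \cref{bijection} to force two generators from different triples to share the central subgroup $V_nW/W$. The only (cosmetic) differences are that you lift from $F/U_nW$ rather than $F/P_n(H)$, phrase the triviality of $K\cap N$ in socle language instead of the ``nontrivial normal subgroup of a $p$-group meets the centre'' argument, and count forbidden lines in $F/P_2(F)$ rather than maximal subgroups.
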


\begin{proof}
We first assume that $W\cap V_n=V_n$, i.e.\ that $V_n\le W$.
By the proof of \cref{F/N Beauville}, $F/P_n(H)$ has a Beauville structure whose first triple is given by the images of $T_1=(x^{-2}, xy, x^{-1}y)$.
Let $t_1\in T_1$.
By \cref{orders of generators}, the order of $t_1$ modulo $P_n(H)$ is $p^{a+n-1}$.
On the other hand, by \eqref{key congruence}, $t_1^{p^{a+n-1}}$ lies in $V_n$, and so also in $W$.
Thus $t_1$ has the same order modulo $P_n(H)$ and modulo $W$ and, by \cref{lifting structure}, the Beauville structure for $F/P_n(H)$ can be lifted to a Beauville structure for $F/W$.

Suppose now that $W\cap V_n=U_{n+1}$.
We already know a Beauville structure $(\overline T_1,\overline T_2)$ for $\overline F=F/U_{n+1}$,  by \cref{F/N Beauville}.
We are going to prove that $F/W$ is a Beauville group by applying \cref{reducing BS} to $G=F/U_{n+1}$, with $N=W/U_{n+1}$ and $K=H^{p^{n-1}}U_{n+1}/U_{n+1}$.
If we consider the elements of $T_1$ and $T_2$ in the proof of \cref{F/N Beauville} and we use
\cref{orders of generators}, it is clear that condition (i) of \cref{reducing BS} holds.
Thus we only need to see that $K\cap N$ is trivial.
If not, then this non-trivial normal subgroup of the finite $p$-group $F/U_{n+1}$ must intersect non-trivially the centre, that is, we have
\[
\frac{W}{U_{n+1}} \cap \frac{H^{p^{n-1}}U_{n+1}}{U_{n+1}} \cap Z\Big( \frac{F}{U_{n+1}} \Big)
\ne \overline 1.
\]
By (i) of \cref{Vn}, it follows that $W/U_{n+1}\cap V_n/U_{n+1} \ne \overline 1$, which is impossible since
$W\cap V_n=U_{n+1}$.

Finally, we assume that $W\cap V_n$ is different from $V_n$ and from $U_{n+1}$, and we prove that
$\overline F=F/W$ is not a Beauville group.
By \cref{Vn}, we have $|V_n:U_{n+1}|=p^2$, and consequently
$|V_n:W\cap V_n|=|W\cap V_n:U_{n+1}|=p$.
Let us write $\mathcal{M}^*$ for the set of subgroups of $\mathcal{M}$ whose image under the map $\Phi$ of \cref{bijection} is different from $(W\cap V_n)/U_{n+1}$.
If $\mathfrak{M}^*$ is the set of maximal subgroups of $F$ containing $P_2(F)$ that correspond to the subgroups in $\mathcal{M}^*$, then $|\mathfrak{M}^*|\ge p-1$ by \cref{bijection}.
From the definition of $\Phi$, for all $M\in\mathfrak{M}^*$ and all $g\in M\smallsetminus P_2(F)$
we have $\langle g^{p^{a+n-1}} \rangle U_{n+1} \ne W\cap V_n$, and consequently
$\langle g^{p^{a+n-1}} \rangle (W\cap V_n)=V_n$.
Hence for all such elements $g$, we have
\begin{equation}
\label{collision}
\langle \overline g^{\,p^{a+n-1}} \rangle=V_nW/W
\ne \overline 1.
\end{equation}

Now assume, by way of contradiction, that $\overline F$ has a Beauville structure
$(\overline T_1,\overline T_2)$.
Then the elements of $\overline T_1$ lie in three different maximal subgroups of $\overline F$, and similarly for $\overline T_2$.
Since $\overline F$ has $p+1$ maximal subgroups and $|\mathfrak{M}^*|\ge p-1$, there exist
$t_1\in T_1$ and $t_2\in T_2$ with $t_1\in M_1\smallsetminus P_2(F)$ and
$t_2\in M_2\smallsetminus P_2(F)$ for some $M_1,M_2\in \mathfrak{M}^*$.
Then, by \eqref{collision},
\[
\langle \overline t_1^{\,p^{a+n-1}} \rangle=V_nW/W=\langle \overline t_2^{\,p^{a+n-1}} \rangle
\]
coincide, which is impossible if $(\overline T_1,\overline T_2)$ is a Beauville structure.
\end{proof}

\section{Asymptotics}

In this section we combine the results of \cref{sec:stdbeau} with the counting method of \cite{J} in order to obtain asymptotic lower bounds for both $\mathrm{b}(p^t)$ and $\mathrm{nb}(p^t)$.
We need to adapt the arguments of \cite{J} to our specific situation, which requires some preliminary work.
Also, even if most of the time we simply quote the results in \cite{J}, on occasions we have partially rewritten some of the proofs for the convenience of the reader.

In the remainder, $p$ denotes a fixed prime with $p\ge 5$.
Also, as in \cref{sec:stdbeau}, $F$ is the free group on two generators $x$ and $y$, and
$H=\langle x^{p^a},y \rangle^F$, for some non-negative integer $a$.
Then $H$ is of index $p^a$ in $F$, and is a free group in the $p^a+1$ generators
$\{x^{p^a},y^{x^i}\mid i=0,\ldots,p^a-1\}$.

We consider a standard series $\mathcal S=\{S_{n,k}\}$ of $H$, constructed in such a way that, every time we need to introduce subgroups which are not $p$th powers of previous ones, we choose them to be normal in $F$, not only in $H$.
Then $\mathcal S$ consists entirely of normal subgroups of $F$.
The Beauville and non-Beauville groups that we will be counting will all arise in the form
$F/W$ with $S_{n,k}^p[S_{n,k},H]\le W\le S_{n,k}$ for some $n$ and $k$.
We will call these factor groups \emph{standard $2$-generator $p$-groups\/}.
This makes it necessary to study the quotients $S_{n,k}/S_{n,k}^p[S_{n,k},H]$.

\begin{rmk}
For every subgroup $T$ of $H$ which is normal in $F$, the section $T/T^p[T,H]$ is an
$\F_p[C_{p^a}]$-module, since $H$ acts trivially on it.
Then we denote by $\phantom{i}^\ast : T\to T/T^p[T,H]$ the natural projection.
\end{rmk}

Our goal is to find a direct sum decomposition for the module $S_{n,k}^*$ that will allow us to count easily standard groups $F/W$ for which we know  \textit{a priori} the intersection $W\cap V_n$ and so, by  \cref{thm:beauville iff}, whether $F/W$ is a Beauville group or not.
We do this in the following lemmas.

\begin{lem}
\label{order of Qn1}
For every $n\in\N$, we have $|Q_{n,1}(H):P_{n+1}(H)|=p^{\,p^a+1}$.
As a consequence, $Q_{n,1}(H)=S_{n,p^a+1}$.
\end{lem}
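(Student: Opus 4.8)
We must show $|Q_{n,1}(H):P_{n+1}(H)| = p^{p^a+1}$ for the free group $H$ of rank $p^a+1$, and deduce $Q_{n,1}(H)=S_{n,p^a+1}$.

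The plan is to compute the index directly from the definition of $Q_{n,1}$ and the structure of powers in a free group. Recall that $Q_{n,1}(H)=H^{p^{n-1}}P_{n+1}(H)$, so we are counting
\[
|H^{p^{n-1}}P_{n+1}(H):P_{n+1}(H)|.
\]
By \cref{powers and comms mod Pn+2}(i), the $p^{n-1}$st power map induces the homomorphism $\pi_{n-1,\,?}$; more precisely, taking $p^{n-1}$st powers gives a homomorphism from $H/P_2(H)$ into $P_n(H)/P_{n+1}(H)$ whose image generates $H^{p^{n-1}}P_{n+1}(H)/P_{n+1}(H)$. First I would observe that $H/P_2(H)$ is elementary abelian of rank $\rank H = p^a+1$, generated by the images of the free generators $\{x^{p^a},\,y^{x^i}\mid i=0,\ldots,p^a-1\}$. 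Applying \cref{pn-2 powers-1} (or the homomorphism property of (i) in \cref{powers and comms mod Pn+2}) repeatedly, the image $H^{p^{n-1}}P_{n+1}(H)/P_{n+1}(H)$ is spanned over $\F_p$ by the $p^{n-1}$st powers of these $p^a+1$ free generators.

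The key step, which is where the freeness of $H$ is essential, is the linear independence of these powers. This is exactly \cref{linearly independence}: with $F$ there replaced by $H$ (free of rank $p^a+1$) and $N=P_{n+1}(H)$, which satisfies $Q_{n,2}(H)=U_n\le P_{n+1}(H)$? — here I must be careful and instead invoke the proposition with $N=Q_{n,2}(H)$ or directly with $N$ between $Q_{n,2}(H)$ and $P_n(H)$. The cleanest route is: since $P_{n+1}(H)\le Q_{n,2}(H)\le P_n(H)$, \cref{linearly independence} applied with $N=Q_{n,2}(H)$ shows the $p^{n-2}$nd powers of the free generators are independent modulo $Q_{n,2}(H)$; then one more application of $\pi$ (raising to the $p$th power, which by \cref{isomorphism free case} is injective on $P_{n-1}(H)/P_n(H)$) transports this to independence of the $p^{n-1}$st powers modulo $P_{n+1}(H)$. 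Thus the image has exactly $\F_p$-dimension $p^a+1$, giving $|Q_{n,1}(H):P_{n+1}(H)|=p^{\,p^a+1}$.

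The main obstacle I anticipate is the bookkeeping of which subgroup plays the role of $N$ in \cref{linearly independence} and matching the exponent $p^{n-2}$ there against the exponent $p^{n-1}$ needed here; one must insert the single extra $p$th-power step and justify its injectivity via \cref{isomorphism free case}, rather than directly quoting \cref{linearly independence} at level $p^{n-1}$. Once the index is established, the final claim $Q_{n,1}(H)=S_{n,p^a+1}$ is immediate: the standard series $\mathcal S(n)$ refines $P_{n+1}(H)\le P_n(H)$ with all consecutive indices equal to $p$, so the term $S_{n,k}$ is the unique subgroup of the series with $|S_{n,k}:P_{n+1}(H)|=p^k$; since $Q_{n,1}(H)$ is itself a term of this standard refinement (being one of the $Q_{n,k}(H)$, which the standard series passes through) of index $p^{p^a+1}$ over $P_{n+1}(H)$, it must equal $S_{n,p^a+1}$.
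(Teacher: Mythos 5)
Your overall strategy---identify $Q_{n,1}(H)/P_{n+1}(H)$ as the image of the $p^{n-1}$st power map on $H/P_2(H)$, and count by showing the $p^{n-1}$st powers of the $p^a+1$ free generators of $H$ are linearly independent modulo $P_{n+1}(H)$---is sound, and your deduction of $Q_{n,1}(H)=S_{n,p^a+1}$ at the end is exactly the paper's. However, the specific chain you propose for the independence step has a gap. Applying \cref{linearly independence} (to the free group $H$) with $N=Q_{n,2}(H)$ gives independence of the $p^{n-2}$nd powers modulo $Q_{n,2}(H)$; but to push this forward through the injective map $\pi_{n-1,1}\colon P_{n-1}(H)/P_n(H)\to P_n(H)/P_{n+1}(H)$ you need independence modulo $P_n(H)$, which is the \emph{stronger} statement: since $Q_{n,2}(H)\le P_n(H)$, a nontrivial combination could lie in $P_n(H)$ without lying in $Q_{n,2}(H)$, so independence modulo the smaller subgroup does not transport. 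The fix is immediate, though: \cref{linearly independence} allows any $N$ with $Q_{n,2}(H)\le N\le P_n(H)$, so take $N=P_n(H)$; or, better, apply the proposition with $n$ replaced by $n+1$ and $N=P_{n+1}(H)$ (the hypotheses $Q_{n+1,2}(H)\le P_{n+1}(H)\le P_{n+1}(H)$ hold), which yields independence of the $p^{n-1}$st powers modulo $P_{n+1}(H)$ in one stroke. Contrary to the worry you record, the lemma \emph{can} be quoted directly at the exponent you need; the extra $p$th-power step is unnecessary.

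For comparison, the paper's proof is shorter and bypasses \cref{linearly independence} entirely: by \cref{isomorphism free case}, the map $\pi_{1,n-1}\colon H/P_2(H)\to P_n(H)/P_{n+1}(H)$ is a monomorphism, its image is $H^{p^{n-1}}P_{n+1}(H)/P_{n+1}(H)=Q_{n,1}(H)/P_{n+1}(H)$ by \cref{powers and comms mod Pn+2}, and $|H:P_2(H)|=p^{\,p^a+1}$ since $H$ is free of rank $p^a+1$. Injectivity of the induced power map already encodes the linear independence you are reconstructing by hand, so your route, once repaired, proves the same thing with more machinery than needed.
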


\begin{proof}
The map $\pi_{1,n-1}:H/P_2(H)\rightarrow P_n(H)/P_{n+1}(H)$ of
\cref{isomorphism free case} is a monomorphism whose image is
\[
H^{p^{n-1}}P_{n+1}(H)/P_{n+1}(H)=Q_{n,1}(H)/P_{n+1}(H).
\]
Since $|H:P_2(H)|=p^{\,p^a+1}$, the first assertion follows.
As for the second, observe that any standard series $\{S_{n,k}\}$ refines the series $\{Q_{n,k}(H)\}$,
and that $|S_{n,k}:P_{n+1}(H)|=p^k$.
\end{proof}

\begin{lem}
\label{lem:split}
Let $T$ be a normal subgroup of $F$ such that $Q_{n,1}(H) \le T \le P_{n}(H)$.
Then $(H^{p^{n-1}})^*$ is a direct summand of $T^*$ as an $\F_p[C_{p^a}]$-module.
\end{lem}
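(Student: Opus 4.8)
The plan is to produce an explicit $\F_p[C_{p^a}]$-module complement to $(H^{p^{n-1}})^*$ inside $T^*$, cut out from the Huppert decomposition of $P_n(H)/P_{n+1}(H)$. First I would pin down where $T^p[T,H]$ sits: from $Q_{n,1}(H)\le T\le P_n(H)$ one gets $Q_{n,1}(H)^p[Q_{n,1}(H),H]\le T^p[T,H]\le P_n(H)^p[P_n(H),H]$, and by \cref{special subgroups} the two ends are $U_{n+1}=Q_{n+1,2}(H)$ and $P_{n+1}(H)$, so that
\[
U_{n+1}\le T^p[T,H]\le P_{n+1}(H).
\]
Since $H^{p^{n-1}}\le Q_{n,1}(H)\le T$ and $H^{p^{n-1}}\trianglelefteq F$, the image $(H^{p^{n-1}})^*$ is an $\F_p[C_{p^a}]$-submodule of $T^*$. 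Because $(H^{p^{n-1}})^p[H^{p^{n-1}},H]\le U_{n+1}\le T^p[T,H]$, this submodule is elementary abelian and, the $p^{n-1}$st power map being a homomorphism by \cref{powers and comms mod Pn+2}, it is spanned by the images of the $p^{n-1}$st powers of the $p^a+1$ free generators of $H$. Applying \cref{linearly independence} to the free group $H$ with $n$ replaced by $n+1$ and $N=T^p[T,H]$ (legitimate since $U_{n+1}\le T^p[T,H]\le P_{n+1}(H)$), these powers are linearly independent modulo $T^p[T,H]$, whence $|(H^{p^{n-1}})^*|=p^{\,p^a+1}$.

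The crux of the argument, and the step I expect to be the main obstacle, is the identity
\[
H^{p^{n-1}}\cap T^p[T,H]=H^{p^{n-1}}\cap P_{n+1}(H),
\]
which says that the room available in $P_{n+1}(H)/T^p[T,H]$ creates no overlap between $H^{p^{n-1}}$ and $T^p[T,H]$ beyond what already lives in $P_{n+1}(H)$. The inclusion $\subseteq$ is clear from $T^p[T,H]\le P_{n+1}(H)$. For the reverse I would compare indices: by \cref{order of Qn1} the right-hand intersection satisfies $|H^{p^{n-1}}:H^{p^{n-1}}\cap P_{n+1}(H)|=|Q_{n,1}(H):P_{n+1}(H)|=p^{\,p^a+1}$, whereas the left-hand one has index $|(H^{p^{n-1}})^*|=p^{\,p^a+1}$ by the previous paragraph; two subgroups one inside the other with equal finite index must coincide.

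Next I would construct the complement. By \cref{huppert bijection} the isomorphism $f_n$ is $C$-equivariant (conjugation commutes with taking powers and preserves each $\gamma_i(H)$), so it splits $P_n(H)/P_{n+1}(H)$ as an $\F_p[C_{p^a}]$-module direct sum of the block $Q_{n,1}(H)/P_{n+1}(H)$ coming from the first factor $H/H^p\gamma_2(H)$ and the block $K/P_{n+1}(H)$ coming from the remaining factors, where $K=\gamma_2(H)^{p^{n-2}}\cdots\gamma_n(H)P_{n+1}(H)\trianglelefteq F$; in particular $Q_{n,1}(H)K=P_n(H)$ and $Q_{n,1}(H)\cap K=P_{n+1}(H)$. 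Setting $K_T=K\cap T\trianglelefteq F$ and applying the Dedekind modular law to $Q_{n,1}(H)\le T\le Q_{n,1}(H)K$ (all three containing $P_{n+1}(H)$) yields $T=Q_{n,1}(H)K_T$ with $Q_{n,1}(H)\cap K_T=P_{n+1}(H)$.

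Finally I would verify that $T^*=(H^{p^{n-1}})^*\oplus (K_T)^*$, where $(K_T)^*=K_TT^p[T,H]/T^p[T,H]$ is an $\F_p[C_{p^a}]$-submodule. Spanning is immediate: since $P_{n+1}(H)\le K_T$ we have $H^{p^{n-1}}K_T=Q_{n,1}(H)K_T=T$, so $(H^{p^{n-1}})^*(K_T)^*=T^*$. For trivial intersection, any element of $\bigl(H^{p^{n-1}}T^p[T,H]\bigr)\cap\bigl(K_TT^p[T,H]\bigr)$ maps, modulo $P_{n+1}(H)$, into $Q_{n,1}(H)/P_{n+1}(H)\cap K/P_{n+1}(H)=\overline 1$, hence lies in $P_{n+1}(H)$; combining the Dedekind law with the key identity of the second paragraph gives
\[
H^{p^{n-1}}T^p[T,H]\cap P_{n+1}(H)=\bigl(H^{p^{n-1}}\cap P_{n+1}(H)\bigr)T^p[T,H]=T^p[T,H],
\]
so the intersection is trivial in $T^*$. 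As both summands are images of $F$-normal subgroups, this is a decomposition of $\F_p[C_{p^a}]$-modules, proving that $(H^{p^{n-1}})^*$ is a direct summand of $T^*$.
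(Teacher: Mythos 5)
Your proof is correct and follows essentially the same route as the paper's: split off $Q_{n,1}(H)/P_{n+1}(H)$ inside $P_n(H)/P_{n+1}(H)$ via the Huppert isomorphism of \cref{huppert bijection}, then descend that splitting to $T^*$ by establishing the key identity $H^{p^{n-1}}\cap T^p[T,H]=H^{p^{n-1}}\cap P_{n+1}(H)$ through an order count. The only differences are cosmetic: you obtain $|(H^{p^{n-1}})^*|=p^{\,p^a+1}$ directly from \cref{linearly independence}, whereas the paper bounds $|J_n^*|$ by its number of generators and invokes \cref{order of Qn1}, and you spell out with Dedekind's law the final verification that the paper dismisses as ``it readily follows.''
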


\begin{proof}
By induction on $n$.
For $n=1$, the assumption implies that $T=H$ and the result trivially holds. 

Suppose now that $n\ge 2$.
Let us denote by $\overline{\phantom{i}}: F\to F/P_{n+1}(H)$ the natural projection.
We first prove that $\overline{H^{p^{n-1}}}$, or what is the same, $\overline{Q_{n,1}(H)}$, admits a complement in $\overline T$.
Consider the map $f_n:\mathfrak{D}_n(H)\rightarrow \overline{P_n(H)}$ of \cref{huppert bijection} corresponding to the free group $H$, which is clearly an $\F_p[C_{p^a}]$-isomorphism.
Since
\[
f_n(H/H^p\gamma_2(H) \times 1 \times \ldots \times 1)
=
\overline{H^{p^{n-1}}}
\]
and $H/H^p\gamma_2(H) \times 1 \times \ldots \times 1$ obviously admits a complement in
$\mathfrak{D}_n(H)$, it follows that $\overline{H^{p^{n-1}}}$ also admits a complement in
$\overline{P_n(H)}$.
By intersecting with $T$, we obtain a normal subgroup $R$ of $F$ such that
$\overline T=\overline{H^{p^{n-1}}} \oplus \overline R$, as desired.

Now we prove that $(H^{p^{n-1}})^*$ admits a complement in $T^*$.
Since $T\le P_n(H)$, we have $T^p[T,H]\le P_{n+1}(H)$ and there is a quotient map from
$T^*$ to $\overline T$.
If we show that the restriction $(H^{p^{n-1}})^* \rightarrow \overline{H^{p^{n-1}}}$ of this map is an isomorphism, then
\[
H^{p^{n-1}} \cap P_{n+1}(H) = H^{p^{n-1}} \cap T^p[T,H],
\]
and it readily follows that $R^*$ is a complement of $(H^{p^{n-1}})^*$ in $T^*$.
Consider the subgroup
\begin{equation}
\label{Jn}
J_n=\langle x^{p^{a+n-1}}, (y^{p^{n-1}})^{x^j} \mid j=0,\ldots,p^a-1 \rangle.
\end{equation}
By \cref{powers and comms mod Pn+2}, we have $\overline{H^{p^{n-1}}}=\overline{J_n}$.
On the other hand, since $Q_{n,1}(H)\le T$, we have
\[
U_{n+1} = Q_{n+1,2}(H) = Q_{n,1}(H)^p [Q_{n,1}(H),H] \le T^p[T,H],
\]
and so also $(H^{p^{n-1}})^*=J_n^*$, this time by \cref{pn-2 powers}.
Now observe that $J_n^*$ has order at most $p^{\,p^a+1}$, since it can be generated by $p^a+1$ elements.
Since
\begin{equation}
\label{size forbidden}
|\overline{J_n}|=|\overline{Q_{n,1}(H)}|=p^{\,p^a+1}
\end{equation}
by \cref{order of Qn1}, it follows that the natural map from $J_n^*$ to $\overline J_n$ is an isomorphism.
This completes the proof.
\end{proof}

By \cref{order of Qn1}, the subgroups $S_{n,k}$ satisfy the hypotheses of the previous result for all $n$ and for $k\ge p^a+1$.
In our next lemma we follow the steps of \cite[Corollary 3.5 and Claim~1]{J} in order to explore further the structure of the section $S_{n,k}^{\ast}=S_{n,k}/S_{n,k}^p[S_{n,k},H]$.

\begin{lem}
\label{Snk star}
For every $n\in\N$, the following holds:
\begin{enumerate}
\item
For all $k\in\{0,\ldots,s_d(n)\}$,
\[
\log_p |S_{n,k}^*| \ge p^a \log_p |F:S_{n,k}| (1+o(1)),
\]
as $n$ goes to infinity.
\item
If $k\ge p^a+1$ then $S_{n,k}^{\ast}$ can be written as a direct sum of $\F_p[C_{p^a}]$-modules,
\[
(H^{p^{n-1}})^*\oplus A_{n,k}\oplus B_{n,k},
\]
where $B_{n,k}$ is free with
\[
\rank B_{n,k} \ge p^a\log_p |F:S_{n,k}|(1+o(1))
\]
as $n$ goes to infinity.
\end{enumerate}
\end{lem}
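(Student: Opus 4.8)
The plan is to phrase everything in terms of the $\F_p[C_{p^a}]$-module $S_{n,k}^*=S_{n,k}/S_{n,k}^p[S_{n,k},H]$, following \cite[Corollary~3.5 and Claim~1]{J} but keeping track of the $C_{p^a}$-action. For (i), the first step is to locate $S_{n,k}^p[S_{n,k},H]$ inside the standard series one level up. Since $P_{n+1}(H)\le S_{n,k}$ we have $P_{n+2}(H)\le S_{n,k}^p[S_{n,k},H]$, and \cref{standard series of free}(i) gives $S_{n,k}^pP_{n+2}(H)=S_{n+1,k}$; hence $S_{n,k}^p[S_{n,k},H]=S_{n+1,k}[S_{n,k},H]$. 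As $|S_{n,k}:S_{n+1,k}|=p^{s_d(n+1)}$, this yields
\[
\log_p|S_{n,k}^*|=s_d(n+1)-\log_p\bigl|[S_{n,k},H]S_{n+1,k}:S_{n+1,k}\bigr|.
\]
Next I would bound the commutator term: by \cref{powers and comms mod Pn+2}, modulo $P_{n+2}(H)$ the subgroup $[S_{n,k},H]$ is spanned by the brackets of a basis of $S_{n,k}/P_{n+1}(H)$ with the generators of $H$, and a careful count (as in \cite[Corollary~3.5]{J}) shows its contribution is of order $p^ak$. Finally I would invoke the asymptotic $s_d(n+1)\sim(d-1)\sum_{i=1}^{n}s_d(i)$, valid with $d-1=p^a$ by the Witt formula $M_d(n)\sim d^n/n$; since $\log_p|H:S_{n,k}|=\sum_{i=1}^{n}s_d(i)-k$, combining these gives $\log_p|S_{n,k}^*|\ge p^a\log_p|H:S_{n,k}|(1+o(1))$, which is (i) because $\log_p|F:S_{n,k}|=\log_p|H:S_{n,k}|+a$ with $a$ fixed.

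For (ii) the hypothesis $k\ge p^a+1$ gives $Q_{n,1}(H)=S_{n,p^a+1}\le S_{n,k}$ by \cref{order of Qn1}, so \cref{lem:split} applies and splits off $(H^{p^{n-1}})^*$, of constant $\F_p$-dimension $p^a+1$, as a direct summand: $S_{n,k}^*=(H^{p^{n-1}})^*\oplus R^*$. I would then use that $\F_p[C_{p^a}]\cong\F_p[t]/(t^{p^a})$ is a local chain ring, so the structure theorem writes $R^*$ as a direct sum of cyclic modules $\F_p[t]/(t^{j})$ with $1\le j\le p^a$. Taking $B_{n,k}$ to be the sum of the free summands ($j=p^a$) and $A_{n,k}$ the sum of the rest produces the asserted decomposition $(H^{p^{n-1}})^*\oplus A_{n,k}\oplus B_{n,k}$.

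The crux, and the step I expect to be the main obstacle, is to prove that $\dim_{\F_p}A_{n,k}=o(\dim_{\F_p}S_{n,k}^*)$, i.e.\ that $S_{n,k}^*$ is asymptotically free over $\F_p[C_{p^a}]$. Here I would exploit that, through \cref{huppert bijection} and the standard series, $S_{n,k}^*$ is assembled from the homogeneous layers $\gamma_i(H)/\gamma_i(H)^p\gamma_{i+1}(H)$ of the free restricted Lie algebra on $H/P_2(H)\cong\F_p\oplus\F_p[C_{p^a}]$, where $x$ cyclically permutes the conjugates $y,y^x,\dots,y^{x^{p^a-1}}$. A non-free cyclic summand forces a basic commutator with non-trivial $x$-stabiliser, and the number of such $x$-symmetric basic commutators of weight $i$ is $o(M_d(i))$ by a Witt--M\"obius count; summing over the relevant layers shows the non-free part is a vanishing fraction of $\dim_{\F_p}S_{n,k}^*$, which is the transport of \cite[Claim~1]{J} to the present situation. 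Granting this, $\dim_{\F_p}B_{n,k}=\dim_{\F_p}S_{n,k}^*\,(1+o(1))$, and combining with (i) yields $\rank B_{n,k}\ge p^a\log_p|F:S_{n,k}|(1+o(1))$. The delicate point is that all the estimates must be uniform in $k$, so that the bounded number of exceptional layers at the ends of each $\{Q_{n,\ell}(H)\}$-step does not affect the asymptotics.
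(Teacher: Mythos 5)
Your proof of (i) takes a genuinely different route from the paper's, and the divergence is exactly where the difficulty sits. Your reduction $S_{n,k}^p[S_{n,k},H]=S_{n+1,k}[S_{n,k},H]$ and the identity $\log_p |S_{n,k}^*|=s_d(n+1)-\log_p\bigl|[S_{n,k},H]S_{n+1,k}:S_{n+1,k}\bigr|$ are correct (the $p$th powers are accounted for exactly by \cref{index does not grow}). But everything then hinges on the asserted bound ``of order $p^ak$'' for the commutator term, and the count you actually describe does not deliver it: the brackets of a $k$-element basis of $S_{n,k}/P_{n+1}(H)$ with the $p^a+1$ free generators of $H$ give only $\log_p\bigl|[S_{n,k},H]S_{n+1,k}:S_{n+1,k}\bigr|\le (p^a+1)k$. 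That extra $+k$ is fatal: for $k=s_d(n)$ (i.e.\ $S_{n,k}=P_n(H)$) the resulting lower bound $s_d(n+1)-(p^a+1)s_d(n)$ is \emph{negative} for large $n$, whereas the true value is $s_d(n)$. The reduction of the per-generator cost from $p^a+1$ to $p^a$ is precisely what the paper extracts from \cref{standard series of free}(iii) together with the step-by-step descent $|S_{n,k}^*|\le p^{p^a}\,|S_{n,k+1}^*|$ of \cite[Lemma 2.1]{J}, valid only once $k$ exceeds the rank of $H$; this is the one new structural input from Section 2 that your argument never invokes, so the crucial estimate is left unproved.

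For (ii), the first half of your argument (splitting off $(H^{p^{n-1}})^*$ via \cref{lem:split}, which needs \cref{order of Qn1} and $k\ge p^a+1$, then decomposing the complement into cyclic $\F_p[t]/(t^j)$'s and collecting the free ones) is exactly the paper's. The lower bound on $\rank B_{n,k}$ is where you depart: the paper gets a large free direct summand by rerunning \cite[Claim~1]{J} (which combines the dimension bound of (i) with a bound on the number of \emph{module generators} of $S_{n,k}^*$) and then compares summands via Krull--Schmidt, whereas you propose to prove that $S_{n,k}^*$ is asymptotically free by counting $x$-symmetric basic commutators. Two genuine gaps there. First, $S_{n,k}^*=S_{n,k}/S_{n,k}^p[S_{n,k},H]$ is not a sum of homogeneous layers of the free Lie algebra: it depends on how the intermediate submodule $S_{n,k}/P_{n+1}(H)$ sits inside its layer, and submodules and quotients of almost-free $\F_p[C_{p^a}]$-modules need not be almost free, so the transfer you sketch is exactly the missing argument (as you yourself flag). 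Second, the final inference is off by a factor of $p^a$: from $\dim_{\F_p}B_{n,k}=\dim_{\F_p}S_{n,k}^*\,(1+o(1))$ and (i) you get $\rank B_{n,k}=\dim_{\F_p}B_{n,k}/p^a\ge \log_p|F:S_{n,k}|(1+o(1))$, not $p^a\log_p|F:S_{n,k}|(1+o(1))$. (In fact, since $p^a\rank B_{n,k}\le\dim_{\F_p}S_{n,k}^*\le s_d(n+1)$, the bound with the extra factor $p^a$ cannot hold for $k$ close to $p^a+1$; the version without it is what the counting in the proof of the Main Theorem actually uses, so your non sequitur happens to reproduce a slip in the statement rather than prove it.)
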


\begin{proof}
(i)
Set $d=p^a$.
By \cref{standard series of free}, $S_{n,k+1}^p\le S_{n,k}^p[S_{n,k},F]$ for $k\ge d$.
Arguing as in \cite[Lemma 2.1]{J}, we get $|S_{n,k}^*|\le p^{d-1}|S_{n,k+1}^*|$ for $k\ge d$,
while $|S_{n,k}^*|\le p^d \, |S_{n,k+1}^*|$ for $k=0,\ldots,d-1$.
Since $P_{n+1}(H)=S_{n,0}$, it readily follows that
\[
|P_{n+1}(H)^*|\le p^{(d-1)k+d} \, |S_{n,k}^*|,
\quad
\text{for all $k=0,\ldots,s_d(n)$,} 
\]
i.e.\ that
\begin{equation}
\label{bound Snk star}
\log_p |S_{n,k}^*| \ge \log_p |P_{n+1}(H)^*| - (d-1)k - d.
\end{equation}
Now, by \cite[Lemma 3.2]{J}, we have
\[
\log_p |P_{n+1}(H)^*|=(d-1)\log_p |F:P_{n+1}(H)|(1+\alpha(n)),
\]
with $\alpha(n)=o(1)$ as $n$ goes to infinity.
If $\beta(n)=\min\{\alpha(n),0\}$ for all $n\in\N$ then also $\beta(n)=o(1)$ and
\[
\log_p |P_{n+1}(H)^*|\ge (d-1)\log_p |F:P_{n+1}(H)|(1+\beta(n)).
\]
If we combine this with \eqref{bound Snk star}, taking into account that $\log_p |S_{n,k}:P_{n+1}(H)|=k$, we obtain that
\[
\log_p |S_{n,k}^*|
\ge
(d-1)\log_p |F:S_{n,k}|(1+\beta(n)) + (d-1)k \beta(n) - d.
\]
Since, by \eqref{Pn/Pn+1 versus F/Pn},
\[
k \le \log_p |P_n(H):P_{n+1}(H)| \le d \log_p |H:P_n(H)| \le d \log_p |F:S_{n,k}|,
\]
and since $\beta(n)\le 0$, we get
\[
\log_p |S_{n,k}^*|
\ge
(d-1)\log_p |F:S_{n,k}|(1+(d+1)\beta(n)) - d,
\]
and the result follows.

(ii)
Since (i) holds, the proof of Claim 1 of \cite{J} applies to get a direct sum of $\F_p[C_{p^a}]$-modules
\begin{equation}
\label{free summand}
S_{n,k}^*=C_{n,k}\oplus D_{n,k},
\end{equation}
where $D_{n,k}$ is a maximal free submodule and
\[
\rank D_{n,k} \ge p^a\log_p |F:S_{n,k}|(1+o(1)).
\]
On the other hand, by \cref{lem:split}, we have $S_{n,k}^*=(H^{p^{n-1}})^*\oplus X_{n,k}$ for some
$\F_p[C_{p^a}]$-module $X_{n,k}$.
As seen in the proof of \cref{lem:split}, we have $(H^{p^{n-1}})^*=J_n^*$, where $J_n$ is given by
\eqref{Jn}, and $|(H^{p^{n-1}})^*|=p^{\,p^a+1}$.
It follows that
\[
(H^{p^{n-1}})^* = \langle (x^{p^{a+n-1}}) \rangle^* \oplus
\langle (y^{p^{n-1}})^{x^j} \mid j=0,\ldots,p^a-1 \rangle^*
\]
is a sum of a trivial $\F_p[C_{p^a}]$-module and a free $\F_p[C_{p^a}]$-module.
Now if we write $X_{n,k}$ as a sum of indecomposable modules and group together the modules which are isomorphic to $\F_p[C_{p^a}]$, we get a free submodule $B_{n,k}$ such that
$X_{n,k}=Y_{n,k}\oplus B_{n,k}$.
Hence
\[
S_{n,k}^*= \langle (x^{p^{a+n-1}}) \rangle^* \oplus
\langle (y^{p^{n-1}})^{x^j} \mid j=0,\ldots,p^a-1 \rangle^*\oplus Y_{n,k}\oplus  B_{n,k}.
\]
Comparing this with \eqref{free summand} and applying the Krull-Schmidt theorem to the finite module
$S_{n,k}^*$, it follows that $\rank B_{n,k}=\rank D_{n,k}-1$, and we are done.
\end{proof}

We also recall the following result from \cite{J}.

\begin{lem}{\cite[Claim~2]{J}}
\label{lem:countsubmod}
Let $M$ be a free $\F_p[C_{p^a}]$-module of rank $m$.
Then for every $s\in\{0,\ldots,m\}$, there are at least $p^{s(m-s)p^a}$ submodules $N$ of $M$ such that $M/N$ is a free $\F_p[C_{p^a}]$-module of rank $s$.
\end{lem}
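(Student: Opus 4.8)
The plan is to count directly, exploiting that $A:=\F_p[C_{p^a}]$ is a local ring. Since $C_{p^a}$ is a $p$-group, in characteristic $p$ we have $A\cong\F_p[t]/(t^{p^a}-1)=\F_p[t]/((t-1)^{p^a})$, which is local with maximal ideal $J=\mathrm{rad}(A)$, residue field $A/J\cong\F_p$, and $\dim_{\F_p}A=p^a$, so that $|A|=p^{p^a}$ and $|J|=p^{p^a-1}$. Since $M\cong A^m$ is free, a submodule $N$ with $M/N$ free of rank $s$ is exactly the kernel of a surjective homomorphism $\phi\colon A^m\to A^s$, and two such surjections have the same kernel precisely when they differ by an automorphism of $A^s$. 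Thus the number of $N$ we wish to count equals the number of surjective homomorphisms $A^m\to A^s$ divided by $|\Aut(A^s)|=|\mathrm{GL}_s(A)|$, and the whole argument reduces to evaluating these two quantities.

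First I would count the surjections. Writing $\phi$ as an $s\times m$ matrix over $A$, Nakayama's lemma shows that $\phi$ is surjective if and only if its reduction $\overline\phi$ modulo $J$, an $s\times m$ matrix over $\F_p$, has rank $s$. The entrywise reduction map from $s\times m$ matrices over $A$ to those over $\F_p$ has all fibres of the same size $|J|^{sm}=p^{sm(p^a-1)}$, while the number of rank-$s$ matrices over $\F_p$, that is, of surjective maps $\F_p^m\to\F_p^s$, is $(p^m-1)(p^m-p)\cdots(p^m-p^{s-1})$. Hence the number of surjections $A^m\to A^s$ is the product of these two numbers.

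By exactly the same Nakayama argument, an $s\times s$ matrix over $A$ is invertible if and only if its reduction over $\F_p$ is, so $|\mathrm{GL}_s(A)|=|\mathrm{GL}_s(\F_p)|\cdot p^{s^2(p^a-1)}$. Dividing, the powers of $p$ coming from $J$ contribute $p^{(sm-s^2)(p^a-1)}=p^{s(m-s)(p^a-1)}$, while the remaining factor $(p^m-1)\cdots(p^m-p^{s-1})/|\mathrm{GL}_s(\F_p)|$ is precisely the Gaussian binomial coefficient $\binom{m}{s}_p$, the number of codimension-$s$ subspaces of $\F_p^m$. Therefore the number of submodules $N$ with $M/N$ free of rank $s$ equals $\binom{m}{s}_p\, p^{s(m-s)(p^a-1)}$. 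Since $\binom{m}{s}_p$ is a monic polynomial in $p$ of degree $s(m-s)$ with nonnegative integer coefficients, it is at least $p^{s(m-s)}$, and multiplying the two powers of $p$ yields the claimed bound $p^{s(m-s)p^a}$.

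The computation is clean, so the only real care is needed in the two structural reductions via Nakayama's lemma, namely that surjectivity and invertibility of matrices over $A$ are detected modulo the radical, and in the bijection between the submodules $N$ and surjections taken modulo $\mathrm{GL}_s(A)$. Once these are in place, the rest is routine bookkeeping of orders over a local ring, and obtaining the exact count rather than a mere estimate is a pleasant by-product that makes the final inequality immediate.
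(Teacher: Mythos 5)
Your proof is correct, and there is nothing in the paper to compare it against: the lemma is quoted verbatim from Claim~2 of Jaikin-Zapirain's paper \cite{J} and is not proved here. The most economical route to the stated lower bound (and, essentially, the one in \cite{J}) is simply to exhibit $p^{s(m-s)p^a}$ pairwise distinct submodules: fixing a basis $e_1,\dots,e_m$ of $M$ and writing $A=\F_p[C_{p^a}]$, each $(m-s)\times s$ matrix $(a_{ij})$ over $A$ yields the submodule generated by the elements $e_{s+i}+\sum_{j}a_{ij}e_j$, distinct matrices give distinct submodules with free quotient of rank $s$, and there are $|A|^{s(m-s)}=p^{s(m-s)p^a}$ of them. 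Your route is genuinely different and strictly stronger: by identifying the submodules in question with $\Aut(A^s)$-orbits of surjections $A^m\to A^s$ and exploiting that $A$ is a local ring with radical $J$ of index $p$ and $|J|=p^{p^a-1}$, you obtain the exact count $\binom{m}{s}_p\,p^{s(m-s)(p^a-1)}$, from which the claimed inequality follows because the Gaussian binomial coefficient is a monic polynomial of degree $s(m-s)$ in $p$ with nonnegative coefficients. All the individual steps check out: the orbit--kernel bijection (the $\Aut(A^s)$-action on surjections is free, so one may simply divide), the two applications of Nakayama's lemma showing that surjectivity and invertibility of matrices over $A$ are detected modulo $J$, and the final bookkeeping, including the degenerate cases $s=0$ and $s=m$. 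The price of your approach is a little more ring theory, but the reward is an exact formula rather than a bound, which is more than the application in Section~4 needs.
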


We are finally ready for the proof of the Main Theorem.

\begin{thm} 
The number $\bea(p^t)$ of Beauville groups of order $p^t$ and the number $\nbea(p^t)$ of non-Beauville $2$-generator groups of order $p^t$ both grow at least as
$p^{t^2/4 + o(t^2)}$ as $t$ goes to infinity.
\end{thm}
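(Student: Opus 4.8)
The plan is to use the machinery developed in the preceding sections to produce, for a carefully chosen value of the parameter $a$ and a suitable standard series $\mathcal{S}=\{S_{n,k}\}$ of $H$, a large number of pairwise non-isomorphic standard groups $F/W$, arranging the intersection $W\cap V_n$ so that \cref{thm:beauville iff} tells us \emph{in advance} whether each $F/W$ is Beauville or not. First I would fix $n$ large and choose $k$ with $k\ge p^a+1$, so that \cref{Snk star}(ii) applies and we have a direct sum decomposition $S_{n,k}^*=(H^{p^{n-1}})^*\oplus A_{n,k}\oplus B_{n,k}$ of $\F_p[C_{p^a}]$-modules, with $B_{n,k}$ free of rank $m:=\rank B_{n,k}\ge p^a\log_p|F:S_{n,k}|(1+o(1))$. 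The normal subgroups $W$ we count will all contain $S_{n,k}^p[S_{n,k},H]$ and be obtained by pulling back submodules $N\le B_{n,k}$ under the projection $\phantom{i}^*$, keeping the summands $(H^{p^{n-1}})^*$ and $A_{n,k}$ fixed. Because $V_n\le U_{n+1}H^{p^{n-1}}$ and $V_n/U_{n+1}$ sits inside the image of $(H^{p^{n-1}})^*$ (by \cref{Vn}), varying $W$ only within the free summand $B_{n,k}$ leaves $W\cap V_n$ constant; this is precisely how we guarantee a uniform Beauville (or non-Beauville) verdict across the whole family.

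Next I would invoke \cref{lem:countsubmod}: taking $s=\lfloor m/2\rfloor$, there are at least $p^{s(m-s)p^a}=p^{m^2p^a/4+o(m^2)}$ submodules $N\le B_{n,k}$ with free quotient of rank $s$. Each such $N$ gives a distinct $W$, hence a standard group $F/W$ of a fixed order $p^t$, where $t=\log_p|F:W|$ can be read off from $\log_p|F:S_{n,k}|$ together with $\rank N$. The key point is a standard isomorphism-class bound: the number of standard groups of order $p^t$ isomorphic to a given one is bounded by $|\Aut(F/W)|$ composed with the number of relevant automorphisms permuting the series, which is at most polynomial (indeed $p^{O(t)}$, absorbed into the $o(t^2)$ error), exactly as in Jaikin-Zapirain's counting argument in \cite{J}. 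Dividing the count $p^{m^2p^a/4+o(m^2)}$ of subgroups by this isomorphism overcount still leaves $p^{m^2p^a/4+o(m^2)}$ pairwise non-isomorphic groups.

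It remains to optimise the parameters to reach the exponent $t^2/4$. Writing $t=\log_p|F:W|$, we have $t\approx \log_p|F:S_{n,k}|+s$, while $m\approx p^a\log_p|F:S_{n,k}|$ and $s\approx m/2$; one then chooses $a$ growing slowly with $t$ (so that $p^a=o(t)$ but $p^a\to\infty$, making the $(1+o(1))$ factors genuinely negligible) and balances $k$ so that the contribution $s$ to $t$ is of the same order as $\log_p|F:S_{n,k}|$. A direct computation of the quadratic $\frac{m^2 p^a}{4}$ in terms of $t$, carried out exactly as in the derivation of the lower bound in \eqref{jaikin bounds}, yields $\frac14 t^2+o(t^2)$.

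The control over $W\cap V_n$ is what separates this proof from Jaikin-Zapirain's: I must split the construction into two parallel families. For the \emph{Beauville} count I arrange $V_n\le W$ (equivalently $W\cap V_n=V_n$), which is achieved by keeping the relevant generator $z^{p^{n-1}}$-direction inside the fixed summand $(H^{p^{n-1}})^*$ that is always contained in $W$; for the \emph{non-Beauville} count I instead force $W\cap V_n$ to be a subgroup strictly between $U_{n+1}$ and $V_n$, which \cref{thm:beauville iff} certifies as non-Beauville. In both cases the freedom used for counting lives entirely in $B_{n,k}$, disjoint from $V_n$, so the same asymptotic subgroup count $p^{m^2p^a/4+o(m^2)}$ applies verbatim to each family. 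The main obstacle I anticipate is precisely this bookkeeping: verifying that one can simultaneously (a) pin down $W\cap V_n$ to the desired value, (b) retain the full free summand $B_{n,k}$ for counting, and (c) keep the isomorphism-class overcounting within $p^{o(t^2)}$, all while the parameter $a$ drifts to infinity. Once these three constraints are shown to be compatible, both asymptotic lower bounds $\bea(p^t)\ge p^{t^2/4+o(t^2)}$ and $\nbea(p^t)\ge p^{t^2/4+o(t^2)}$ follow at once.
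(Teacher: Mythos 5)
Your overall strategy is the same as the paper's: work with standard groups $F/W$ for $W$ between $S_{n,k}^p[S_{n,k},H]$ and $S_{n,k}$ with $k\ge p^a+1$, use the decomposition $S_{n,k}^*=(H^{p^{n-1}})^*\oplus A_{n,k}\oplus B_{n,k}$ of \cref{Snk star} to pin down $W\cap V_n$ (all of $(H^{p^{n-1}})^*$ inside $W^*$ for the Beauville family, only the $x^{p^{a+n-1}}$-direction for the non-Beauville one), count submodules of the free summand $B_{n,k}$ via \cref{lem:countsubmod}, divide by a $p^{O(t)}$ isomorphism overcount, and let $a$ go to infinity. All of this matches the paper, and your use of \cref{Vn} to argue that varying $W$ only inside $B_{n,k}$ does not move $W\cap V_n$ is exactly how the paper certifies the Beauville/non-Beauville verdict via \cref{thm:beauville iff}.

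The genuine problem is in the optimization of parameters. You set $s=\lfloor m/2\rfloor$ so as to maximize $s(m-s)$, and you write $t\approx\log_p|F:S_{n,k}|+s$. But if $B_{n,k}/C$ is free of rank $s$ over $\F_p[C_{p^a}]$ then $|B_{n,k}:C|=p^{sp^a}$, so the correct relation is $t=\log_p|F:T|+sp^a$ up to $O(1)$. Moreover the module rank $m$ of the free part is comparable to $\log_p|F:T|$ (each free generator contributes $p^a$ to $\log_p|T^*|\approx p^a\log_p|F:T|$), so fixing the order $p^t$ couples $s$ and $m$: writing $u=\log_p|F:T|$ and $v=sp^a$, one must maximize $s(m-s)p^a\approx v(u-v/p^a)$ subject to $u+v=t$. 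The maximum is attained at $v\approx u\approx t/2$, i.e.\ $s\approx m/p^a$, giving $\tfrac{p^a-1}{4p^a}t^2$, which is the paper's computation. Your choice $s=\lfloor m/2\rfloor$ instead forces $t\approx u(1+p^a/2)$, hence $u\approx 2t/p^a$ and an exponent of order only $t^2/p^a$ --- and this tends to $0$ relative to $t^2$ precisely in the regime $p^a\to\infty$ that you need in order to kill the $(p^a-1)/p^a$ factor. So as written the exponent $t^2/4$ is not reached; you must replace $s=\lfloor m/2\rfloor$ by the balanced choice $sp^a\approx\log_p|F:T|\approx t/2$. The remaining points (adjusting $T$ within the standard series so that the order is exactly $p^t$, and bounding the overcount by the number of homomorphisms $F\to P$, which is $p^{2t}$, rather than by $|\Aut(F/W)|$) are minor bookkeeping.
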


\begin{proof}
Let $\mathcal{S}$ be a standard series of $H$ consisting of normal subgroups of $F$.
The idea of the proof is to consider factor groups of the form $F/W$, where $T^p[T,H]\le W\le T$ for some subgroup $T$ in the series $\mathcal{S}$.
By suitably choosing $T$, we will be able to apply \cref{thm:beauville iff} and so to determine whether the quotients $F/W$ are Beauville or not.
Then a suitable choice of the subgroups $W$, together with the previous lemmas, will ensure that we get as many groups of each type as desired.

Let us make these ideas more precise.
Set $d=p^a+1$ and, for every $n\ge 1$, let
\[
\mathcal{H}(n)
=
\{ S_{n,k} \mid k=d,\ldots,s_d(n) \}.
\]
Observe that $S_{n,0}=S_{n+1,s_d(n+1)}$ belongs to $\mathcal{H}(n+1)$.
If $T\in\mathcal{H}(n)$ then $Q_{n,1}(H)\le T\le P_n(H)$, by \cref{order of Qn1}.
Consequently, if $W$ is a normal subgroup of $F$ lying between $T^p[T,H]$ and $T$, we have
\begin{multline*}
U_{n+1}=Q_{n+1,2}(H)=Q_{n,1}(H)^p [Q_{n,1}(H),H]
\\
\le T^p[T,H] \le W \le T \le P_n(H).
\end{multline*}
As a consequence, \cref{thm:beauville iff} allows us to determine whether the quotient $F/W$ is a Beauville group or not.

Thus we are going to work with submodules of the $\F_p[C_{p^a}]$-module $T^\ast=T/T^p[T,H]$.
By \cref{Snk star}, $T^\ast=(H^{p^{n-1}})^*\oplus A\oplus B$ decomposes as a direct sum of submodules, where $B$ is free, of rank $m$, say.
Now consider a submodule $C$ of $B$ such that the quotient $B/C$ is free of rank $s$, where
$s\in\{0,\ldots,m\}$ is arbitrary for the moment.
Let $M$ and $N$ be the inverse images in $T$ of $A$ and $C$, respectively.
Then, by \cref{thm:beauville iff}, the following hold:
\begin{enumerate}
\item
If $W=H^{p^{n-1}}MN$ then $V_n\le H^{p^{n-1}}U_{n+1}\le W$, and $F/W$ is a Beauville group.
\item
If $W=\langle x^{p^{a+n-1}}\rangle MN$ then
$x^{p^{a+n-1}}\in (V_n\cap W)\smallsetminus U_{n+1}$ and
\[
(V_n)^*\cap W^* \subseteq (H^{p^{n-1}})^*\cap W^* = \langle x^{p^{a+n-1}} \rangle^* \subsetneq
(V_n)^*.
\]
This implies that $U_{n+1}\subsetneq V_n\cap W\subsetneq V_n$, and $F/W$ is not a Beauville group.
\end{enumerate}
Since $W$ as above is in any case of the form $KN$ with $K\le H^{p^{n-1}}M$ fixed and $N$ variable, and since $T^*=(H^{p^{n-1}})^*\oplus A\oplus B$, we get as many Beauville or non-Beauville groups as choices there are for $C$.

In order to obtain groups of order exactly $p^t$, we now choose $T$ and $s$ suitably.
We may assume that $t\ge 2p^a$, and then we set $r=\lfloor t/2p^a \rfloor$.

We first consider the case when we want to get Beauville groups.
Then $t-rp^a\ge p^a$ and there exist unique $n,k\in\N$ such that $|F:S_{n,k}|=p^{t-rp^a}$.
If $S_{n,k}\in\mathcal{H}(n)$ then we choose $T=S_{n,k}$.
Otherwise $k\in\{1,\ldots,d-1\}$, and then we choose $T$ to be the subgroup of the series $\mathcal{S}$ that has index $p^a$ in $S_{n,k}$.
In other words,
\[
T = S_{n+1,s_d(n+1)-d+k+1}.
\]
Observe that $T\in\mathcal{H}(n+1)$ in this case, since
\[
s_d(n+1)-d+k+1\ge s_d(2)-d+2 = \frac{d^2+d}{2}-d+2 \ge d.
\]
Thus $T$ is either in $\mathcal{H}(n)$ or $\mathcal{H}(n+1)$, and $|F:T|=p^{t-sp^a}$, where $s=r$ or $r-1$.
In any case, we have $s=t/2p^a+O(1)$.

Since in this case we set $W=H^{p^{n-1}}MN$ and
\[
|T:W|=|T^*:W^*|=|B:C|=p^{sp^a},
\]
it follows that $F/W$ is of order $p^t$.
Let $\bq(p^t)$ be the number of Beauville quotients of order $p^t$ that we are obtaining this way, i.e.\ the number of ways of choosing $C$ inside $B$ with $B/C$ free of rank $s$.
By \cref{lem:countsubmod}, we have
\[
\log_p \bq(p^t) \ge  s(m-s)p^a,
\]
and by \cref{Snk star},
\[
m \ge p^a(t-sp^a)(1+o(1)).
\]
It follows that
\begin{align*}
\log_p \bq(p^t)
&\ge 
\Big( \frac{t}{2} + O(1) \Big) \Big( \Big( \frac{t}{2} + O(1) \Big)(1+o(1)) - \frac{t}{2p^a} + O(1) \Big)
\\
&=
\Big( \frac{t}{2} + O(1) \Big) \Big( \frac{p^a-1}{2p^a}t + o(t) \Big)
\\
&=
 \frac{p^a - 1}{4p^a} t^2 +o(t^2).
\end{align*}
Since $a$ can be made as big as we want, this implies that
\begin{equation}
\label{bound bq}
\log_p \bq(p^t) \ge \frac{1}{4} t^2 +o(t^2).
\end{equation}

Now, in order to bound $\bea(p^t)$, we have to consider how many quotients $F/W$ can be isomorphic to a given one, say $P$.
This is at most the number of homomorphisms from $F$ to $P$, which since $|P|=p^t$ and $F$ has $2$ generators, is $p^{2t}$.
Thus
\[
\bea(p^t) \ge \bq(p^t)/p^{2t}
\]
and we conclude from \eqref{bound bq} that
\[
\log_p \bea(p^t) \ge \frac{1}{4} t^2 +o(t^2),
\]
as desired.

Now let us briefly explain the changes that need to be done for the counting of non-Beauville groups.
As before, we choose $S_{n,k}$ such that $|F:S_{n,k}|=p^{t-rp^a}$ and we let $T$ be either $S_{n,k}$ or the subgroup in the series $\mathcal{S}$ of index $p^a$ in $S_{n,k}$, in order to make sure that $T$ belongs to either $\mathcal{H}(n)$ or $\mathcal{H}(n+1)$.
Then we have $|F:T|=p^{t-(s+1)p^a}$ with $s=r-1$ or $r-2$.
Again, $s=t/2+O(1)$.

Recall that in order to get non-Beauville groups we set
$W=\langle x^{p^{a+n-1}} \rangle MN$.
Then
\[
|T:W|=|T^*:W^*|=|(H^{p^{n-1}})^*:\langle x^{p^{a+n-1}} \rangle^*| \cdot |B:C| =p^{(s+1)p^a},
\]
and it follows that $F/W$ is of order $p^t$.
The rest of the proof is exactly the same as above.
\end{proof}

\end{document}